\documentclass{article}
\usepackage{babel}
\usepackage[utf8]{inputenc}
\usepackage[T1]{fontenc}
\usepackage[top=2.5cm, bottom=2.5cm, left=2.5cm, right=2.5cm]{geometry} 
\usepackage{verbatim}
\usepackage{amsmath}
\usepackage{amsthm}
\usepackage{amsfonts}
\usepackage{mathabx}
\usepackage{color}
\usepackage{xcolor}
\usepackage{amssymb}
\usepackage{mathrsfs}
\usepackage{subfigure}
\usepackage{float}
\usepackage[breaklinks=true]{hyperref}
\usepackage{xspace}
\usepackage{enumitem}
\usepackage{makecell}
\usepackage{algorithm}
\usepackage{algpseudocode}
\usepackage[onehalfspacing]{setspace}

\renewcommand{\H}{\mathcal{H}}

\newcommand{\syst}[1]{\left \{ \begin{array}{l} #1 \end{array} \right. \kern-\nulldelimiterspace}	

\newcommand{\argmin}{\text{\normalfont argmin}}

\newcommand{\dom}{\text{\normalfont dom}\,}

\renewcommand{\int}{\text{\normalfont int}\,}

\makeatletter
\newlength{\algorithmboxrule}
\makeatother

\newtheorem{proposition}{Proposition}
\newtheorem{corollary}{Corollary}
\newtheorem{lemma}{Lemma}
\newtheorem{theorem}{Theorem}
\newtheorem{definition}{Definition}
\newtheorem{remark}{Remark}
\newtheorem{example}{Example}

\def\qed{\hbox to 0pt{}\hfill$\rlap{$\sqcap$}\sqcup$}
\makeatother

\title{Proximity operator characterization for abstract convex functions}
\author{ Ewa Bednarczuk\thanks{Warsaw University of Technology,  00-662 Warsaw, Koszykowa 75, Poland} \thanks{Systems Research Institute, PAS,  01-447 Warsaw, Newelska 6, Poland}
\and
The Hung Tran\footnotemark[2]}
\date{}

\usepackage{graphicx}

\begin{document}

\maketitle
\begin{center}
In honour of Professor Phan Quoc Khanh
\end{center}
\begin{abstract}
We consider proximity operator as a selector of the subgradient in the context of abstract convexity and characterize its properties in term of minimization problems. We also investigate the continuity and differentiability of proximity operator in the case of paraconvex and weakly convex functions. Supporting examples are given in each section.

\end{abstract}
\section{Introduction}
Proximal operator was introduced by Moreau \cite{moreau1965proximite,moreau1970} and became one of the most important concept in modern optimization and applications. 
Since then, numerous works have been devoted to theoretical and numerical aspects of proximal operator \cite{jourani2014differential,bauschke2018regularizing,chambolle2011first}. The strength of proximity operator lies in its nonexpansivity and the differentiability of the corresponding Moreau envelope. 
Its characterizations have been discussed in  \cite{gribonval2020characterization}. 
Recently, proximal operator and Moreau envelope proved their efficiency when applying to weakly convex functions \cite{hoheisel2010proximal,davis2019}. 
Other directions of research including new type of proximity operator such as Bregman proximal operator \cite{kiwiel1997proximal} or proximity operator for Legendre type functions \cite{laude2023anisotropic}, higher order Moreau envelope \cite{kabgani2025moreau}.

The starting point of our analysis comes from Moreau \cite[Corollary 10.c]{moreau1965proximite} which states a single-valued operator $T$ acting on Hilbert space $H$ is a proximity operator if and only if $T$ is nonexpansive on $H$ and there exists a convex function $\psi:H\to (-\infty,+\infty]$ such that $T(x)\in \partial \psi (x)$ for any $x\in H$. Moreover, $T(x)$ is a unique solution of the minimization problem $\min_{z\in H} \varphi(z)+\frac{1}{2}\Vert z-x\Vert^2$ where $\varphi\in \Gamma_0(H)$. This result has been generalized in \cite{gribonval2020characterization} by admitting penalty terms other than $\frac{1}{2}\Vert z-\cdot\Vert^2$.

In the present paper, we further generalize the results of \cite{gribonval2020characterization}, by admitting nonconvex $\varphi$ which satisfy abstract convexity conditions, the term coined by Rubinov \cite{Rub2013}, and Pallaschke and Rolewicz \cite{Pall2013}. 
We show that our results apply to the concept of proximity operator, namely  $\Phi_{lsc}^\mathbb{R}$-proximity operator (Corollary \ref{cor: lsc prox relation}) where $\Phi_{lsc}^\mathbb{R}$ is the class of quadratic functions \cite{bednarczuk2025proximal}. Furthermore, we specify Theorem \ref{thm: prox relation general} for weakly convex functions and paraconvex functions by using proximal subdifferentials (Corollaries \ref{cor: lsc prox relation general} and \ref{cor: paraconvex prox relation general}). 
We also show that the continuity of the proximity operator is related to the differentiability of the functions $\varphi$ and $\psi$ (see Corollary \ref{cor: smooth prox paraconvex}) through the continuous selection of suitably chosen subdifferentials. 

The organization of the paper is as follows: we present the basic definitions and notation in Section \ref{sec: basic def and note}. Section \ref{sec: prox main result} contains our main results on the proximity operator within abstract convexity setting (Theorem \ref{thm: prox relation general}). 
We discuss the continuity of the proximity operator in section \ref{sec: smooth prox}. Section \ref{sec: supporting proof} contains auxiliary results and some relevant properties of paraconvex and weakly convex functions. We close the paper with the conclusion.

\section{Preliminaries and Notation}
\label{sec: basic def and note}

Let $H$ be a Hilbert space and let $G$ be a linear space. For a function $\varphi:H \to (-\infty,+\infty]$, $\mathrm{dom}\ \varphi:=\{ x\in H: \varphi(x) <+\infty\}$ and $\mathrm{Im}\ \varphi :=\{y\in \mathbb{R}: \exists x\in H, f(x)=y\}$ are the domain and range of $\varphi$, respectively.
A function is $\varphi$ is proper if $\mathrm{dom }\varphi\neq\emptyset$.
For a set-valued operator $A: H \rightrightarrows G$, its domain and range are respectively defined as
\begin{equation}
    \dom A = \left\{ x\in H: Ax \neq \emptyset\right\},\quad \text{ran } A = \left\{ Ax: x\in H\right\}.
\end{equation}
We define its inverse $A^{-1}:G\rightrightarrows H, A^{-1}(g) = \{ x\in H: g\in A(x)\}$ for $g\in G$. It holds $\dom A^{-1} = \text{ran } A$.

Let $\Phi$ be the collection of functions $\Phi:=\{\phi:H\to \mathbb{R}\}\subset G$ which is closed under addition of constant and we set $\langle \phi, x\rangle_\Phi = \phi(x)$ for $x\in H,\phi \in \Phi$.
For instance, the class $\Phi$ can be taken as the class of affine functions $\Phi_{\mathrm{affine}}=\{\phi=(u,c)\in H\times \mathbb{R}: \phi(x) =\langle u,x\rangle+c, x\in H\}$ or quadratic $\Phi_{lsc}^\mathbb{R}=\{\phi=(a,u,c)\in \mathbb{R}\times H\times \mathbb{R}: \phi(x) = -a\Vert x\Vert^2+\langle u,x\rangle+c\}$. Clearly, $\Phi_{lsc}^\mathbb{R}$ is a Hilbert space as a Cartesian product of Hilbert spaces.

We introduce the notion of $\Phi$-convexity.
\begin{definition}\cite{Pall2013,Rub2013}
\label{def:Phi-convex}
For a given $x\in H$, a function $\varphi:H\to\left(-\infty,+\infty\right]$ is said to be $\Phi$-convex at $x$
if and only if 
\[
\varphi\left(x\right)= \varphi^\Phi (x),
\]
where $\varphi^\Phi (x) = \sup_{\phi\in\Phi,\phi\leq \varphi} \phi\left(x\right)$ is the $\Phi$-convexification of $\varphi$ at $x\in H$. If the above relation holds for any $x\in H$, we say that $\varphi$ is $\Phi$-convex on $H$.
\end{definition}
Observe that the above definition is independent of the structure of $H$, which means that Definition remains meaningful if instead of Hilbert space $H$, we consider functions $\phi$ define on a set $X$, see also \cite[Page 18]{Pall2013}.
Together with abstract convex functions, we define paraconvex functions.
\begin{definition}
\label{def:paraconvex}
Let $2\geq\gamma>1$, we say that $\varphi:H\to (-\infty,+\infty]$ is $\gamma$-paraconvex if there exists a constant $C>0$ such that for any $x,y\in H,\lambda\in [0,1]$, we have
\[
\varphi\left(\lambda x+ (1-\lambda)y \right) \leq \lambda \varphi(x) +(1-\lambda) \varphi (y) +C\lambda(1-\lambda)\Vert x-y\Vert^\gamma.
\]
\end{definition}
When $\gamma=2$, we recover the definition of weakly convex functions, equivalently $\varphi+\rho\Vert\cdot\Vert^2$ is a convex function. When $\gamma<2$, this equivalence fails to hold, for example, see \cite[Example 1]{rolewicz1979gamma}.

Together with Definition \ref{def:Phi-convex} of $\Phi$-convex functions, we introduce several concepts of subgradients. 
\begin{definition}
Let $\varphi:H\to (-\infty,+\infty]$ be proper and $x\in \mathrm{dom}\  \varphi$.
We say that $\phi\in \Phi$ is a $\Phi$-subgradient of $\varphi $ at $x$ if
    \[
    (\forall y\in H)\quad \varphi(y) -\varphi(x) \geq \phi(y)-\phi(x).
    \]
    We denote by $\partial_\Phi \varphi(x)$ the collection of all $\Phi$-subgradient of $\varphi$ at $x$.
    
On the other hand, for a proper function $\psi:\Phi\to (-\infty,+\infty]$ and $\phi\in \mathrm{dom }\psi$, we say that $x\in H$ is $\Phi$-subgradient of $\psi$ at $\phi$ if and only if
\[
(\forall \phi'\in\Phi) \quad \psi (\phi') - \psi(\phi) \geq \phi'(x)-\phi(x).
\]
The collection of all $\Phi$-subgradients at $\psi(\phi)$ is denoted by $\partial_\Phi \psi(\phi) \subset H$.
\end{definition}

\begin{proposition}[Proposition 1.2 \cite{Rub2013}]
\label{prop: phi-convex subgradient}
Let $\varphi:H\to (-\infty,+\infty]$ be proper and $y\in \mathrm{dom }\varphi$, then the $\Phi$-subdifferential $\partial_\Phi\varphi (y)$ is nonempty if and only if
\[
\varphi(y) = \max \{\phi(y): \phi\leq \varphi, \phi\in\Phi\}.
\]
\end{proposition}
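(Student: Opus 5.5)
We prove the two implications directly from the definition of the $\Phi$-subgradient. We use only that $\Phi$ is closed under addition of constants: for $\phi\in\Phi$ and $c\in\mathbb{R}$, the function $\phi+c$ (shifted pointwise) again lies in $\Phi$.

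\emph{Necessity.} Assume $\partial_\Phi\varphi(y)\neq\emptyset$ and pick $\phi\in\partial_\Phi\varphi(y)$. Since $y\in\mathrm{dom}\,\varphi$, the number $\varphi(y)$ is real, so we may set $\bar\phi:=\phi+(\varphi(y)-\phi(y))$; this belongs to $\Phi$. The subgradient inequality
\[
\varphi(z)-\varphi(y)\geq \phi(z)-\phi(y),\qquad z\in H,
\]
rearranges to $\varphi(z)\geq \bar\phi(z)$ for all $z\in H$, so $\bar\phi\leq\varphi$. Moreover $\bar\phi(y)=\varphi(y)$ by construction. On the other hand, every $\phi'\in\Phi$ with $\phi'\leq\varphi$ satisfies $\phi'(y)\leq\varphi(y)$. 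Hence the supremum $\sup\{\phi'(y):\phi'\leq\varphi,\ \phi'\in\Phi\}$ equals $\varphi(y)$ and is attained at $\bar\phi$, so it is a maximum.

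\emph{Sufficiency.} Assume the maximum is attained, i.e.\ there exists $\bar\phi\in\Phi$ with $\bar\phi\leq\varphi$ and $\bar\phi(y)=\varphi(y)$. Then for every $z\in H$,
\[
\varphi(z)-\varphi(y)\geq \bar\phi(z)-\bar\phi(y).
\]
When $\varphi(z)=+\infty$ this holds trivially; otherwise it follows by subtracting $\varphi(y)=\bar\phi(y)$ from $\varphi(z)\geq\bar\phi(z)$. Therefore $\bar\phi\in\partial_\Phi\varphi(y)$, and the subdifferential is nonempty.

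There is no real obstacle in either direction. The only points needing care are that $\varphi(y)$ is finite, which is exactly what allows the shift by $\varphi(y)-\phi(y)$, and the closedness of $\Phi$ under adding constants, which keeps $\bar\phi$ inside $\Phi$.
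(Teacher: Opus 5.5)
Your proof is correct and follows essentially the standard argument behind the cited result; the paper gives no proof of its own and only refers to Rubinov. Shifting a $\Phi$-subgradient $\phi$ by the constant $\varphi(y)-\phi(y)$ gives a minorant in $\Phi$ that touches $\varphi$ at $y$, and conversely any minorant attaining the maximum is itself a $\Phi$-subgradient. You rightly note that the finiteness of $\varphi(y)$ and the closedness of $\Phi$ under adding constants are needed only for the necessity direction.
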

By Definition \ref{def:Phi-convex}, the above equality means that $\varphi(y) = \varphi^\Phi (y)$, i.e. $\varphi$ is $\Phi$-convex at $y\in \mathrm{dom }\varphi$.
In the original version, Proposition \ref{prop: phi-convex subgradient} has been formulated for functions $\phi$ defined on any set $X$. This allows us to obtain the following proposition as well.
\begin{proposition}
\label{prop: phi-convex phi-subgradient}
Let $\psi:\Phi\to (-\infty,+\infty]$ be proper and $\phi\in \mathrm{dom }\psi$, then the $H$-subdifferential $\partial_\Phi\psi (\phi)$ is nonempty if and only if
\[
\psi(\phi) = \max \{y(\phi): y(\phi)\leq y(\psi), \forall y\in H\}.
\]
\end{proposition}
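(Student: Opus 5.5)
The plan is to reduce the statement to Proposition~\ref{prop: phi-convex subgradient} by exchanging the roles of $H$ and $\Phi$. That proposition was proved for functions on an arbitrary set $X$, with an arbitrary family of real-valued functions on $X$ that is closed under addition of constants. So take the base set to be $X:=\Phi$. Take as test family the evaluation functionals $\hat y:\Phi\to\mathbb{R}$, $\hat y(\phi):=\phi(y)=\langle \phi,y\rangle_\Phi$, one for each $y\in H$. With this identification, the expression $y(\phi)$ in the statement means $\hat y(\phi)$. The condition $y\le \psi$ means $\hat y(\phi')\le\psi(\phi')$ for all $\phi'\in\Phi$.

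First I check that the subdifferential matches. By definition, $x\in\partial_\Phi\psi(\phi)$ if and only if
\[
\psi(\phi')-\psi(\phi)\ \ge\ \hat x(\phi')-\hat x(\phi)\qquad\text{for all }\phi'\in\Phi .
\]
This is exactly the statement that $\hat x$ is a subgradient of $\psi$ at $\phi$ with respect to the family $\hat H:=\{\hat y: y\in H\}$, as in the first part of the definition with $X=\Phi$.

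I would then argue directly rather than only cite, since the proof is two lines and this avoids checking the hypotheses of the cited result. Suppose $x\in\partial_\Phi\psi(\phi)$. Set $h:=\hat x+(\psi(\phi)-\hat x(\phi))$. Then $h\le\psi$ on $\Phi$ and $h(\phi)=\psi(\phi)$, so the supremum of minorants at $\phi$ is attained and equals $\psi(\phi)$. Conversely, suppose some minorant $h\le\psi$ in the family satisfies $h(\phi)=\psi(\phi)$. Then for every $\phi'$ we get $\psi(\phi')-\psi(\phi)\ge h(\phi')-h(\phi)$. Since additive constants cancel in this difference, the underlying $x$ lies in $\partial_\Phi\psi(\phi)$.

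The main obstacle is that the family $\hat H$ need not be closed under addition of constants. The maximum in the statement should therefore be read over the enlarged family $\{\hat y+c : y\in H,\ c\in\mathbb{R}\}$, which is the standard convention in \cite{Rub2013}. Otherwise the statement may fail: for instance, if $\Phi$ is affine and $\hat y(\phi)=\langle u,y\rangle+c$, the constants come from $\phi$ itself and not from $y$. I would state this interpretation explicitly. With it, the hypotheses of Proposition~\ref{prop: phi-convex subgradient} hold verbatim on $X=\Phi$, and the claim follows, with the maximum attained by the shifted $\hat x$.
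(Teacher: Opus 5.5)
Your proposal is correct and takes the same route as the paper, which justifies this proposition only by remarking that Rubinov's Proposition~1.2 holds on an arbitrary set $X$, so one may take $X=\Phi$ with the evaluation functionals $\hat y(\phi)=\phi(y)$, $y\in H$, as test family. Your caveat goes beyond the paper and is well founded: $\{\hat y: y\in H\}$ is generally not closed under adding constants, so the maximum must indeed be taken over $\{\hat y+c\}$ as in your direct two-line argument. For example, take $\Phi=\Phi_{\mathrm{affine}}$ on $H=\mathbb{R}$ and $\psi(u,c)=c+1$; then $\partial_\Phi\psi(\phi)=\{0\}$, while the only minorant of the form $\hat y$ is $\hat 0$, and $\hat 0(\phi)=c<\psi(\phi)$, so the literal statement fails.
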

By Definition \ref{def:Phi-convex} and the remark below, the above inequality means that $\psi$ is $H$-convex.

\begin{definition}
Let $\varphi:H\to (-\infty,+\infty]$ be proper and $x\in \mathrm{dom}\  \varphi$.
\begin{itemize}
    \item For $1< \gamma \leq 2$, an element $u\in H$ is a $\gamma$-subgradient of $\varphi $ at $x$ if there exists a constant $C>0$ such that
    \[
    (\forall y\in H)\quad \varphi(y) -\varphi(x) \geq \langle u,y-x\rangle -C\Vert x-y\Vert^\gamma.
    \]
    We denote by $\partial_{\gamma} \varphi(x)$ the collection of all $\gamma$-subgradient of $\varphi$ at $x$.
    When the above inequality holds locally around a given $x$, the set of all local $\gamma$-subgradients of $\varphi$ at $x$ is denoted by $\partial_{\gamma,\mathrm{loc}} \varphi(x)$.
    \item When $\gamma=2$, and  $C=\rho/2>0$ is fixed, an element $u\in H$ satisfying the above inequality is a $\rho$-proximal subgradient of $\varphi$ at $x$ and we denote the collection of all $\rho$-proximal subgradients as $\partial_{(2,\rho)} \varphi (x)$.
    \item Let $x\in V\subset H, 2\geq \gamma>1$, we define the $\gamma$-proximal normal cone (see \cite{jourani1996subdifferentiability} for related definitions) as 
    \[
    N_{\gamma} (x;V):=\{ v\in H: \exists C>0, \forall y\in V, \quad \langle v,y-x\rangle \leq C\Vert x-y\Vert^\gamma\}.
    \]
    When $\gamma=2$, we recover the proximal normal cone. (see \cite[Chapter 1.1]{clarke2008nonsmooth})
\end{itemize}
\end{definition}

\begin{remark}
\label{rmk: weak phi subdiff}
The definition of $\gamma$-subdifferentials coincides with the definition of $\Phi$-weak subdifferentials given by \cite[Formula 2]{rolewicz2000cyclic} for $\alpha(t) = C\Vert t\Vert^\gamma$.
\end{remark}

\begin{remark}
\label{rmk: gamma subgrad gateaux}
It has been proved by \cite{jourani1996subdifferentiability} and \cite{giles1999survey} that the $\gamma$-subdifferentials at a point $x\in H$ coincides with G\^ateaux derivative $\nabla \varphi(x)$ whenever $\varphi$ is G\^ateaux differentiable. This property does not hold for $\Phi$-subdifferentials.
\end{remark}

\section{Proximal operator in context of abstract convexity}
\label{sec: prox main result}

The concept of $\Phi$-proximity operator has already appeared in some numerical algorithms \cite{bednarczuk2025proximal}. 
The authors in \cite{andra2002} proposed a cutting plane method using $\Phi$-subgradient, and it has been further improved by \cite{rakotomandimby2026subgradient} to apply to the class of "capra"-convex funtions. Recently, \cite{bednarczuk2026outer} proposes an outer approximation using quadratic cuts based on weakly convex functions.

Inspired by the results of \cite{gribonval2020characterization}, we investigate $\Phi$ proximity operator $f:\Phi\to H$ in the context of abstract convexity.
By using $\Phi$-subdifferentials, we extend the applicability of \cite[Theorem 3]{gribonval2020characterization} to nonconvex problems, namely $\Phi$-convex problems.
We start with the main theorem.

\begin{theorem}
\label{thm: prox relation general}
Let $J:\Phi \to(-\infty,+\infty],I:H\to(-\infty,+\infty],\mathcal{A}:\Phi \to\Phi, \mathcal{B}:H\to H$.
Consider $f:\Phi\to H$, its image set $\mathcal{\mathrm{Im}}f$ and $\mathrm{dom}\ f \subset \mathrm{dom}\ \mathcal{A}$.
\begin{enumerate}
\item Let $D\left(x,\phi\right)=J\left(\phi\right)-\left\langle \mathcal{A}\left(\phi\right),x\right\rangle _{\Phi}+I\left(x\right)$, the following are equivalent: 
\begin{enumerate}[label=\roman*.]
\item There exists $\varphi:H\to(-\infty,+\infty]$ such that $f\left(\phi\right)\in\arg\min_{z\in H}D\left(z,\phi\right)+\varphi\left(z\right)$
for each $\phi\in\Phi$.
\item There is a $\Phi$-convex $g:H\to(-\infty,+\infty]$ on $\mathrm{Im }f \subset\mathrm{dom } g$
such that $\mathcal{A}\left(f^{-1}\left(x\right)\right)\subset\partial_{\Phi}g\left(x\right)$ for each $x\in\mathrm{Im}f$.
\end{enumerate}
\item Let $\tilde{D}\left(x,\phi\right)=J\left(\phi\right)-\left\langle \phi, \mathcal{B}\left(x\right)\right\rangle _{\Phi}+ I\left(x\right)$.
The following are equivalent:
\begin{enumerate}[label=\roman*.]
\item There exists $\varphi:H\to(-\infty,+\infty]$ such that $f\left(\phi\right)\in\arg\min_{x\in H}\tilde{D}\left(x,\phi\right)+\varphi\left(x\right)$
for each $\phi\in\Phi$.
\item There is a $H$-convex $\psi:\Phi \to(-\infty,+\infty]$
such that $\mathcal{B}\left(f\left(\phi\right)\right)\in \partial_{\Phi}\psi\left(\phi\right)$ for each $\phi\in\Phi$.
\end{enumerate}
\end{enumerate}
\end{theorem}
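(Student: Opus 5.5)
The plan mirrors the proof of \cite[Theorem 3]{gribonval2020characterization}, with affine minorants replaced by elements of $\Phi$ and the classical subdifferential replaced by $\partial_\Phi$.

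For part 1, (i)$\Rightarrow$(ii), I would take $g := I + \varphi$. Fix $x \in \mathrm{Im} f$ and $\phi \in f^{-1}(x)$. Minimality of $x = f(\phi)$ for $z \mapsto D(z,\phi) + \varphi(z)$ means that, after cancelling $J(\phi)$,
\[
I(z) + \varphi(z) - \langle \mathcal{A}(\phi), z\rangle_\Phi \;\geq\; I(x) + \varphi(x) - \langle \mathcal{A}(\phi), x\rangle_\Phi \qquad \text{for all } z \in H .
\]
This is exactly $g(z) - g(x) \geq \mathcal{A}(\phi)(z) - \mathcal{A}(\phi)(x)$, so $\mathcal{A}(\phi) \in \partial_\Phi g(x)$. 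Finiteness of $g(x)$ has to be argued from the fact that $x$ is a minimizer of a proper problem. I would either read properness into ``argmin'' or note that $\mathrm{Im} f \subset \mathrm{dom}\, g$ is needed. Once $\partial_\Phi g(x) \neq \emptyset$, Proposition \ref{prop: phi-convex subgradient} gives that $g$ is $\Phi$-convex at $x$. Hence $g$ is $\Phi$-convex on $\mathrm{Im} f$.

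For (ii)$\Rightarrow$(i), I would set $\varphi := g - I$, with the convention $\varphi = +\infty$ wherever $g = +\infty$. The values of $\varphi$ where $I = +\infty$ need a convention, and I would state one explicitly, e.g.\ assume $I$ is finite on the relevant set. Reversing the computation above, $\mathcal{A}(\phi) \in \partial_\Phi g(f(\phi))$ gives that $f(\phi)$ minimizes $D(\cdot,\phi) + \varphi$. This requires $\phi \in \mathrm{dom}\, f$, which is the implicit setting of (i).

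Part 2 is the dual statement, with the roles of $H$ and $\Phi$ swapped and $\langle \phi, y \rangle_\Phi = \phi(y)$. For (i)$\Rightarrow$(ii), I would define the marginal function
\[
\psi(\phi) := \sup_{x \in H} \bigl( \phi(\mathcal{B}(x)) - I(x) - \varphi(x) \bigr) .
\]
This is a supremum of the maps $\phi \mapsto \phi(y)$ with $y = \mathcal{B}(x)$, shifted by constants, so it is $H$-convex as a supremum of $H$-elementary functions. Optimality of $f(\phi)$ says the supremum is attained at $x = f(\phi)$. Then, for every $\phi'$,
\[
\psi(\phi') \;\geq\; \phi'\bigl(\mathcal{B}(f(\phi))\bigr) - I(f(\phi)) - \varphi(f(\phi)) \;=\; \psi(\phi) + \phi'\bigl(\mathcal{B}(f(\phi))\bigr) - \phi\bigl(\mathcal{B}(f(\phi))\bigr),
\]
which is precisely $\mathcal{B}(f(\phi)) \in \partial_\Phi \psi(\phi)$. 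Proposition \ref{prop: phi-convex phi-subgradient} confirms the $H$-convexity at each point.

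For (ii)$\Rightarrow$(i), I would mimic Gribonval--Nikolova and define $\varphi$ on $\mathrm{Im} f$ by choosing, for $x = f(\phi)$,
\[
\varphi(x) := \phi(\mathcal{B}(x)) - \psi(\phi) - I(x),
\]
and set $\varphi := +\infty$ outside $\mathrm{Im} f$. The subgradient inequality then yields
\[
\phi(\mathcal{B}(x')) - I(x') - \varphi(x') = \psi(\phi') \;\geq\; \psi(\phi) + \phi'(\mathcal{B}(x')) - \phi(\mathcal{B}(x')) \qquad \text{whenever } x' = f(\phi').
\]
Reading this with the roles of $\phi$ and $\phi'$ chosen appropriately gives minimality of $f(\phi)$ over $\mathrm{Im} f$, and trivially outside it.

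\textbf{Main obstacle.} The key difficulty is well-definedness of $\varphi$ when $f(\phi_1) = f(\phi_2)$. One must show that $\phi(\mathcal{B}(x)) - \psi(\phi)$ is independent of the choice of $\phi \in f^{-1}(x)$. This follows by applying the subgradient inequality in both directions between $\phi_1$ and $\phi_2$, using $\mathcal{B}(x) \in \partial_\Phi \psi(\phi_i)$, which forces equality. The same care with infinite values of $I$ and $\psi$ is needed here as in part 1.
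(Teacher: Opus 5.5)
Your proposal is correct and follows the paper's proof.

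In part 1 you take $g=I+\varphi$ and $\varphi=g-I$; the paper additionally adds $\iota_{\mathrm{Im}f}$, which makes no difference on $\mathrm{Im}f$. In 2 (i)$\Rightarrow$(ii), your supremum $\psi$ coincides with the paper's $\psi(\phi)=\phi(\mathcal{B}(f(\phi)))-I(f(\phi))-\varphi(f(\phi))$, because the supremum is attained at $f(\phi)$. The only gain is that $H$-convexity is visible globally rather than pointwise via Proposition~\ref{prop: phi-convex phi-subgradient}.

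For 2 (ii)$\Rightarrow$(i) the paper only says ``analogously as 1-(ii)$\Rightarrow$1-(i)''. Here $\varphi$ has to be built on $\mathrm{Im}f$ from $\psi$ through $f$, so your Gribonval--Nikolova construction, together with the well-definedness check, is the argument actually needed. Your handling of that check is correct: the two-sided subgradient inequality forces $\psi(\phi_2)-\psi(\phi_1)=\phi_2(\mathcal{B}(x))-\phi_1(\mathcal{B}(x))$.

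The one thing to fix is the display in that last step, which is wrong as written. With your definition of $\varphi$, the left-hand side $\phi(\mathcal{B}(x'))-I(x')-\varphi(x')$ equals $\psi(\phi')+\phi(\mathcal{B}(x'))-\phi'(\mathcal{B}(x'))$, not $\psi(\phi')$. Also, the inequality $\psi(\phi')\geq\psi(\phi)+\phi'(\mathcal{B}(x'))-\phi(\mathcal{B}(x'))$ is the subgradient inequality for $\mathcal{B}(x')\in\partial_\Phi\psi(\phi)$, which is not given. Simply swapping $\phi$ and $\phi'$ does not repair both defects at once. Instead, use $\mathcal{B}(x')\in\partial_\Phi\psi(\phi')$ tested at $\phi$. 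For $x=f(\phi)$ and $x'=f(\phi')$ this gives
\[
\phi(\mathcal{B}(x'))-I(x')-\varphi(x') = \psi(\phi')+\phi(\mathcal{B}(x'))-\phi'(\mathcal{B}(x')) \leq \psi(\phi) = \phi(\mathcal{B}(x))-I(x)-\varphi(x),
\]
which is exactly $\tilde{D}(x',\phi)+\varphi(x')\geq\tilde{D}(x,\phi)+\varphi(x)$ when $J(\phi)$ is finite. With that line corrected, the proof is complete.
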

\begin{proof}
For 1-(i) $\Rightarrow$ 1-(ii), let us set $\theta:H \to (-\infty,+\infty], \theta := I+\varphi+\iota_{\mathrm{Im}f}$
and let $x=f\left(\phi\right)$ where $\phi\in\Phi$.
By assumption, $x$ is a minimizer of $D\left(z,\phi\right)+\varphi\left(z\right)$
i.e.
\begin{align*}
\left(\forall z\in H\right)\qquad D\left(z,\phi\right)+\varphi\left(z\right) & \geq D\left(x,\phi\right)+\varphi\left(x\right)\\
-\left\langle \mathcal{A}\left(\phi\right),z\right\rangle_\Phi +I\left(z\right)+\varphi\left(z\right) & \geq-\left\langle \mathcal{A}\left(\phi\right),x\right\rangle_\Phi +I\left(x\right)+\varphi\left(x\right).
\end{align*}
Since $x\in \mathrm{Im}\ f$, we have
\[
\left(I+\varphi+\iota_{\mathrm{Im}f}\right)\left(z\right)-\left\langle \mathcal{A}\left(\phi\right),z\right\rangle_\Phi \geq I\left(z\right)+\varphi\left(z\right)-\left\langle \mathcal{A}\left(\phi\right),z\right\rangle_\Phi \geq\left(I+\varphi+\iota_{\mathrm{Im}f}\right)\left(x\right)-\left\langle \mathcal{A}\left(\phi\right),x\right\rangle_\Phi ,
\]
which implies $\mathcal{A}\left(\phi\right)\in\partial_{\Phi}\theta\left(x\right)=\partial_{\Phi}\theta\left(f\left(\phi\right)\right)$
or $\mathcal{A}\left(f^{-1}\left(x\right)\right)\subset\partial_{\Phi}\theta\left(x\right)$
as $x=f\left(\phi\right)$. Hence, 1-(ii) holds. To see that $g$ is $\Phi$-convex, it is enough to note that $\partial_\Phi g(x)\neq\emptyset$ for every $x\in \mathrm{Im }f$ and by Proposition \ref{prop: phi-convex subgradient}, $g$ is $\Phi$-convex on $\mathrm{Im }f$.

Conversely, let 1-(ii) holds and $\theta_1:H\to (-\infty,+\infty], \theta_{1}=g+\iota_{\mathrm{Im}f}$.
By assumption, we have $\mathcal{A}\left(f^{-1}\left(x\right)\right)\subset \partial_{\Phi}g\left(x\right)\neq\emptyset$ for any $x\in\mathrm{Im}f$.
Since $\mathrm{dom}\ \partial_{\Phi}g\subset\mathrm{dom}\ g$, we have
$\mathrm{Im}f\subset\mathrm{dom}g$. Then 
\[
\mathrm{dom}\theta_{1}=\mathrm{dom}g\cap \mathrm{Im}f=\mathrm{Im}f.
\]
Let $\phi\in\Phi$ and $x=f\left(\phi\right),$ $x\in\mathrm{Im}f$. We have $\theta_{1}\left(x\right)=g\left(x\right)$
and 
\[
\mathcal{A}\left(\phi\right)\in\mathcal{A}\left(f^{-1}\left(x\right)\right)\subset\partial_{\Phi}g\left(x\right).
\]
By the definition of subdifferentials, we have 
\[
\left(\forall z\in H\right)\qquad\theta_{1}\left(z\right)-\theta_{1}\left(x\right)\geq g\left(z\right)-g\left(x\right)\geq \left\langle \mathcal{A}\left(\phi\right),z\right\rangle_\Phi -\left\langle \mathcal{A}\left(\phi\right),x\right\rangle_\Phi ,
\]
so $\mathcal{A}\left(\phi\right)\in\partial_{\Phi}\theta_{1}\left(x\right)$.
Let $\varphi=\theta_{1}-I$, then $x$ is a minimizer of $\arg\min_{z\in H}D\left(z,\phi\right)+\varphi\left(z\right)$.

For 2-(i)$\Rightarrow$ 2-(ii): Let us fix $x=f(\phi)$ from 2-(i) and for any $\phi' \in \Phi$, set $x'= f(\phi')$ as the minimizer of $\varphi+\tilde{D}(\cdot,\phi')$ which exists by assumption. We have
\begin{align}
& \varphi (x) +\tilde{D}(x,\phi')  \geq \varphi (x') +\tilde{D} (x',\phi') \nonumber\\
\Leftrightarrow \ & \varphi (x) + I(x)- \langle \phi', \mathcal{B}(x)\rangle_\Phi \geq \varphi(x') +I(x') - \langle \phi',\mathcal{B}(x')\rangle_\Phi. \label{eq: prop x_prime proof}
\end{align}

We denote $\psi:\Phi\to (-\infty,+\infty], \psi(\phi) :=-\varphi(f(\phi)) -I(f(\phi)) + \langle \phi, \mathcal{B}(f(\phi))\rangle_\Phi  $. Then \eqref{eq: prop x_prime proof} becomes
\[
-\psi\left(\phi'\right)+\left\langle \phi', \mathcal{B}\left(f\left(\phi\right)\right)\right\rangle _{\Phi} \leq-\psi\left(\phi\right)+\left\langle \phi, \mathcal{B}\left(f\left(\phi\right)\right)\right\rangle _{\Phi}.
\]
As the above inequality holds for any $\phi'\in\Phi$, we conclude that $\mathcal{B}(f(\phi)) \in \partial_{\Phi} \psi(\phi)$ which is 2-(ii). The $H$-convexity of $\psi$ is supported by Proposition \ref{prop: phi-convex phi-subgradient}.
The other relation 2-(ii)$\Rightarrow$(i) can be proved analogously as 1-(ii)$\Rightarrow$1-(i).
\end{proof}

We say that $f:\Phi\to H$ is a \textit{$\Phi$-proximity operator} if it satisfies either one of the two statements in Theorem \ref{thm: prox relation general}-1(i) or (ii), Theorem \ref{thm: prox relation general}-2(i) or (ii).

Let us recall that the classical proximal operator of any function $\varphi:H\to(-\infty,+\infty]$ at $x_0$ with parameter $\lambda>0$ is defined as,
\begin{equation}
\label{eq: classic prox}
f(x_0) = \argmin_{x\in H} \varphi(x) + \frac{1}{2\lambda}\Vert x-x_0\Vert^2.
\end{equation}
Hence, we define $D(x,x_0)= \frac{1}{2\lambda}\Vert x\Vert^2 - \frac{1}{\lambda}\langle x,x_0\rangle +\frac{1}{2\lambda}\Vert x_0\Vert^2$ or equivalently $D(x,\phi) = I(x) -\phi(x)+J(\phi)$ where $\phi(x) = \langle \frac{x_0}{\lambda},x\rangle$ and $J(\phi) = \frac{1}{2\lambda}\Vert x_0\Vert^2, I(x) = \frac{1}{2\lambda}\Vert x\Vert^2 $ or $\phi=x_0$.

\begin{remark}
In Theorem \ref{thm: prox relation general}, we require the existence of global minimizers for each $\phi\in\Phi$. 
Theorem \ref{thm: prox relation general}-1 remains true if we limit $\phi\in O\subset\Phi$.

In Theorem \ref{thm: prox relation general}-2, this is a strong assumption even in simple case e.g. let us take $\varphi (x)=I(x) =x^2$ and $\phi(x) = -a x^2$, then $\varphi(x) + D(x,\phi) = (2+a)x^2$ has a minimizer at $\phi$ with $a>-2$.
To avoid this drawback, one can modify Theorem \ref{thm: prox relation general}-2 by considering the minimizers to $\phi\in \mathcal{O}\subset \Phi$ and to consider local $\H$-subdifferentials.
\end{remark}

For $\Phi=\Phi_{\mathrm{affine}}$, Theorem \ref{thm: prox relation general} reduces to Theorem 3 in \cite{gribonval2020characterization}.
Let us give the definition of polygonally connected set which allows us to be more specific about the form of functions $g$ below. We say that a set $E\subset H$ is polygonally connected if for every pair $a,b \in E$, there exists $x_0=a, x_1,\cdots,x_n =b \in E$ such that $\bigcup_{0\leq i\leq n-1} [x_i,x_{i+1}] \subset E$, where $[a,b]$ is a segment from $a$ to $b$.

\begin{corollary}
\label{cor: lsc prox relation general}
Let $\Phi=\Phi_{lsc}^\mathbb{R}$, and let $J:\Phi_{lsc}^{\mathbb{R}}\to(-\infty,+\infty],I:H\to(-\infty,+\infty],\mathcal{A}:\Phi_{lsc}^{\mathbb{R}}\to\Phi_{lsc}^{\mathbb{R}},\mathcal{B}:H\to H$.
Consider $f:\Phi_{lsc}^{\mathbb{R}}\to H$ and assume $\mathrm{dom}\ f \subset\mathrm{dom}\ \mathcal{A}$.
\begin{enumerate}
\item Let $D\left(x,\phi\right)=J\left(\phi\right)- \langle\mathcal{A}\left(\phi\right), x\rangle_\Phi +I\left(x\right)$ for $\phi\in\Phi_{lsc}^\mathbb{R}$,
the following are equivalent: 
\begin{enumerate}[label=\roman*.]
\item There exists $\varphi:H\to(-\infty,+\infty]$ such that $f\left(\phi\right)\in\arg\min_{z\in H}D\left(z,\phi\right)+\varphi\left(z\right)$
for each $\phi\in\Phi_{lsc}^{\mathbb{R}}$.
\item There is a $\Phi_{lsc}^{\mathbb{R}}$-convex $g:H\to(-\infty,+\infty]$
such that $\mathcal{A}\left(f^{-1}\left(x\right)\right)\subset\partial_{\Phi}g\left(x\right)$
for each $x\in\mathrm{Im}f$.
\end{enumerate}
\item Let $\varphi$ and $g$ satisfy the above equivalence and let $E\subset\mathrm{Im}f$
be polygonally connected, then there exists $K\in\mathbb{R}$ such that 
\[
g\left(x\right)=I\left(x\right)+\varphi\left(x\right)+K,\quad x\in E.
\]
\item Let $\tilde{D}\left(x,\phi\right)=J\left(\phi\right)-\left\langle \phi, \mathcal{B}\left(x\right)\right\rangle _{\Phi}+ I\left(x\right)$.
The following are equivalent:
\begin{enumerate}[label=\roman*.]
\item There exists $\varphi:H\to(-\infty,+\infty]$ such that $f\left(\phi\right)\in\arg\min_{x\in H}\tilde{D}\left(x,\phi\right)+\varphi\left(x\right)$
for each $\phi\in\Phi_{lsc}^{\mathbb{R}}$.
\item There is a $\Phi_{lsc}^{\mathbb{R}}$-convex $\psi:\Phi_{lsc}^\mathbb{R} \to(-\infty,+\infty]$
such that $\mathcal{B}\left(f\left(\phi\right)\right)\in \partial_{\Phi}\psi\left(\phi\right)$
for each $\phi\in\Phi_{lsc}^{\mathbb{R}}$.
\end{enumerate}
\item If $\varphi$ and $\psi$ satisfy the third equivalence, and let $E'\subset \Phi_{lsc}^\mathbb{R}$ be polygonally connected. Then there exists a constant $G\in\mathbb{R}$ such that
\[
\psi(\phi) = \langle \phi, \mathcal{B} (f(\phi))\rangle_\Phi - J(\phi) - \varphi (f(\phi)) +G, \quad \forall \phi \in E'.
\]
\end{enumerate}
\end{corollary}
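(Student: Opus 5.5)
Items 1 and 3 follow from Theorem \ref{thm: prox relation general} with $\Phi=\Phi_{lsc}^\mathbb{R}$. The class $\Phi_{lsc}^\mathbb{R}$ is a family of real-valued functions on $H$ closed under addition of constants, and it is itself a Hilbert space, so the $H$-convexity of $\psi$ in Theorem \ref{thm: prox relation general}-2(ii) is exactly the convexity asked for in 3(ii). The real content is therefore items 2 and 4, the ``uniqueness up to a constant'' statements. Following the strategy of \cite{gribonval2020characterization}, I would prove them by a two-sided subgradient inequality along segments, followed by a chaining argument along polygonal paths.

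For item 2, fix $x,x'\in E\subset\mathrm{Im}f$ and pick $\phi\in f^{-1}(x)$, $\phi'\in f^{-1}(x')$. Set $h:=I+\varphi$. From the proof of 1-(i)$\Rightarrow$1-(ii), the minimality of $x=f(\phi)$ gives
\[
h(x')-h(x)\geq \mathcal{A}(\phi)(x')-\mathcal{A}(\phi)(x).
\]
From 1-(ii), $\mathcal{A}(\phi)\in\partial_\Phi g(x)$, so
\[
g(x')-g(x)\geq \mathcal{A}(\phi)(x')-\mathcal{A}(\phi)(x).
\]
Exchanging the roles of $x$ and $x'$ gives the same pair of bounds with $\mathcal{A}(\phi')$ evaluated at $x$ and $x'$. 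Hence both increments $h(x')-h(x)$ and $g(x')-g(x)$ lie in the interval
\[
\bigl[\mathcal{A}(\phi)(x')-\mathcal{A}(\phi)(x),\ \mathcal{A}(\phi')(x')-\mathcal{A}(\phi')(x)\bigr].
\]
To conclude that $g-h$ is constant, I would restrict to a segment $[a,b]\subset E$ and parametrize $t\mapsto a+t(b-a)$. Write $\mathcal{A}(\phi_t)=(a_t,u_t,c_t)$ for some $\phi_t\in f^{-1}(x_t)$. Along the segment, $\mathcal{A}(\phi_t)(x_t)$ is a quadratic in $t$, so the sandwich says that $k(t):=g(x_t)-h(x_t)$ satisfies
\[
|k(t)-k(s)|\leq \bigl(\mathcal{A}(\phi_t)-\mathcal{A}(\phi_s)\bigr)(x_t)-\bigl(\mathcal{A}(\phi_t)-\mathcal{A}(\phi_s)\bigr)(x_s).
\]
The key step is to show that this right-hand side is $o(|t-s|)$ after summing over a fine partition of $[0,1]$. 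Exactly as in the one-dimensional argument of Gribonval--Nikolova for monotone selections, the function $\tau(t):=\frac{d}{dt}$-type slope of $\mathcal{A}(\phi_t)$ along the segment is monotone in $t$ by the cyclic inequalities above. A monotone function is Riemann integrable, so the upper and lower Riemann sums of the increments both converge to the same limit for $g$ and for $h$. This shows $g(b)-h(b)=g(a)-h(a)$, and chaining along the polygonal path $x_0,\dots,x_n$ gives the constant $K$ on all of $E$.

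The main obstacle is the quadratic coefficient $-a\Vert x\Vert^2$: unlike the affine case, the increment $\mathcal{A}(\phi)(x')-\mathcal{A}(\phi)(x)$ is not linear in $x'-x$. I would handle this by writing $x'=x+\delta d$ with $d=b-a$, so that
\[
\mathcal{A}(\phi)(x')-\mathcal{A}(\phi)(x)=\delta\langle u-2a x,d\rangle-a\delta^2\Vert d\Vert^2.
\]
The $\delta^2$ term sums to $O(1/n)$ over a uniform partition into $n$ pieces, provided the coefficients $a_t$ are bounded along the segment. I expect this boundedness is exactly what must be extracted or assumed, for instance from finiteness of the bounds at the endpoints. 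The effective slope $\langle u_t-2a_tx_t,d\rangle$ is then nondecreasing in $t$ by the two-sided inequalities, up to the same $O(\delta^2)$ error, and the Riemann-sum argument goes through.

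Item 4 is proved symmetrically on the dual side. The proof of 2-(i)$\Rightarrow$2-(ii) gives $\mathcal{B}(f(\phi))\in\partial_\Phi\psi_0(\phi)$ for the candidate
\[
\psi_0(\phi)=\langle\phi,\mathcal{B}(f(\phi))\rangle_\Phi-I(f(\phi))-\varphi(f(\phi)),
\]
with $J$ shifting the candidate as in the statement, so $\psi_0$ and $\psi$ share the subgradient selection $\phi\mapsto\mathcal{B}(f(\phi))$. Here $\langle\phi,x\rangle_\Phi=\phi(x)$ is genuinely bilinear in $(\phi,x)$ with respect to the Hilbert structure of $\Phi_{lsc}^\mathbb{R}$, because $\phi(x)=\langle(a,u,c),(-\Vert x\Vert^2,x,1)\rangle$. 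The affine argument of \cite{gribonval2020characterization} therefore applies verbatim along segments in $E'$, with no quadratic correction. This gives $\psi-\psi_0\equiv G$ on $E'$.
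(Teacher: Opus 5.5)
\paragraph{Item 2 is false as stated.} You reduce items 1 and 3 to Theorem \ref{thm: prox relation general} just as the paper does, and you have located the crux of item 2 correctly. The problem is the boundedness of the quadratic coefficients $a_t$ along a segment, which you hope to ``extract or assume''. It cannot be extracted: nothing in the hypotheses controls $a_t$ in the interior of a segment, and without such control item 2 fails. A counterexample:
\begin{itemize}
\item Take $H=\mathbb{R}$, $I\equiv J\equiv 0$, $\varphi=\iota_{[0,1]}$, and let $g$ be $1$ on $[0,\tfrac12)$, $0$ on $[\tfrac12,1]$ and $+\infty$ elsewhere.
\item Let $f:\Phi_{lsc}^{\mathbb{R}}\to[0,1]$ be any surjection, and set $\mathcal{A}(\phi):=(a_t,2a_tt,0)$ with $t=f(\phi)$, where $a_t=(\tfrac12-t)^{-2}$ for $t<\tfrac12$ and $a_t=0$ for $t\geq\tfrac12$.
\item Then $\mathcal{A}(\phi)(z)-\mathcal{A}(\phi)(t)=-a_t(z-t)^2$, so $t=f(\phi)$ minimizes $D(\cdot,\phi)+\varphi=a_t(\cdot-t)^2-a_tt^2+\iota_{[0,1]}$. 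This is 1-(i).
\item Also $g(z)-g(t)\geq -a_t(z-t)^2$ for every $z$; the only nontrivial case, $t<\tfrac12\leq z\leq 1$, reads $(z-t)^2\geq(\tfrac12-t)^2$. Hence $\mathcal{A}(f^{-1}(t))\subset\partial_\Phi g(t)$ with $g$ $\Phi_{lsc}^{\mathbb{R}}$-convex. This is 1-(ii).
\end{itemize}
Yet on the convex set $E=\mathrm{Im}f=[0,1]$ one has $g-I-\varphi=\mathbf{1}_{[0,1/2)}$, which is not constant. No better choice of selection helps. For $t\in(0,\tfrac12)$, local constancy of $g$ forces every $(a,u,c)\in\partial_\Phi g(t)$ to satisfy $u=2at$, and then $a\geq(\tfrac12-t)^{-2}$. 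So in your estimate the $\delta^2$-terms next to $t=\tfrac12$ stay of order one, matching the jump. (You, like the paper, also tacitly need $J(\phi)<+\infty$ and $(I+\varphi)(f(\phi))<+\infty$ to cancel terms in the minimality inequality; otherwise 1-(i) carries no information.)

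\paragraph{The paper's proof, the fix, and item 4.} The paper's proof has the same hole. It invokes Lemma \ref{lem: lsc conv poly-connected}, which shifts by $\tilde a\Vert\cdot\Vert^2$ with $\tilde a=\sup_{t\in[0,1]}a_t$ declared finite ``as we are taking $t\in[0,1]$''. The pair $\theta_1=g$, $\theta_2=\varphi$ above violates that lemma. So your route is essentially the paper's, with its hidden assumption made visible. Item 2 needs an added hypothesis, e.g.\ that along each segment of $E$ there are common $\Phi$-subgradients $(a_t,u_t,c_t)$ of $g$ and $I+\varphi$ with $\bar a:=\sup_t a_t<+\infty$. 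This holds, for instance, if both functions are $\rho$-weakly convex, cf.\ Lemma \ref{lem: diff const wc}.

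Under that hypothesis your argument closes without any monotonicity or Riemann integrability. Set $\sigma_t:=\langle u_t-2a_tx_t,d\rangle$. The interval containing both increments over $[t_i,t_{i+1}]$ has width $(\sigma_{t_{i+1}}-\sigma_{t_i})/n+(a_{t_i}+a_{t_{i+1}})\Vert d\Vert^2/n^2$. Summing over the uniform partition, the first terms telescope, so $|k(1)-k(0)|\leq(\sigma_1-\sigma_0+2\bar a\Vert d\Vert^2)/n\to 0$.

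For item 4 your reduction is right. Only linearity of $\phi\mapsto\phi(x)$ is needed; the pairing is quadratic, not linear, in $x$. But what the argument proves is $\psi=\langle\phi,\mathcal{B}(f(\phi))\rangle_\Phi-I(f(\phi))-\varphi(f(\phi))+G$. ``$J$ shifting the candidate'' is not a valid step. $J(\phi)$ is an additive constant in the minimization of 3-(i), so altering $J$ by a non-constant finite function leaves 3-(i) and 3-(ii) untouched while changing the right-hand side of item 4. The $-J(\phi)$ in the statement should be $-I(f(\phi))$, which is what the theorem's construction gives and what the paper's hard-thresholding example uses. You should say so explicitly rather than absorb it.
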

\begin{proof}
We only prove the second statement, the forth statement can be proved similarly. 
Consider the functions $g$ and $\varphi$ satisfying Theorem \ref{thm: prox relation general}-1(i,ii). We obtain that $\mathcal{A}(f^{-1}(x)) \subset \partial_\Phi g(x) \cap \partial_\Phi (\varphi +I)(x) \neq\emptyset$. Applying Lemma \ref{lem: lsc conv poly-connected} (see Section \ref{sec: supporting proof} below) with $E$ polygonally connected, there exists a constant $K\in\mathbb{R}$ such that for any $x\in E$,
\[
g(x) = I(x)+\varphi (x) +K.
\]
\end{proof}
Corollary \ref{cor: lsc prox relation general} holds true for functions $g$ and $\psi$ which are weakly convex and $\gamma$-paraconvex functions ($2\geq\gamma>1)$ by using proximal subdifferentials or $\gamma$-subdifferentials, respectively.
Before investigating this, we observe that $\gamma$-subdifferentials are $\gamma$-monotone in the sense that there exists a constant $C>0$ such that
\[
\langle x_1-x_2,u_2-u_1\rangle \leq C\Vert x_1-x_2\Vert^\gamma,
\]
for all $u_1 \in \partial_{\gamma}\varphi (x_1)$ and $u_2 \in \partial_{\gamma}\varphi (x_2)$, where $x_1,x_2\in\mathrm{dom }\varphi$.
The following fact holds.
\begin{theorem}[Theorem 7.1 \cite{jourani1996subdifferentiability}]
\label{thm: gamma monotone}
Let $H$ be Hilbert and $\varphi:H\to (-\infty,+\infty]$ be a proper lower-semicontinuous function. Suppose that $\partial_{\gamma} \varphi (x)$ is nonempty for $x\in \mathrm{dom }\varphi$, then $\varphi$ is $\gamma$-paraconvex with constant $C$.
\end{theorem}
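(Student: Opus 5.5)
To prove Theorem~\ref{thm: gamma monotone}, I would pass from the pointwise $\gamma$-subgradient inequality to the paraconvexity inequality of Definition~\ref{def:paraconvex} by evaluating the subgradient at the intermediate point of a segment. Fix $x,y\in\mathrm{dom}\,\varphi$ and $\lambda\in[0,1]$, and set $z=\lambda x+(1-\lambda)y$. If $z\notin\mathrm{dom}\,\varphi$ we cannot use a subgradient at $z$. Note, however, that if either $x$ or $y$ lies outside $\mathrm{dom}\,\varphi$, the paraconvexity inequality is trivial. So the first step is to show that $\mathrm{dom}\,\varphi$ is convex. I would get this by taking a subgradient $u\in\partial_\gamma\varphi(x)$ and proceeding by contradiction. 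Alternatively, I would simply note that the hypothesis, read as ``$\partial_\gamma\varphi(x)\neq\emptyset$ for every $x\in\mathrm{dom}\,\varphi$'', together with lower semicontinuity, is what Jourani uses. I expect convexity of the domain to be the delicate point. My first attempt would be to approximate $z$ by points of the domain, using density of points where $\partial_\gamma\varphi\ne\emptyset$, and pass to the limit via lsc.

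Granting $z\in\mathrm{dom}\,\varphi$, pick $u\in\partial_\gamma\varphi(z)$ with constant $C_z$. Apply the defining inequality at $x$ and at $y$:
\[
\varphi(x)\ge\varphi(z)+\langle u,x-z\rangle-C_z\Vert x-z\Vert^\gamma,\qquad \varphi(y)\ge\varphi(z)+\langle u,y-z\rangle-C_z\Vert y-z\Vert^\gamma .
\]
Multiply the first inequality by $\lambda$, the second by $1-\lambda$, and add. Since $\lambda(x-z)+(1-\lambda)(y-z)=0$, the linear terms cancel. Using $\Vert x-z\Vert=(1-\lambda)\Vert x-y\Vert$ and $\Vert y-z\Vert=\lambda\Vert x-y\Vert$, this gives
\[
\varphi(z)\le\lambda\varphi(x)+(1-\lambda)\varphi(y)+C_z\big(\lambda(1-\lambda)^\gamma+(1-\lambda)\lambda^\gamma\big)\Vert x-y\Vert^\gamma .
\]
Since $\gamma>1$, we have $(1-\lambda)^{\gamma-1}+\lambda^{\gamma-1}\le 2$, and therefore the bracket is at most $2\lambda(1-\lambda)$. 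This yields the paraconvex inequality with constant $2C_z$.

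The main obstacle beyond the domain issue is uniformity: Definition~\ref{def:paraconvex} requires one constant $C$, while the subgradient constant $C_z$ depends on $z$. The statement's phrase ``with constant $C$'' suggests reading the hypothesis with a common constant, as in the $\gamma$-monotonicity remark preceding the theorem, and under that reading the argument above is complete. Without a uniform constant, I would attempt a Baire category argument. Using lower semicontinuity, the sets $\{z:\partial_\gamma\varphi(z)\text{ contains }u\text{ with constant }\le n\}$ are closed, so one of them has nonempty interior. This gives local paraconvexity with a uniform constant, which could then be propagated along segments. This propagation is the step I would expect to require the most care, and I would defer to Jourani's Theorem~7.1 for it.
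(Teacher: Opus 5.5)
\textbf{There is a genuine gap here, and it cannot be closed, because the statement as written is false.} The paper gives no proof of this result; it only cites Jourani. The two steps you postpone, convexity of $\mathrm{dom}\,\varphi$ and a uniform constant, do not follow from the stated hypotheses.

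\emph{Domain.} Take $H=\mathbb{R}$ and $\varphi=\iota_{\{0,1\}}$. This function is proper and lsc. Moreover $0\in\partial_\gamma\varphi(0)\cap\partial_\gamma\varphi(1)$ with the same constant $C=1$: the only nontrivial test is the other point of $\{0,1\}$, where the inequality reads $0\ge -1$. Yet $\varphi(\tfrac12)=+\infty$, so $\varphi$ is not $\gamma$-paraconvex. Hence no contradiction or density/lsc argument can prove that $\mathrm{dom}\,\varphi$ is convex; the hypothesis says nothing at points outside the domain. This also shows that your claim that under the common-constant reading ``the argument above is complete'' is wrong, since the segment step still needs $z\in\mathrm{dom}\,\varphi$.

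\emph{Uniformity.} Take $\varphi(x)=\sin(x^2)$ on $\mathbb{R}$. At every $x$, $\varphi'(x)$ is a $\gamma$-subgradient for every $\gamma\in(1,2]$, with a constant $C_x$ of order $1+x^2$. To see this, use $\vert\varphi\vert\le 1$ when $\vert y-x\vert\ge 1$, and a bound on $\varphi''$ over $[x-1,x+1]$ otherwise. Now set $a_k=\sqrt{\pi/2+2k\pi}$ and $h_k=\pi/(2a_k)\to 0$. Then $\varphi(a_k)=1$ and $\varphi(a_k\pm h_k)=-\cos(h_k^2)\to-1$. So the paraconvexity inequality with $\lambda=\tfrac12$, $x=a_k-h_k$, $y=a_k+h_k$ fails for every fixed $C$. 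Therefore no Baire-category or propagation argument can produce a global constant.

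Your Baire sketch is also unjustified as stated: closedness of your sets needs a bound on $\Vert u\Vert$, and $\mathrm{dom}\,\varphi$ need not be a Baire space. Deferring these steps to Jourani is therefore not a proof. The label and the remark preceding the theorem suggest the intended result is a $\gamma$-monotonicity statement with a single constant, not a consequence of mere pointwise nonemptiness.

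\textbf{What you do have is correct and is the right mechanism.} For $z=\lambda x+(1-\lambda)y\in\mathrm{dom}\,\varphi$, your weighted sum of the two subgradient inequalities cancels the linear terms. Using $(1-\lambda)^{\gamma-1}+\lambda^{\gamma-1}\le 2$, it then gives the paraconvexity inequality with constant $2C_z$. This is a complete, elementary proof of the corrected statement, and lower semicontinuity is not even needed: if $\mathrm{dom}\,\varphi$ is convex and there is one $C>0$ such that every $z\in\mathrm{dom}\,\varphi$ has a $\gamma$-subgradient with constant $C$, then $\varphi$ is $\gamma$-paraconvex with constant $2C$. Those two hypotheses should be assumed explicitly rather than argued for.

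This matters for how the paper uses the result. In the proof of Corollary~\ref{cor: paraconvex prox relation general} it is applied to $g=I+\varphi+\iota_{\mathrm{Im}f}$, whose domain $\mathrm{Im}f$ need not be convex. In the paper's own paraconvex example, $\mathrm{Im}f=(-\infty,-1]\cup[1,+\infty)$, which is exactly the situation of the first counterexample.
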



\begin{corollary}
\label{cor: paraconvex prox relation general}
Let $H$ be a Hilbert space and $I,J:H\to(-\infty,+\infty]$ are continuous, $\mathcal{A}:\H \to H,\mathcal{B}:H\to H$.
Consider $1<\gamma\leq 2$, $f:H\to H$, assume $\mathrm{Im }f$ is a closed set.
\begin{enumerate}
\item Let $D\left(x,y\right)=J\left(y \right)- \langle\mathcal{A}\left(y \right), x\rangle +I\left(x\right)$ for $y\in H$,
the following are equivalent: 
\begin{enumerate}[label=\roman*.]
\item There exists a lower semicontinuous $\varphi:H\to(-\infty,+\infty]$ such that $f\left(y\right)\in\arg\min_{x\in H}D\left(x,y\right)+\varphi\left(x\right)$
for each $y\in H$.
\item There is a lower semicontinuous $\gamma$-paraconvex $g:H\to(-\infty,+\infty]$
such that $\mathcal{A}\left(f^{-1}\left(x\right)\right)\subset\partial_{\gamma}g\left(x\right)$
for each $x\in\mathrm{Im}f$.
\end{enumerate}
\item Let $\varphi$ and $g$ satisfy the above statement and let $E\subset\mathrm{Im}f$
be polygonally connected, then there exists $K\in\mathbb{R}$ such that 
\[
g\left(x\right)=I\left(x\right)+\varphi\left(x\right)+K,\quad x\in E.
\]
\item Let $\tilde{D}\left(x,y\right)=J\left(y\right)-\left\langle y, \mathcal{B}\left(x\right)\right\rangle+ I\left(x\right)$.
The following are equivalent:
\begin{enumerate}[label=\roman*.]
\item There exists a lower semicontinuous $\varphi:H\to(-\infty,+\infty]$ such that $f\left(y\right)\in\arg\min_{x\in H}\tilde{D}\left(x,y\right)+\varphi\left(x\right)$
for each $y\in H$.
\item There is a lower semicontinuous $\gamma$-paraconvex $\psi:H \to(-\infty,+\infty]$
such that $\mathcal{B}\left(f\left(y\right)\right)\in \partial_{\gamma}\psi\left(y\right)$
for each $y\in H$.
\end{enumerate}
\item If $\varphi$ and $\psi$ satisfy the third equivalence, and let $E'\subset H$ be polygonally connected. Then there exists a constant $G\in\mathbb{R}$ such that
\[
\psi(y) = \langle y, \mathcal{B} (f(y))\rangle - J(y) - \varphi (f(y)) +G, \quad \forall y \in E'.
\]
\end{enumerate}
\end{corollary}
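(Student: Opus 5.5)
I will mirror the proof of Theorem \ref{thm: prox relation general}, replacing $\Phi$-subgradients by $\gamma$-subgradients. Throughout I take $\Phi=\Phi_{\mathrm{affine}}$, identifying $\mathcal{A}(y)$ with an element of $H$, so that $\langle \mathcal{A}(y),x\rangle$ is the inner product. The $\gamma$-paraconvexity conclusions will come from Theorem \ref{thm: gamma monotone} rather than Proposition \ref{prop: phi-convex subgradient}.

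\emph{Part 1.} For (i)$\Rightarrow$(ii), fix $y$, put $x=f(y)$, and set $g:=I+\varphi+\iota_{\mathrm{Im}f}$. Minimality of $x$ gives, for all $z\in H$,
\[
g(z)-g(x)\geq \langle \mathcal{A}(y),z-x\rangle \geq \langle \mathcal{A}(y),z-x\rangle - C\Vert z-x\Vert^\gamma .
\]
Hence $\mathcal{A}(f^{-1}(x))\subset\partial_\gamma g(x)$ with any $C>0$. The function $g$ is lower semicontinuous: $I$ is continuous, $\varphi$ is lsc, and $\mathrm{Im}f$ is closed, so $\iota_{\mathrm{Im}f}$ is lsc. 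Moreover $\mathrm{dom}\,g\subset \mathrm{Im}f$, so $\partial_\gamma g(x)\neq\emptyset$ on all of $\mathrm{dom}\,g$. Theorem \ref{thm: gamma monotone} then shows that $g$ is $\gamma$-paraconvex.

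For (ii)$\Rightarrow$(i), I first try to set $\theta_1=g+\iota_{\mathrm{Im}f}$ and $\varphi=\theta_1-I$, as in the theorem. The difficulty is that a $\gamma$-subgradient only yields
\[
\theta_1(z)-\theta_1(x)\geq\langle\mathcal{A}(y),z-x\rangle-C\Vert z-x\Vert^\gamma ,
\]
which is not a global minimality statement for $D(\cdot,y)+\varphi$. This is the main obstacle.

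My first attempt at a fix is to read the $\gamma$-subgradient inequality as a $\Phi$-subgradient inequality for the class $\Phi_\gamma=\{z\mapsto\langle u,z\rangle - C\Vert z-x\Vert^\gamma + c\}$, as in Remark \ref{rmk: weak phi subdiff}. I would then absorb the penalty into $\varphi$: for $\gamma=2$ with the fixed constant $\rho$, take $\varphi=\theta_1-I+\frac{\rho}{2}\Vert\cdot\Vert^2$ and correspondingly modify $\mathcal{A}$ by the linear term $\rho x$. If this fails for general $\gamma$, I would state and prove only the implication (i)$\Rightarrow$(ii) in full. For the converse, I would either require that the subgradient inequality hold with the penalty included in $D$, or derive it via Theorem \ref{thm: prox relation general}-1 applied to the class $\Phi_\gamma$. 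I expect the authors to take this last route, treating $\gamma$-subgradients as $\Phi$-weak subgradients.

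\emph{Part 3.} The argument is symmetric, following 2-(i)$\Rightarrow$2-(ii) of Theorem \ref{thm: prox relation general}. Define
\[
\psi(y)=-\varphi(f(y))-I(f(y))+\langle y,\mathcal{B}(f(y))\rangle .
\]
Minimality of $f(y')$ then gives, for all $y'$,
\[
\psi(y')-\psi(y)\geq\langle y'-y,\mathcal{B}(f(y))\rangle ,
\]
so in fact $\mathcal{B}(f(y))$ is a convex subgradient of $\psi$ everywhere. Since $\psi$ has a (convex, hence $\gamma$-) subgradient at every point, it is a supremum of affine functions and therefore convex; being finite everywhere, it is lsc, hence $\gamma$-paraconvex. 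The converse is treated as in Part 1.

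\emph{Parts 2 and 4.} As in Corollary \ref{cor: lsc prox relation general}, $\mathcal{A}(f^{-1}(x))$ lies in both $\partial_\gamma g(x)$ and $\partial_\gamma(I+\varphi)(x)$ for $x\in E$. I then apply the analogue of Lemma \ref{lem: lsc conv poly-connected} for $\gamma$-subdifferentials, proved by a one-dimensional argument along each segment $[x_i,x_{i+1}]$. Let $h$ denote the restriction to the segment of the difference of $g$ and $I+\varphi$. At each point of the segment the two functions share a common subgradient, so the upper Dini derivative of $h$ is bounded by $2C|t|^{\gamma-1}\to0$. Hence $h$ has zero derivative along the segment and is constant on it. Chaining these constants along the polygonal path gives a single $K$, which proves Part 2. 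Part 4 follows in the same way on $E'\subset H$.
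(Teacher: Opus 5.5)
\textbf{Gap 1: the converse directions are unproved, and they are false as stated.} Your proposal does not prove 1(ii)$\Rightarrow$1(i) or 3(ii)$\Rightarrow$3(i). The obstacle you isolated is real and cannot be removed. The paper gets past it only by setting $\varphi=\theta_1-I+C\Vert\cdot-f(y)\Vert^\gamma$. That $\varphi$ depends on $y$ (and on the pointwise constant $C$), whereas 1(i) asks for one $\varphi$ serving every $y$.

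Here is a counterexample. Take $H=\mathbb{R}$, $\gamma=2$, $I=J=0$, $f=\mathrm{Id}$, $\mathcal{A}(y)=-2y$ and $g(x)=-x^2$. Then $g$ is lsc and $2$-paraconvex with $\partial_{2}g(x)=\{-2x\}$, so 1(ii) holds. Now suppose 1(i) holds with $\varphi\not\equiv+\infty$, which your forward direction and the paper's also need. Then $\varphi$ is finite and $\varphi(x)+2xy\geq\varphi(y)+2y^2$ for all $x,y$. Adding the same inequality with $x,y$ interchanged gives $0\geq 2(x-y)^2$, a contradiction. The data $\psi(y)=-y^2$, $\mathcal{B}=\mathrm{Id}$, $f(y)=-2y$ refute 3(ii)$\Rightarrow$3(i) in the same way.

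Your proposed repairs all change the statement rather than prove it: replacing $\mathcal{A}(y)$ by $\mathcal{A}(y)+\rho f(y)$, moving the penalty into $D$, or passing to the class $\Phi_\gamma$. Only the forward implications and Parts 2 and 4 can be proved, and each needs a correction, described next.

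\textbf{Gap 2: the forward step, Parts 2 and 4, and the formula in Part 4.} Your forward step 1(i)$\Rightarrow$1(ii) has a gap that the paper shares. The function $g=I+\varphi+\iota_{\mathrm{Im}f}$ is in general \emph{not} $\gamma$-paraconvex. Definition~\ref{def:paraconvex} forces a convex domain, while $\mathrm{dom}\,g=\mathrm{Im}f$ need not be convex; in the paper's own paraconvex example $\mathrm{Im}f=(-\infty,-1]\cup[1,+\infty)$. Theorem~\ref{thm: gamma monotone} cannot give this conclusion: $\iota_{\{0,1\}}$ is lsc and has $0$ as a $\gamma$-subgradient at both points of its domain, yet it is not $\gamma$-paraconvex.

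The fix uses what you already derived: $z\mapsto g(f(y))+\langle\mathcal{A}(y),z-f(y)\rangle$ is an affine minorant of $g$ that is exact at $f(y)$. Replace $g$ by its closed convex hull, the supremum of its continuous affine minorants. This function is proper, convex and lsc, hence $\gamma$-paraconvex. It agrees with $g$ on $\mathrm{Im}f$ and still has $\mathcal{A}(y)$ as a subgradient at $f(y)$. It also makes continuity of $I$ and closedness of $\mathrm{Im}f$ unnecessary. Your forward argument in Part 3 is fine. The lsc property of $\psi$ comes from continuity of the affine minorants, not from finiteness.

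In Parts 2 and 4 the Dini bound is not justified. An upper bound on $g(t+s)-g(t)$ needs the subgradient at $t+s$, so what you actually get is
\[
h(t+s)-h(t)\leq (u_{t+s}-u_t)s+2C|s|^\gamma ,
\]
and $u_{t+s}-u_t$ need not tend to $0$. A common $\gamma$-subgradient at each point is not enough by itself. With the paper's pointwise constant, $0$ is a $\gamma$-subgradient at every point both of the zero function and of the step function equal to $0$ on $(-\infty,\frac12]$ and $1$ on $(\frac12,+\infty)$. You must use the extra structure. Along each segment, $g$ is $\gamma$-paraconvex and $I+\varphi$ is convex, since it has affine supports at every point. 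Hence $h$ is locally Lipschitz, continuous up to the endpoints, and has $h'=0$ almost everywhere, as in Lemma~\ref{lem: paraconv subdiff real line}.

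Finally, your $\psi$ yields Part 4 with $-I(f(y))$ in place of $-J(y)$. Since $J$ does not affect the minimizers, the formula as stated with $J(y)$ is false in general and should be corrected.
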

\begin{proof}
The proof follows the lines of the proof of Theorem \ref{thm: prox relation general}. We only present the differences.
For 1-(i) $\Rightarrow$ 1-(ii), let us set $\theta:H \to (-\infty,+\infty], \theta := I+\varphi+\iota_{\mathrm{Im}f}$
and let $x=f\left(y\right), y\in H$.
By assumption, $x$ is a minimizer of $D\left(z,y\right)+\varphi\left(z\right)$
i.e.
\begin{align*}
\left(\forall z\in H\right)\qquad D\left(z,y\right)+\varphi\left(z\right) & \geq D\left(x,y\right)+\varphi\left(x\right)\\
I\left(z\right)+\varphi\left(z\right) & \geq \left\langle \mathcal{A}\left(y\right),z-x\right\rangle +I\left(x\right)+\varphi\left(x\right)\\
& \geq \left\langle \mathcal{A}\left(y\right),z-x\right\rangle +I\left(x\right)+\varphi\left(x\right) -C\Vert z-x\Vert^\gamma,
\end{align*}
for some constant $C>0$.
We deduce that $\mathcal{A}\left(y\right)\in\partial_{\gamma}\theta\left(x\right)=\partial_{\gamma}\theta\left(f\left(y\right)\right)$
or $\mathcal{A}\left(f^{-1}\left(x\right)\right)\subset\partial_{\gamma}\theta\left(x\right)$
as $x=f\left(y\right)$. Hence, 1-(ii) holds with $g=\theta$. By assumption, we see that $g$ is lower semicontinuous and $\partial_\gamma g(x)\neq\emptyset$, so it is $\gamma$-paraconvex by Theorem \ref{thm: gamma monotone}.

Conversely, let 1-(ii) holds and set $\theta_{1}=g+\iota_{\mathrm{Im}f}$ which is lower semicontinuous.
By assumption, we have $\mathcal{A}\left(f^{-1}\left(x\right)\right)\subset \partial_{\gamma}g\left(x\right)\neq\emptyset$ for any $x\in\mathrm{Im}f$.
Let us fix an arbitrary $y\in H$ and set $x=f\left(y\right)$. We have $\theta_{1}\left(x\right)=g\left(x\right)$
and 
\[
\mathcal{A}\left(y\right)\in\mathcal{A}\left(f^{-1}\left(x\right)\right)\subset\partial_{\gamma}g\left(x\right).
\]
By the definition of $\gamma$-subdifferentials, we have 
\[
\left(\exists C>0, \forall z\in H\right)\qquad\theta_{1}\left(z\right)-\theta_{1}\left(x\right)\geq g\left(z\right)-g\left(x\right)\geq \left\langle \mathcal{A}\left(y\right),z\right\rangle -\left\langle \mathcal{A}\left(y\right),x\right\rangle -C\Vert z-x\Vert^\gamma,
\]
so $\mathcal{A}\left(y\right)\in\partial_{\gamma}\theta_{1}\left(x\right)$.
Let $\varphi: H\to (-\infty,+\infty],\  \varphi=\theta_{1}-I +C\Vert\cdot-f(y)\Vert^\gamma $, then $x$ is a minimizer of $\arg\min_{z\in H}D\left(z,y\right)+\varphi\left(z\right)$.

The second statement can be proved analogously as in Corollary \ref{cor: lsc prox relation general} using $\partial_{\gamma} g(x) \cap \partial_{\gamma}(\varphi+I)(x)\neq \emptyset$

The third and fourth statements can be proved by following the lines of the proof of Theorem \ref{thm: prox relation general} and Corollary \ref{cor: lsc prox relation general} with the necessary changes mentioned in the proof of the first two statements above.
\end{proof}
Observe that the mapping $f$ which appears in Corollary \ref{cor: paraconvex prox relation general} plays the same role as the proximity operator in Theorem \ref{thm: prox relation general}, that is why we keep the same terminology.


Recently in \cite{bednarczuk2025proximal}, by exploiting the structure of the $\Phi_{lsc}^\mathbb{R}$-functions, we introduce $\Phi_{lsc}^\mathbb{R}$-proximal operator which mimics the classical proximal operator in convex analysis. 
In the example below, by using Corollary \ref{cor: lsc prox relation general}, we characterize $\Phi_{lsc}^\mathbb{R}$-proximity operator.

\begin{example}
\label{exm1}
\cite[Formula 22]{bednarczuk2025proximal} The $\Phi_{lsc}^{\mathbb{R}}$-proximal operator of $\varphi:H\to(-\infty,+\infty]$
at $y\in H$ has the form: 
\begin{equation}
\label{eq: lsc prox operator form}
\mathrm{prox_{\varphi}^{lsc} } (y):= \{ x\in\left(\mathcal{J}+\partial_{\Phi}\varphi\right)^{-1}\mathcal{J}\left(y\right)\},
\end{equation}
where $\mathcal{J}:=\partial_{\Phi}\left(\frac{1}{2}\Vert\cdot\Vert^{2}\right)$ and the inverse image of $\mathcal{J}(y)$ under $\left(\mathcal{J}+\partial_{\Phi}\varphi\right)$ is understood as follows.
\begin{equation}
\label{eq: lsc prox case 2}
(\exists x\in H) \quad \left(\mathcal{J}+\partial_{\Phi}\varphi\right)\left(x\right)\cap \mathcal{J}\left(y\right)\neq \emptyset.
\end{equation}
For more on this topic, the readers are referred to \cite[Chapter VI.1]{berge1877topological}.
Then there exists $\phi_{0}\in\mathcal{J}\left(y\right)$
such that 
\[
\phi_{0}\in\partial_{\Phi}\left(\varphi+\frac{1}{2}\Vert\cdot\Vert^{2}\right)\left(x\right).
\]
By definition of $\Phi_{lsc}^\mathbb{R}$-subdifferentials, we obtain 
\[
\left(\forall z\in H\right)\qquad\varphi\left(z\right)+\frac{1}{2}\Vert z\Vert^{2}-\phi_{0}\left(z\right)\geq\varphi\left(x\right)+\frac{1}{2}\Vert x\Vert^{2}-\phi_{0}\left(x\right),
\]
so $x$ is the global minimizer of $\varphi\left(z\right)+D(z,y)$ where 
\begin{align}
D\left(z,y\right)& =I\left(z\right)-\left\langle \mathcal{A}\left(y\right),z\right\rangle _{\Phi}+J\left(\phi_{0}\right), \label{eq: coupling D lsc}\\
I\left(z\right) & =\frac{1}{2}\Vert z\Vert^{2}, \quad
\mathcal{A}(y) = \phi_0,
\nonumber
\end{align}
and $J\equiv 0$.


Conversely, let $\phi_0\in \mathcal{J}(x_0)$ and $x=f(x_0)\in H$, if 
\[
x\in \arg\min_{z\in H} \varphi(z) +D(z,x_0),
\]
then $\phi_0 \in \partial_{\Phi} (\varphi+\frac{1}{2}\Vert\cdot\Vert^2)(x)$ and so 
\[
\left(\mathcal{J}+\partial_{\Phi}\varphi\right)\left(x\right)\cap \mathcal{J}\left(x_0\right)\neq \emptyset.
\]
Therefore, $x=f(x_0)$ is a $\Phi_{lsc}^\mathbb{R}$-proximal point of $\varphi$ at $x_0$.
\end{example}
From the above example, we define $f:H\to H$ to be a selector of $\Phi_{lsc}^\mathbb{R}$-proximity operator of $\varphi$ \eqref{eq: lsc prox operator form} i.e. for $y\in H, x=f(y)$ if and only if $x\in \mathrm{prox}_{\varphi}^{lsc} (y)$.
Then $\Phi_{lsc}^\mathbb{R}$-proximity operator as defined in Example \ref{exm1} can be equivalently expressed with the help of Theorem \ref{thm: prox relation general} as shown in the corollary below.

Before that, let us define the inverse image of $f$ at $x$ is defined as $f^{-1}(x)=\{y\in \mathrm{dom }f: f(y) =x\}$ and denote $\mathcal{J}(x)= \partial_\Phi (\frac{1}{2}\Vert\cdot\Vert^2)(x)$ for any $x\in H$.
\begin{corollary}
\label{cor: lsc prox relation}
Let $H$ be a Hilbert space and $f:H\to H$ be a mapping. Let $J:\Phi_{lsc}^\mathbb{R}\to(-\infty,+\infty],\varphi :H\to(-\infty,+\infty]$ be given functions.
Consider $x_0\in H$.  
The following are equivalent.
\begin{enumerate}[label=\roman*.]
    \item $f$ is a selector of $\Phi_{lsc}^\mathbb{R}$-proximity operator of $\varphi$ at $x_0$. 
    \item There exists a $\Phi_{lsc}^\mathbb{R}$-convex $g:H\to (-\infty,+\infty]$ such that the intersection $\partial_{\Phi} g(x) \cap \mathcal{J}(f^{-1}(x))$ is non-empty for any $x\in \mathrm{Im} f$.
    \item $f(x_0)$ is a global minimizer of  $\varphi(z) +D(z,x_0)$ where $D(z,x_0)$ is defined in \eqref{eq: coupling D lsc}.
    \item Let $E\subset\mathrm{Im}f$ be polygonally connected. There exists a constant $K\in\mathbb{R}$ such that 
\[
g\left(z\right)=\frac{1}{2}\Vert z\Vert^2+\varphi\left(z\right)+K,
\]
for each $z\in E$.
\end{enumerate}
\end{corollary}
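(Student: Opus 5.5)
The plan is to derive all four items from Theorem \ref{thm: prox relation general}-1 and Corollary \ref{cor: lsc prox relation general}. We specialize to $\Phi=\Phi_{lsc}^\mathbb{R}$, $I=\frac{1}{2}\Vert\cdot\Vert^2$, $J\equiv 0$, and the coupling $\mathcal{A}(y)=\phi_0$ with $\phi_0\in\mathcal{J}(y)$, as in \eqref{eq: coupling D lsc}. Since $\mathcal{J}(y)$ may contain several elements, $\mathcal{A}$ has to be read as a selection $y\mapsto\phi_0(y)\in\mathcal{J}(y)$. For item (ii) we then use the weaker requirement that $\partial_\Phi g(x)$ merely meets $\mathcal{J}(f^{-1}(x))$, instead of containing the full image $\mathcal{A}(f^{-1}(x))$. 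We go around the cycle (i)$\Leftrightarrow$(iii)$\Leftrightarrow$(ii) and then obtain (iv) from part 2 of Corollary \ref{cor: lsc prox relation general}.

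\textbf{Step 1: (i)$\Leftrightarrow$(iii).} This is exactly the computation of Example \ref{exm1}. If $f(x_0)\in\mathrm{prox}^{lsc}_\varphi(x_0)$, then \eqref{eq: lsc prox case 2} gives some $\phi_0\in\mathcal{J}(x_0)$ with $\phi_0\in\partial_\Phi(\varphi+\frac12\Vert\cdot\Vert^2)(f(x_0))$. Here we use that $\partial_\Phi\varphi+\mathcal{J}\subset\partial_\Phi(\varphi+\frac12\Vert\cdot\Vert^2)$, which follows by adding the two defining inequalities. Unfolding the definition of the $\Phi$-subgradient shows that $f(x_0)$ minimizes $\varphi+\frac12\Vert\cdot\Vert^2-\phi_0$, i.e. $\varphi+D(\cdot,x_0)$. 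Conversely, minimality gives $\phi_0\in\partial_\Phi(\varphi+\frac12\Vert\cdot\Vert^2)(f(x_0))$. To recover the intersection \eqref{eq: lsc prox case 2} we must split $\phi_0=\phi_1+(\phi_0-\phi_1)$ with $\phi_1\in\mathcal{J}(f(x_0))$ and $\phi_0-\phi_1\in\partial_\Phi\varphi(f(x_0))$.

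Quadratic functions are closed under subtraction, and $\mathcal{J}(x)$ is explicit: it contains $\phi(z)=-a\Vert z\Vert^2+\langle(1+2a)x,z\rangle+c$ for every $a\ge -\frac12$. In particular, taking $\phi_1=\frac12\Vert\cdot\Vert^2$ (so $a=-\frac12$) makes $\phi_0-\phi_1\in\partial_\Phi\varphi(f(x_0))$ automatic, because the $\frac12\Vert z\Vert^2$ terms cancel in the subgradient inequality. This decomposition is the step I expect to be the main obstacle. It depends on reading the sum/intersection in \eqref{eq: lsc prox case 2} the way \cite{bednarczuk2025proximal} does, and I would verify that convention first.

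\textbf{Step 2: (iii)$\Leftrightarrow$(ii).} Assume (iii) holds at every point of $\mathrm{dom} f$, which is the sense in which $f$ is called a selector. Then Theorem \ref{thm: prox relation general}-1 (i)$\Rightarrow$(ii) with $\theta=\frac12\Vert\cdot\Vert^2+\varphi+\iota_{\mathrm{Im}f}$ yields a $\Phi_{lsc}^\mathbb{R}$-convex $g$ with $\phi_0(y)\in\partial_\Phi g(x)$ for every $y\in f^{-1}(x)$. Since $\phi_0(y)\in\mathcal{J}(y)\subset\mathcal{J}(f^{-1}(x))$, the intersection in (ii) is nonempty. Conversely, given (ii), for $x=f(y)$ pick $\phi_0\in\partial_\Phi g(x)\cap\mathcal{J}(y')$ with $y'\in f^{-1}(x)$. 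Define the selection $\mathcal{A}$ on $f^{-1}(x)$ accordingly, and run the converse part of Theorem \ref{thm: prox relation general}-1 with $\varphi=g+\iota_{\mathrm{Im}f}-\frac12\Vert\cdot\Vert^2$; this gives (iii).

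\textbf{Step 3: (iv).} Once $\varphi$ and $g$ satisfy the equivalence, $\partial_\Phi g(x)\cap\partial_\Phi(\varphi+\frac12\Vert\cdot\Vert^2)(x)\neq\emptyset$ on $\mathrm{Im}f$. Lemma \ref{lem: lsc conv poly-connected}, invoked through Corollary \ref{cor: lsc prox relation general}-2 with $I=\frac12\Vert\cdot\Vert^2$, then gives the constant $K$ on any polygonally connected $E\subset\mathrm{Im}f$.
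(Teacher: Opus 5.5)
Your Step 1 is correct and follows Example~\ref{exm1}. It is actually more complete than the paper: the split $\phi_0=\frac12\Vert\cdot\Vert^2+(\phi_0-\frac12\Vert\cdot\Vert^2)$, using $\frac12\Vert\cdot\Vert^2\in\mathcal{J}(x)$ for every $x$, is exactly what Example~\ref{exm1} leaves implicit when it passes from $\phi_0\in\partial_\Phi(\varphi+\frac12\Vert\cdot\Vert^2)(x)$ back to $(\mathcal{J}+\partial_\Phi\varphi)(x)\cap\mathcal{J}(x_0)\neq\emptyset$.

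The gap is in the converse half of Step 2. In the corollary $\varphi$ is \emph{given}, but (ii) never mentions $\varphi$. In the weakened form you adopt, (ii) is vacuous: by your own Step 1 observation, $g=\frac12\Vert\cdot\Vert^2$ gives $\frac12\Vert\cdot\Vert^2\in\partial_\Phi g(x)\cap\mathcal{J}(f^{-1}(x))$ for every map $f$. The converse of Theorem~\ref{thm: prox relation general}-1 only manufactures a new function $g+\iota_{\mathrm{Im}f}-\frac12\Vert\cdot\Vert^2$, because that theorem quantifies $\varphi$ existentially; it does not give the prescribed $\varphi$. In addition, your $\phi_0$ lies in $\mathcal{J}(y')$ for some $y'\in f^{-1}(x)$, whereas the coupling \eqref{eq: coupling D lsc} at $y$ requires $\phi_0\in\mathcal{J}(y)$.

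This cannot be repaired. Take $H=\mathbb{R}$, $\varphi(z)=z^2$, $f\equiv 1$, $x_0=0$. Then (ii) holds with $g=\frac12 z^2$. But every $\phi_0\in\mathcal{J}(0)$ has the form $-a_0z^2+c$ with $a_0\ge-\frac12$, so $\varphi+D(\cdot,0)=(\frac32+a_0)z^2-c$ has the unique minimizer $0\neq f(0)$. Hence (iii), and equivalently (i), fails.

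Step 3 has the same defect. Lemma~\ref{lem: lsc conv poly-connected} needs a single element of $\partial_\Phi g(x)\cap\partial_\Phi(\varphi+\frac12\Vert\cdot\Vert^2)(x)$. That element is available when $\partial_\Phi g(f(y))$ contains the same $\phi_0(y)$ used in $D(\cdot,y)$, which is the inclusion form in Corollary~\ref{cor: lsc prox relation general}. It is not available for an arbitrary $g$ whose subdifferential merely meets $\mathcal{J}(f^{-1}(x))$. For example, with $f=\mathrm{Id}$, $\varphi(z)=z^2$, $x_0=0$, items (i)--(iii) all hold with $g=\frac12 z^2$, yet $g-\frac12 z^2-\varphi=-z^2$ is not constant on $E=\mathbb{R}$.

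The paper gives no separate proof and presents the corollary as a consequence of Example~\ref{exm1} and Theorem~\ref{thm: prox relation general}, i.e.\ along your route. So the defect lies in the statement itself, not in your choice of approach. What your argument does establish is the following:
\begin{itemize}
\item (i)$\Leftrightarrow$(iii) at $x_0$.
\item If (iii) holds at every $y\in H$, then (ii) and (iv) (with $K=0$) hold for $g=\frac12\Vert\cdot\Vert^2+\varphi+\iota_{\mathrm{Im}f}$, which is $\Phi_{lsc}^\mathbb{R}$-convex on $\mathrm{Im}f$.
\item (ii)$\Rightarrow$(iii) holds only in the form ``for some $\varphi$'', as in Theorem~\ref{thm: prox relation general}.
\end{itemize}
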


\begin{remark}
The equivalence between $\Phi_{lsc}^\mathbb{R}$-subgradient and the proximal subgradient ($\gamma=2$) (see \cite[Proposition 3.12]{syga2019global}) implies that the results in Corollary \ref{cor: lsc prox relation} still hold true for proximal subdifferentials.
\end{remark}

In the example below, we apply Corollary \ref{cor: lsc prox relation}.
\begin{example}
\label{ex2:-}
\begin{itemize}
    \item Let $H$ be a Hilbert space and $\Phi=\Phi_{lsc}^\mathbb{R}$, consider the function $\varphi\left(x\right)=\Vert x\Vert^{2}$ and $x_{0}\in H$. 
    For $\phi_{0}\in\mathcal{J}(x_0) =\partial_{\Phi}\left(\frac{1}{2}\Vert x_0\Vert^{2}\right)$,
the mapping $f:H\to H$ define as $x=f(x_0)$, where $x$ satisfies \eqref{eq: lsc prox operator form}. Then by \cite[Theorem 4]{bednarczuk2025proximal}, $x$ is a solution of the minimization problem 
\[
\min_{x\in H} \varphi(x) +(\frac{1}{2}+a_0)\Vert x-x_0\Vert^2 = \min_{x\in H}\Vert x\Vert^2 +(\frac{1}{2}+a_0)\Vert x-x_0\Vert^2.
\]
Solving the above problem, we obtain
\[
f\left(x_0\right)=\frac{1+2a_{0}}{3+2a_{0}}x_{0},\quad \text{ for some } \phi_{0}=\left(a_{0},\left(1+2a_{0}\right)x_{0}\right), a_0\geq -\frac{1}{2}.
\]
Notice that $3+2a_{0}>0$ for any $\phi_{0}\in\mathcal{J}\left(x_{0}\right)$.
Moreover, using the formula of $x=f\left(x_0\right)$, $x$ is the global minimizer of the problem 
\begin{equation}
\label{eq: lsc prox standard form}
\varphi\left(z\right)+D(z,x_0)
=\frac{3+2a_0}{2}\Vert z - \frac{1+2a_0}{3+2a_0}x_0\Vert^2,
\end{equation}
where $D(z,x_0)$ is defined as in \eqref{eq: coupling D lsc}.
with $I(z) = \frac{1}{2}\Vert z\Vert^2, \mathcal{A}(x_0)=\phi_0, J(x_0) = \left(\frac{1+2a_0}{3+2a_0}\right)^2 \Vert x_0\Vert^2.$
    \item For $H=\mathbb{R}, \Phi=\Phi_{lsc}^\mathbb{R}$, $h\left(x\right)=\left|x\right|$, we calculate 
\[
\partial_{\Phi}h\left(x\right)=\begin{cases}
a\geq0,u=2ax+1 & x>0\\
a\geq0,u=2ax-1 & x<0\\
a\geq0,-1\leq u\leq1 & x=0
\end{cases}.
\]
With $\mathcal{J}\left(x\right):=\partial_{\Phi}h \left(x\right)$
and $\varphi\left(x\right)= x^{2}$, the proximity operator
$f:H\to \mathbb{R}$ of $\varphi$ at $x_{0}$ has the
form 
\[
f\left(x_0\right)=\begin{cases}
\frac{2a_{0}x_{0}}{1+2a_{0}} & x_{0}\neq0\\
0 & x_0=0.
\end{cases}, \quad \text{ for some } \phi_0\in \mathcal{J}(x_0).
\]

\item We consider $a>-1/2$, and the problem of minimizing the function $\varphi+D\left(\cdot,\phi_{0}\right)$ over $H$ where 
\begin{align*}
D\left(x,\phi_0\right) & =I\left(x\right)-\phi_{0}\left(x\right)+J\left(\phi_{0}\right),
\end{align*}
with $I\left(x\right)  =\left(\frac{1}{2}+a\right)\Vert Mx\Vert^{2}-a\Vert x\Vert^{2},
\phi_{0}  =\left(a,\left(1+2a\right)M^{*}y\right),
J\left(\phi_{0}\right)  =\left(\frac{1}{2}+a\right)\Vert y\Vert^{2}$ and $M:H\to H$ is a linear operator.
Hence, this problem is equivalent to the regularized problem $\min_{x\in H}\varphi(x) +(\frac{1}{2}+a)\Vert Mx -y\Vert^2$.
A solution $x$ to this problem is determined by a proximal mapping $f$, $x=f(\phi_0)$ i.e. according to Theorem \ref{thm: prox relation general}-1(i), $f(\phi_0) \in \argmin\ \varphi(\cdot) + D(\cdot,\phi_0)$.
This is a common model in inverse problems which has a wide range of applications, especially in image processing and machine learning.

\end{itemize}

\end{example}

\begin{example}
We demonstrate the flexibility of Corollary \ref{cor: paraconvex prox relation general} when applied to $\gamma$-paraconvex function. As in Example 3.15 \cite[Example 3.15]{rahimi2024projected}, consider the function $\varphi:\mathbb{R}\to\mathbb{R}$,
\[
\varphi (x) = \begin{cases}
1-|x|^\gamma & -1 \leq x\leq 1\\
x^2-1 & \text{otherwise}
\end{cases},
\]
where $1<\gamma <2$. 
According to Definition \ref{def:paraconvex}, this is a $\gamma$-paraconvex function. Taking $x_0\in \mathbb{R}$, we try to find the global minimizer of $\varphi(x)+(x-x_0)^2$ or $\varphi(x)+|x-x_0|^\gamma$ (as in classical proximal operator), which is impossible to compute explicitly. With Corollary \ref{cor: paraconvex prox relation general}, by choosing 
$I(x)=\begin{cases}
|x|^\gamma & -1\leq x\leq 1\\
x^2 & \text{Otherwise},
\end{cases}$ and $J\equiv 0$, 
the proximal operator of $\varphi$ at $x_0$, $f:\mathbb{R}\to\mathbb{R}$, is the global minimizer of $\varphi(x)+D(x,x_0)$ which has an explicit form as
\[
f(x_0) = \begin{cases}
1 & 0\leq x_0 \leq 4\\
-1 & -4 \leq x_0 \leq 0\\
\frac{x_0}{4} & \text{Otherwise}
\end{cases}.
\]
Then $x_0 \in \partial_\gamma g(x)$, where $g(z) = \varphi(z) +I(z)$ by Corollary \ref{cor: paraconvex prox relation general}, $x\in \mathrm{Im }f = (-\infty,-1]\cup [1,+\infty]$. 
\end{example}

\begin{example}
In this example, we apply Corollary \ref{cor: lsc prox relation} to construct function $\psi$ when $f$ and $\varphi$ are known. Let us consider the hard-thresholding function on the real line
\[
H_{\lambda}(x)=\begin{cases}
x & \text{if }|x|>\lambda\\
0 & \text{if }|x|\leq\lambda
\end{cases},
\]
where $\lambda>0$ is a parameter. Then for $\gamma>0,x_0\in \mathbb{R}$, the classical proximal operator \eqref{eq: classic prox} of $H_\lambda$ is a global minimizer of $H_\lambda(x)+\frac{1}{2\gamma}\Vert x-x_0\Vert^2$ which we denote $f(x_0)$. Depending on the relation between $\gamma$ and $\lambda$, we obtain the following cases:
\begin{itemize}
    \item $\gamma\leq2\lambda$: 
    \[
    f(x_0)=\begin{cases}
x_{0}-\gamma & x_{0}<-\lambda+\gamma \text{ or } x_{0}>\lambda+\gamma+\sqrt{2\lambda\gamma}\\
x_{0} & -\lambda+\gamma\leq x_{0}\leq\lambda\\
\lambda & \lambda<x_{0}\leq\lambda+\gamma+\sqrt{2\lambda\gamma}
\end{cases}.
    \]
\item $\gamma>2\lambda$: 
\[
f(x_0)=\begin{cases}
x_{0}-\gamma & x_{0}<\lambda+\gamma-\sqrt{2\lambda\gamma} \text{ or } x_{0}>\lambda+\gamma+\sqrt{2\lambda\gamma}\\
\lambda & \lambda+\gamma-\sqrt{2\lambda\gamma}<x_{0}\leq\lambda+\gamma+\sqrt{2\lambda\gamma}\\
\end{cases}.
\]
\end{itemize}
The function $f(x_0)$ is discontinuous. For simplicity, let us take $\gamma=3,\lambda=1$, we reconstruct the function $\psi: E\to \mathbb{R}$, where $E\subset \mathrm{Im }f$ is polygonally connected, 
\begin{align}
    \psi (y) & = y f(y) -\frac{1}{2}\Vert f(y)\Vert^2 -H_\lambda (f(y)) \nonumber\\
    & = \begin{cases}
\frac{y^{2}}{6}+\frac{3}{2}-y & y<4-\sqrt{6} \text{ or } y>4+\sqrt{6} \nonumber\\
\frac{y}{3}-\frac{1}{6} & \text{ otherwise}
\end{cases},
\end{align}
where $\mathbb{B}=\mathrm{Id},I(x) = J(x)=\frac{1}{2}x^2$ and $\varphi = H_\lambda$ as in Corollary \ref{cor: paraconvex prox relation general}-4 with $G=0, \gamma=2$.
Observe that $\psi$ is a convex continuous function on $\mathbb{R}$.

\end{example}
\subsection{Some variants to the function $D(\cdot )$}
We end this section with some remarks.
We can employ $\Phi$-proximity operator as a tool to find minimizers of function $\varphi$ mentioned in Theorem \ref{thm: prox relation general}. 
To do this, we need to assume
\begin{itemize}
    \item $D(x,\phi)\geq 0$ for all $x\in H,\phi\in \Phi$. This guarantees that $\min \varphi \leq \min \varphi +D(\cdot,\phi)$.
    \item For every $x_0\in H$, there exists $\phi$ such that $D(x_0,\phi) = 0$. Then $\min_{x\in H} \varphi(x) +D(x,\phi) \leq \varphi(x_0)$. This helps us to express the envelope $\min \varphi +D(\cdot,\phi)$ as a function of $x_0$.
\end{itemize}
Combining these assumptions, we see that $\varphi+D(\cdot,\phi)$ and $\varphi$ have the same optimal values and minimizers for some $\phi\in\Phi$. 
An example of such $D(x,\phi)$ is to use $\phi \in \partial_\Phi I(x_0)$ for $x_0\in H$ and set 
\begin{equation}
\label{eq: phi-breman our}
D(x,\phi,x_0) := (I(x) -\phi(x)) - (I(x_0)-\phi(x_0)).
\end{equation}
This definition coincides with the one in \cite{millan2024global}, where the authors construct $D(x,\phi)$ in the same way and call it abstract Bregman divergence
\begin{equation}
\label{eq: phi-bregmann}
    D_\phi^\lambda \left(x,y\right)=\phi(x)-\phi(y) - [\lambda(x)-\lambda(y)],
\end{equation}
where $\lambda\in \partial_\Phi \phi(y)$. In \cite{millan2024global}, the $\Phi$-proximity operator is defined based on $D_\phi^\lambda$.
In fact, when $\Phi=\Phi_{lsc}^\mathbb{R}$, the definition of $\Phi$-proximity operator in \cite{millan2024global} aligns with our definition in \cite{bednarczuk2025proximal}. For example, with $I(x) = \frac{1}{2}\Vert x\Vert^2,\phi\in \partial_{\Phi}I(x_0)$, we get 
\begin{align*}
D(x,\phi,x_0) &= (I(x) -\phi(x)) - (I(x_0)-\phi(x_0)) \\
& =(\frac{1}{2}\Vert x\Vert^2 -\phi(x)) - (\frac{1}{2}\Vert x\Vert^2-\phi(x_0)) = (\frac{1}{2}+a_0)\Vert x-x_0\Vert^2,
\end{align*}
where the formula of $\phi$ is given in Example \ref{exm1}. Hence, $\arg\min \varphi +D(x,\phi,x_0)$ is exactly the classical proximal operator as defined by \eqref{eq: classic prox} of $\varphi$ at $x_0$ with parameter $a_0\geq-1/2$ when $\varphi$ is convex.

Moreover, Theorem \ref{thm: prox relation general} allows us to extend the definition of the proximity operator using a more general nonconvex Bregman distance. 
For example, one can use Descent Lemma \cite[Lemma 2.64]{Bau2011} (for any class of differential functions with Lipschitz continuous gradient) and define the distance
\begin{align*}
\hat{D}(x,\phi,x_0) & = I(x)-I(x_0) - \langle\nabla I(x),x-x_0\rangle +\frac{L_I}{2}\Vert x-x_0\Vert^2\\
& = I(x)-I(x_0) -[\phi(x)-\phi(x_0)]\geq 0,
\end{align*}
where $I:H\to (-\infty,+\infty]$ is differentiable with Lipschitz continuous gradient and $\phi(z) = \frac{L_I}{2}\Vert z\Vert^2 +\langle\nabla I(x)-L_I x,z\rangle$ with $L_I>0$ is the Lipschitz constant of $\nabla I$.

\section{Continuity of proximity operator}
\label{sec: smooth prox}

Let us consider the assumption in Theorem \ref{thm: prox relation general} with $\Phi=H$ and $\mathcal{A}=\mathcal{B}=\mathrm{Id}$, then for $y\in H, D(x,y)=\tilde{D}(x,y)$ and we have the equivalence between all statements in Theorem \ref{thm: prox relation general}.
\begin{corollary}
\label{cor: smooth prox paraconvex}
Let $H$ be Hilbert and $\Phi=H,\mathcal{A}=\mathcal{B}=\mathrm{Id}$, consider $f:H\to H$ and $I,J:H\to (-\infty,+\infty]$ to be Fr\'echet differentiable with the Fr\'echet derivative $\nabla\cdot$. 
Under these setting, let $\varphi,g,\psi$ satisfy the equivalences in Theorem \ref{thm: prox relation general} with $\gamma$-subdifferentials for $2\geq \gamma>1$, and let $k\in\mathbb{N}$. 
\begin{enumerate}
    \item For an open set $V\subset H$, we have $f\in C^k(V) \Leftrightarrow \psi \in C^{k+1}(V)$ and $f(y)=\nabla \psi(y)$ for all $y\in V$, where $C^k(V)$ is the class of all $k$-continuously differentiable functions defined on $V$.
    \item Consider a polygonally connected set $V\subset \mathrm{Im}f$, the following are equivalent.
    \begin{enumerate}[label=\roman*.]
        \item $\varphi\in C^{k+1}(V)$.
        \item $g\in C^{k+1}(V)$.
        \item The restriction $\tilde{f}$ of $f$ to $f^{-1}(V)$ is injective and $\tilde{f}^{-1}\in C^k (V)$.
        If one of the statement 2-(i) or 2-(ii) hold, then $\tilde{f}$ is a bijection between $f^{-1}(V)$ and $V$. Furthermore, $\tilde{f}^{-1}(x) = \nabla g(x) = \nabla \varphi(x)+\nabla I(x)$ for all $x\in V$.
    \end{enumerate}
\end{enumerate}
\end{corollary}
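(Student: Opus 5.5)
The plan mirrors the argument of \cite[Corollary 6]{gribonval2020characterization}, with convex subdifferentials replaced by $\gamma$-subdifferentials. The engine throughout is Remark \ref{rmk: gamma subgrad gateaux}: whenever a function is G\^ateaux (hence Fr\'echet) differentiable at a point, its $\gamma$-subdifferential there is exactly $\{\nabla\cdot\}$. The other ingredient is the following principle: a function whose $\gamma$-subdifferential admits a continuous selection on an open set is $C^1$ there, with gradient equal to that selection.

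\textbf{Part 1.} By Corollary \ref{cor: paraconvex prox relation general}-3, with $\mathcal{B}=\mathrm{Id}$, we have $f(y)\in\partial_\gamma\psi(y)$ for every $y$.

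For ($\Leftarrow$), suppose $\psi\in C^{k+1}(V)$. Remark \ref{rmk: gamma subgrad gateaux} gives $\partial_\gamma\psi(y)=\{\nabla\psi(y)\}$, so $f=\nabla\psi\in C^k(V)$.

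For ($\Rightarrow$), suppose $f$ is continuous on $V$ and $y\in V$. I intend to show that $\psi$ is Fr\'echet differentiable at $y$ with $\nabla\psi(y)=f(y)$, using a two-sided estimate.
\begin{itemize}
\item The subgradient inequality at $y$ gives the lower bound $\psi(y+h)-\psi(y)-\langle f(y),h\rangle\geq -C\Vert h\Vert^\gamma$.
\item The subgradient inequality at $y+h$, tested at $y$, gives the upper bound $\psi(y+h)-\psi(y)\leq \langle f(y+h),h\rangle + C\Vert h\Vert^\gamma$.
\end{itemize}
Subtracting $\langle f(y),h\rangle$ from the upper bound leaves the error $\langle f(y+h)-f(y),h\rangle + C\Vert h\Vert^\gamma$. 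Since $f$ is continuous and $\gamma>1$, this error is $o(\Vert h\Vert)$.

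The main obstacle is that the constant $C$ in the definition of $\partial_\gamma$ is allowed to depend on the point. The upper bound above needs a constant that is uniform for $y+h$ near $y$. I would obtain this uniformity from Theorem \ref{thm: gamma monotone}: $\psi$ is $\gamma$-paraconvex with a single constant $C$, and a standard argument shows that for paraconvex functions the $\gamma$-subgradient inequality then holds with a constant depending only on that paraconvexity constant. Failing that, I would state this uniformity as a standing assumption, as in \cite{jourani1996subdifferentiability}.

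Once differentiability is established, $\nabla\psi=f\in C^k(V)$, and therefore $\psi\in C^{k+1}(V)$.

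\textbf{Part 2.} For (i)$\Leftrightarrow$(ii), Corollary \ref{cor: paraconvex prox relation general}-2 gives $g=I+\varphi+K$ on the polygonally connected set $V$. Since $I$ is smooth, $\varphi\in C^{k+1}(V)$ if and only if $g\in C^{k+1}(V)$.

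For (ii)$\Rightarrow$(iii), Corollary \ref{cor: paraconvex prox relation general}-1 gives $f^{-1}(x)\subset\partial_\gamma g(x)$. By Remark \ref{rmk: gamma subgrad gateaux}, $\partial_\gamma g(x)=\{\nabla g(x)\}$ for $x\in V$. Moreover $f^{-1}(x)$ is nonempty because $x\in\mathrm{Im}f$. Hence $f^{-1}(x)=\{\nabla g(x)\}$. This shows at once that $\tilde f$ is injective and, since its image is all of $V$, a bijection onto $V$, with $\tilde f^{-1}=\nabla g=\nabla\varphi+\nabla I\in C^k(V)$.

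For (iii)$\Rightarrow$(ii), the map $x\mapsto\tilde f^{-1}(x)$ is a continuous selection of $\partial_\gamma g$ on $V$. I would rerun the two-sided estimate of Part 1 with $g$ in place of $\psi$ and $\tilde f^{-1}$ in place of $f$, which gives $\nabla g=\tilde f^{-1}\in C^k$, hence $g\in C^{k+1}(V)$. The same uniform-constant issue arises here as in Part 1. In addition, if $V$ is not open, differentiability has to be understood relative to $V$, and I would only carry out the increments along segments contained in $V$, which is where polygonal connectedness is used.
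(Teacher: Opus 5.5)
Your proposal is correct and follows essentially the paper's route. Part 1 is the two-sided estimate of Lemma \ref{lem: paraconvex smooth selection} together with uniqueness of $\gamma$-subgradients at points of differentiability (Remark \ref{rmk: gamma subgrad gateaux}), and Part 2 follows \cite[Corollary 6]{gribonval2020characterization} via $g=I+\varphi+K$ from Corollary \ref{cor: paraconvex prox relation general}-2 and the same selection argument. Your treatment of the uniform constant is more careful than the paper's, which fixes $C$ silently, and the globalization you allude to does work: letting $t\to 0^+$ in the paraconvexity inequality shows every $\gamma$-subgradient satisfies the inequality with the paraconvexity constant. Note that, like the paper, your step (i)$\Leftrightarrow$(ii) tacitly needs $I\in C^{k+1}(V)$ and $V$ open, neither of which the statement provides.
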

\begin{proof}
(1) The equivalence comes from Lemma \ref{lem: paraconvex smooth selection} and Corollary \ref{cor: paraconvex prox relation general}  with $f$ as a selection of the $\gamma$-subdifferentials $\partial_{\gamma}\psi$ for $\psi$ given in Corollary \ref{cor: paraconvex prox relation general}-(3,4). 
Hence, it holds for $k=0$ and any $y\in V$. 
For $k\in\mathbb{N}, k\geq 1$, we set $\psi_k(y) = \nabla^k \psi(y)$ and $f_k(y) = \nabla^k f(y)$ where $\nabla^k \psi$ is the $k$-derivative of $\psi$. We assume that $\nabla^k f(y) = \nabla^{k+1} \psi(y)$. Then $\nabla^{k+1}f(y) = \nabla (\nabla^k f)(y) = \nabla (\nabla^{k+1} \psi)(y) = \nabla^{k+2} \psi(y)$. 
Hence, $\nabla^k f(y) = \nabla^{k+1}\psi(y)$ for any $k\in\mathbb{N}$ by induction.

(2) The proof follows the lines of the proof in \cite[Corollary 6]{gribonval2020characterization} by using Lemma \ref{lem: paraconvex smooth selection}, Lemma \ref{lem: cont selection smooth} and Corollary \ref{cor: paraconvex prox relation general}.
\end{proof}

The claims of Corollary \ref{cor: smooth prox paraconvex} relies on continuous selector of proximal subgradient and the proximal subdifferentials is unique for differentiable functions (see Remark \ref{rmk: weak phi subdiff} and \ref{rmk: gamma subgrad gateaux}). 
For $\Phi$-convex functions, $\Phi$-subdifferentials is not unique even in the differentiable case. However, when the class $\Phi$ consists of differentiable functions, we obtain the following.

\begin{proposition}
\label{prop: diff and subgrad}
Let $\varphi:H\to (-\infty,+\infty]$ be proper G\^ateaux differentiable at $x\in H$ and the class $\Phi$. If $\partial_{\Phi}\varphi (x)\neq \emptyset$, then we have $\nabla \varphi(x) = \nabla \phi (x)$ for any $\phi\in \partial_{\Phi}\varphi (x)$.
\end{proposition}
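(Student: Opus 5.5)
The plan is to show that $\varphi-\phi$ attains a global minimum at $x$ and then apply the first-order optimality condition for Gâteaux differentiable functions. The implicit assumption, following the sentence before the statement, is that every $\phi\in\Phi$ is Gâteaux differentiable at $x$. For $\Phi_{lsc}^\mathbb{R}$ this is automatic, since $\nabla\phi(x)=-2ax+u$.

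First fix $\phi\in\partial_\Phi\varphi(x)$. By the definition of $\Phi$-subgradient,
\[
(\forall y\in H)\qquad \varphi(y)-\phi(y)\geq \varphi(x)-\phi(x).
\]
This says that $x$ is a global minimizer of $h:=\varphi-\phi$. Since $x\in\mathrm{dom}\,\varphi$ and $\phi$ is real-valued, $h(x)$ is finite. Moreover, $h$ is Gâteaux differentiable at $x$, being the difference of two functions that are Gâteaux differentiable there. Gâteaux differentiability of $\varphi$ at $x$ forces $\varphi$ to be finite along every line through $x$ near $x$.

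Next I take directional derivatives. For any $d\in H$ and $t>0$, the minimality gives $h(x+td)-h(x)\geq 0$. Dividing by $t$ and letting $t\downarrow 0$ yields $\langle\nabla h(x),d\rangle\geq 0$. Replacing $d$ by $-d$ gives the reverse inequality, so $\langle \nabla h(x),d\rangle=0$ for all $d\in H$. Hence $\nabla h(x)=0$, that is, $\nabla\varphi(x)=\nabla\phi(x)$. Because $\phi$ was an arbitrary element of $\partial_\Phi\varphi(x)$, the identity holds for every such $\phi$.

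The only delicate point is the hypothesis that the elements of $\Phi$ are differentiable at $x$, since the statement's phrase ``and the class $\Phi$'' is elliptical. I would state this assumption explicitly at the start. Everything after that is the routine first-order condition; no earlier result of the paper is needed beyond the definition of $\partial_\Phi$. As a remark, I would note that this shows all $\phi\in\partial_\Phi\varphi(x)$ share the same gradient at $x$. It does not show that $\partial_\Phi\varphi(x)$ is a singleton; for $\Phi_{lsc}^\mathbb{R}$, for instance, the parameter $a$ may still vary.
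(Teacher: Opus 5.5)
Your proof is correct and follows the paper's argument: the paper also substitutes $y=x+th$ into the $\Phi$-subgradient inequality, divides by $t$, lets $t\downarrow 0$, and then replaces $h$ by $-h$. Your reformulation as the first-order condition for $\varphi-\phi$ is the same computation, and it is worth stating explicitly, as you do, that each $\phi\in\Phi$ must be G\^ateaux differentiable at $x$, since the paper leaves this implicit in the phrase ``and the class $\Phi$''.
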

\begin{proof}
Let $\varphi$ be G\^ateaux differentiable at $x$ and let $\phi\in \partial_{\Phi} \varphi (x)$. We have 
\[
(\forall y\in H) \quad \varphi(y)-\varphi (x) \geq \phi (y) -\phi (x).
\]
Let $y=x+th$ where $t>0, h\in H$, we have 
\begin{equation}
    \varphi(x+th) -\varphi(x) \geq \phi (x+th) -\phi (x).
\end{equation}
Divide both sides by $t$ and taking the limit $t\to 0_+$, we obtain 
\[
\langle \nabla \varphi(x) ,h\rangle \geq \langle \nabla \phi (x) ,h\rangle,
\]
for any $h\in H$. Replacing $h$ by $-h$, we obtain
\[
-\langle \nabla \varphi(x) ,h\rangle \geq \langle -\nabla \phi (x) ,h\rangle,
\]
which implies that $\nabla \varphi(x) = \nabla \phi (x)$. Since there is no restriction on $\phi$, this holds for any $\phi \in \partial_{\Phi} \varphi (x)$.
\end{proof}

We give an example of how to use Corollary \ref{cor: paraconvex prox relation general} and Corollary \ref{cor: smooth prox paraconvex} to reconstruct the function $\varphi$ if we know its proximity operator $f$. 
\begin{example}
Consider $y\in\mathbb{R}$ and $f\left(y\right)=y^{2/3}$, then for $\psi\left(y\right)$ such that $\nabla\psi\left(y\right)=f\left(y\right)$ (by Corollary \ref{cor: smooth prox paraconvex}-1), we have $\psi\left(y\right)=\frac{3}{5}y^{5/3}$. 
Let $I\left(x\right)=\frac{2}{3}x^{5/3}$, from Corollary \ref{cor: paraconvex prox relation general}-4, we reconstruct $\varphi\left(x\right)$ in the form 
\begin{align*}
\varphi\left(f\left(y\right)\right) & =yf\left(y\right)-I\left(f\left(y\right)\right)-\psi\left(y\right)\\
 & =y^{5/3}-\frac{2}{3}y^{10/9}-\frac{3}{5}y^{5/3} =\frac{2}{5}y^{5/3}-\frac{2}{3}y^{10/9}.
\end{align*}
Then, we recover $\varphi\left(x\right)$ with 
\[
\varphi\left(x\right)=\frac{2}{5}|x|^{5/2}-\frac{2}{3}x^{5/3}.
\]
For $y\in\mathbb{R}$, the problem $\varphi(x)+I(x)-x y = \frac{2}{5}|x|^{5/2}-xy$ is convex and has a global solution $x=y^{2/3}$.
\end{example}

\section{Auxiliary results}
\label{sec: supporting proof}
In this section, we present several properties of $\Phi_{lsc}^\mathbb{R}$-convex functions and $\gamma$-paraconvex functions. These results are essential for the proof of the main result in Section \ref{sec: prox main result}.

We provide an extension to the results of \cite[Corollary 9]{gribonval2020characterization}.

\begin{lemma}
\label{lem: diff const wc}
\begin{itemize}
    \item Consider $a_{1},a_{2}:\mathbb{R}\to(-\infty,+\infty]$ to be $\rho_{1}\left(\rho_{2}\right)$-weakly convex such that $\mathrm{dom}a_{i}=\mathrm{dom}\ \partial_{(2,\rho_{i})} a_{i}=\left[0,1\right]$ and $\partial_{(2,\rho_{1})}a_{1}\left(t\right)\cap\partial_{(2,\rho_{2})}a_{2}\left(t\right)\neq\emptyset$ on $\left[0,1\right]$. Then there exists a constant $K\in\mathbb{R}$ such that $a_{1}\left(t\right)-a_{2}\left(t\right)=K$ on $\left[0,1\right]$.
    \item Let $\theta_{1},\theta_{2}:H\to(-\infty,+\infty]$ be proper and $E\subset H$ a non-empty polygonally connected set. 
Assume that $\rho_{1},\rho_{2}>0$, for each $z\in E,\partial_{(2,\rho_{1})}\theta_{1}\left(z\right)\cap\partial_{(2,\rho_{2})}\theta_{2}\left(z\right)\neq\emptyset$; then there exists a constant $K\in\mathbb{R}$ such that $\theta_{1}\left(x\right)-\theta_{2}\left(x\right)=K$ for all $x\in E$. 
\end{itemize}

\end{lemma}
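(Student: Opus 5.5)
The plan follows \cite[Corollary 9]{gribonval2020characterization}: first handle the one-dimensional case, then pass to polygonally connected sets along segments.

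\textbf{First item.} Set $\hat a_i(t):=a_i(t)+\frac{\rho_i}{2}t^2$ on $[0,1]$ (and $+\infty$ outside). For $u\in\partial_{(2,\rho_i)}a_i(t)$ we have, for all $s$,
\[
a_i(s)-a_i(t)\geq u(s-t)-\tfrac{\rho_i}{2}(s-t)^2 .
\]
This rewrites as $\hat a_i(s)-\hat a_i(t)\geq (u+\rho_i t)(s-t)$. Hence $u+\rho_i t\in\partial\hat a_i(t)$ in the convex sense. Since every point of $[0,1]$ carries such a subgradient, $\hat a_i$ is convex on $[0,1]$; it is the supremum of affine minorants that are exact at each point. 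Convex functions on an interval are locally Lipschitz on $(0,1)$ and differentiable almost everywhere. Consequently $a_1$ and $a_2$ are differentiable a.e. on $(0,1)$, with $a_i'(t)=u$ for the unique proximal subgradient $u$ at such $t$.

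At almost every $t$ both functions are differentiable, and the nonempty intersection $\partial_{(2,\rho_1)}a_1(t)\cap\partial_{(2,\rho_2)}a_2(t)$ then forces $a_1'(t)=a_2'(t)$. Indeed, at a point of differentiability each proximal subdifferential is the singleton $\{a_i'(t)\}$ by Remark~\ref{rmk: gamma subgrad gateaux}. Thus $a_1-a_2=(\hat a_1-\hat a_2)+\frac{\rho_2-\rho_1}{2}t^2$ is absolutely continuous on every $[\epsilon,1-\epsilon]$ with zero a.e. derivative, so it is constant on $(0,1)$.

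\textbf{Main obstacle: the endpoints.} Here I would use that a finite convex function on $[0,1]$ which is subdifferentiable at $0$ (resp. $1$) is continuous there along the segment. From $\hat a_i(s)\ge \hat a_i(0)+v s$ we get $\liminf_{s\to0}\hat a_i(s)\ge\hat a_i(0)$. Convexity gives $\hat a_i(s)\le (1-s)\hat a_i(0)+s\hat a_i(1)$, hence $\limsup_{s\to0}\hat a_i(s)\le\hat a_i(0)$. This extends the constant to all of $[0,1]$.

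\textbf{Second item.} Take $x,y\in E$ joined by a polygonal path $x_0=x,\dots,x_n=y$ lying in $E$. It suffices to show $\theta_1-\theta_2$ is constant on each segment $[x_j,x_{j+1}]$. Define $a_i(t):=\theta_i(x_j+t(x_{j+1}-x_j))$ for $t\in[0,1]$ and $+\infty$ otherwise.

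For $z=x_j+td\in E$ with $d=x_{j+1}-x_j$, pick $w$ in the common intersection at $z$. Restricting the defining inequality to the line gives
\[
a_i(s)-a_i(t)\ge\langle w,d\rangle(s-t)-\tfrac{\rho_i\Vert d\Vert^2}{2}(s-t)^2 .
\]
Thus $\langle w,d\rangle\in\partial_{(2,\rho_i\Vert d\Vert^2)}a_i(t)\cap\partial_{(2,\rho_2\Vert d\Vert^2)}a_2(t)$ (with the $i$-th constant in each factor). Finiteness of $\theta_i$ on $E$ follows from nonemptiness of the subdifferentials. Since $E$ contains the segment, $\mathrm{dom}\,a_i=\mathrm{dom}\,\partial a_i=[0,1]$ as required.

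The first item then gives a constant on each segment. Consecutive segments share endpoints, so the constants agree along the path. Hence $\theta_1(x)-\theta_2(x)=\theta_1(y)-\theta_2(y)$ for all $x,y\in E$, which yields the constant $K$.
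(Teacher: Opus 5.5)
Your proof is correct, but in the first item you take a longer route than the paper.

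\textbf{What the paper does.} It assumes without loss of generality that $\rho_1\le\rho_2$ and adds the \emph{same} quadratic $\frac{\rho_2}{2}t^2$ to both functions. A $\rho_1$-proximal subgradient is also a $\rho_2$-proximal subgradient. So a common element $u$ of $\partial_{(2,\rho_1)}a_1(t)\cap\partial_{(2,\rho_2)}a_2(t)$ becomes a common convex subgradient $u+\rho_2 t$ of $\bar a_1$ and $\bar a_2$. Since $\bar a_1-\bar a_2=a_1-a_2$, the claim follows at once from \cite[Lemma 4]{gribonval2020characterization}.

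\textbf{What you do instead.} You shift each $a_i$ by its own $\frac{\rho_i}{2}t^2$. The resulting convex subgradients $u+\rho_1 t$ and $u+\rho_2 t$ no longer coincide, so you cannot quote that lemma and effectively reprove it:
\begin{itemize}
\item a.e.\ differentiability of the convexified functions;
\item identification of the unique proximal subgradient with $a_i'(t)$ at points of differentiability;
\item absolute continuity of $a_1-a_2$ on $[\epsilon,1-\epsilon]$ with zero a.e.\ derivative;
\item one-sided continuity at $0$ and $1$, from the subgradient inequality (liminf) together with convexity (limsup).
\end{itemize}
All of these steps are sound. The endpoint argument is exactly where the hypothesis $\mathrm{dom}\,\partial_{(2,\rho_i)}a_i=[0,1]$ is used.

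\textbf{Comparison.} Your version is self-contained and does not even need weak convexity as a hypothesis. It is essentially the argument the paper itself uses later for the paraconvex Lemma~\ref{lem: paraconv subdiff real line}. The common-shift trick instead buys a two-line reduction to an existing result.

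Your second item is the standard segment-by-segment reduction behind \cite[Corollary 9]{gribonval2020characterization}, which the paper simply cites. There is one typo: the displayed intersection should read $\partial_{(2,\rho_1\Vert d\Vert^2)}a_1(t)\cap\partial_{(2,\rho_2\Vert d\Vert^2)}a_2(t)$.
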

\begin{proof}
We only prove the first statement. The second statement can be deduced from \cite[Corollary 9]{gribonval2020characterization}.
Without loss of generality, we assume that $\rho_{1}\leq\rho_{2}$, then $\bar{a}_{1}:=a_{1}+\frac{\rho_{2}}{2}\Vert\cdot\Vert^{2}$ is convex on $\left[0,1\right]$. Set $\bar{a_{2}}=a_{2}+\frac{\rho_{2}}{2}\Vert\cdot\Vert^{2}$,
we need to prove that $\partial\bar{a}_{1}\left(t\right)\cap\partial\bar{a}_{2}\left(t\right)\neq\emptyset$ on $\left[0,1\right]$. 
By assumption, let $u\in\partial_{(2,\rho_{1})}a_{1}\left(t\right)\cap\partial_{(2,\rho_{2})}a_{2}\left(t\right)$, for any $y\in\mathbb{R}$, we have
\begin{align*}
a_{1}\left(y\right)-a_{1}\left(t\right) & \geq\left\langle u,y-t\right\rangle -\frac{\rho_{1}}{2}\Vert y-t\Vert^{2}\\
 & \geq\left\langle u,y-t\right\rangle -\frac{\rho_{2}}{2}\Vert y-t\Vert^{2} \geq\left\langle u+\rho_{2}t,y-t\right\rangle -\frac{\rho_{2}}{2}\left(\left\Vert y\right\Vert ^{2}-\left\Vert t\right\Vert ^{2}\right).
\end{align*}
Hence $u+\rho_{2}t\in\partial\bar{a}_{1}\left(t\right)$, we also
obtain that for $u\in\partial_{(2,\rho_{2})}a_{2}\left(t\right),u+\rho_{2}t\in\partial\bar{a}_{2}\left(t\right)$.
Thus, $\partial\bar{a}_{1}\left(t\right)\cap\partial\bar{a}_{2}\left(t\right)\neq\emptyset$.
By \cite[Lemma 4]{gribonval2020characterization}, there exists a constant $K\in\mathbb{R}$
such that 
\begin{align*}
\bar{a}_{1}\left(t\right)-\bar{a}_{2}\left(t\right) & =K \quad \text{or } a_{1}\left(t\right)-a_{2}\left(t\right)  =K.
\end{align*}
\end{proof}
We extend the result of Lemma \ref{lem: diff const wc} for $\Phi_{lsc}^\mathbb{R}$-convex functions.
\begin{lemma}
\label{lem: lsc conv poly-connected}
\begin{itemize}
    \item Consider $\theta_{1},\theta_{2}:\mathbb{R}\to(-\infty,+\infty]$ to be proper $\Phi_{lsc}^\mathbb{R}$-convex such that $\mathrm{dom}\  \theta_{i}=\mathrm{dom}\ \partial_{lsc}^\mathbb{R}\theta_{i}=\left[0,1\right]$ and $\partial_{lsc}^\mathbb{R}\theta_{1}\left(t\right)\cap\partial_{lsc}^\mathbb{R}\theta_{2}\left(t\right)\neq\emptyset$ on $\left[0,1\right]$. Then there exists a constant $K\in\mathbb{R}$ such that $\theta_{1}\left(t\right)-\theta_{2}\left(t\right)=K$ on $\left[0,1\right]$.
    \item Let $\theta_{1},\theta_{2}:H\to(-\infty,+\infty]$ be proper and $E\subset H$ a non-empty polygonally connected set. 
Assume that for each $z\in E,\partial_{lsc}^\mathbb{R}\theta_{1}\left(z\right)\cap\partial_{lsc}^\mathbb{R}\theta_{2}\left(z\right)\neq\emptyset$; then there exists a constant $K\in\mathbb{R}$ such that $\theta_{1}\left(x\right)-\theta_{2}\left(x\right)=K$ for all $x\in E$. 
\end{itemize}

\end{lemma}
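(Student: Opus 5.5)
Both statements reduce to Lemma \ref{lem: diff const wc}. The key fact, recalled after Corollary \ref{cor: lsc prox relation} (\cite[Proposition 3.12]{syga2019global}), is that a $\Phi_{lsc}^\mathbb{R}$-subgradient is essentially a proximal subgradient. Concretely, if $\phi=(a,u,c)\in\partial_{lsc}^\mathbb{R}\theta(z)$, then for all $y$
\[
\theta(y)-\theta(z)\geq -a\Vert y\Vert^2+a\Vert z\Vert^2+\langle u,y-z\rangle
= \langle u-2az,y-z\rangle - a\Vert y-z\Vert^2 .
\]
So $v:=u-2az\in\partial_{(2,\rho)}\theta(z)$ for every $\rho\geq 2\max(a,0)$.

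\textbf{Step 1: pass to proximal subgradients.} At a point $z$, take a common element $\phi_z=(a_z,u_z,c_z)$ of the two $\Phi_{lsc}^\mathbb{R}$-subdifferentials. The element $v_z=u_z-2a_zz$ is then a common proximal subgradient of $\theta_1$ and $\theta_2$ at $z$, with parameter $\rho_z=2\max(a_z,0)$ (enlarged to be positive if needed). The difficulty is that $\rho_z$ depends on $z$, whereas Lemma \ref{lem: diff const wc} uses fixed $\rho_1,\rho_2$.

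\textbf{Step 2: the one-dimensional case.} I would avoid the uniformity problem by localizing to the convex argument directly. The claim to prove is that $\theta_1-\theta_2$ has zero ``derivative'' everywhere on $[0,1]$. For $s,t\in[0,1]$, apply the subgradient inequalities of both functions at $t$ and at $s$ with the common $\phi_t$ and $\phi_s$:
\[
\theta_i(s)-\theta_i(t)\geq \phi_t(s)-\phi_t(t),\qquad \theta_i(t)-\theta_i(s)\geq \phi_s(t)-\phi_s(s).
\]
These bound $\theta_i(s)-\theta_i(t)$ between $\phi_t(s)-\phi_t(t)$ and $\phi_s(s)-\phi_s(t)$, with the same bounds for $i=1,2$. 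Hence, writing $h=\theta_1-\theta_2$,
\[
|h(s)-h(t)|\leq \bigl|(\phi_s(s)-\phi_s(t))-(\phi_t(s)-\phi_t(t))\bigr|.
\]
Expanding, the right side equals $|s-t|\,\bigl|(v_s-v_t)+(a_s-a_t)\cdot O(|s-t|)+\dots\bigr|$. The cleaner route is the estimate $\le |s-t|\,|v_s-v_t|+(|a_s|+|a_t|)|s-t|^2$. The term $|s-t|^2$ is the easy part: on a fine partition of $[0,1]$ its contributions sum to at most $\max|a|\cdot\text{mesh}$.

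To handle $|v_s-v_t|$ and the possible unboundedness of $a_t$, I would instead use a Lebesgue-point argument. For each $t$ there is a neighborhood $U_t$ on which $h(s)-h(t)=o(|s-t|)$ fails by at most $\varepsilon|s-t|$. This follows because $\theta_i$ is $\rho_t$-weakly convex-like at $t$ with slope $v_t$, so a one-sided estimate holds near $t$ for both functions with the same slope. Since $[0,1]$ is compact, finitely many $U_t$ cover it. The standard covering argument for functions with zero derivative (as in the proof of \cite[Lemma 4]{gribonval2020characterization}) then gives $|h(1)-h(0)|\le\varepsilon$, and the same for any subinterval. Hence $h$ is constant.

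Alternatively, if a uniform bound $\sup_t a_t<\infty$ can be arranged by choosing the $\phi_t$ suitably, one can set $\rho=2\sup a_t$ and invoke Lemma \ref{lem: diff const wc} verbatim.

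\textbf{Step 3: the Hilbert-space case.} Take $x,y\in E$ joined by a polygonal path $x_0=x,\dots,x_n=y$ in $E$. On each segment $[x_k,x_{k+1}]$, set $a_i(t)=\theta_i(x_k+t(x_{k+1}-x_k))$ on $[0,1]$ and $+\infty$ elsewhere. Restricting a quadratic minorant $\phi=(a,u,c)$ to the line gives a one-dimensional quadratic $-a\Vert x_{k+1}-x_k\Vert^2t^2+\dots$, which belongs to $\Phi_{lsc}^\mathbb{R}(\mathbb{R})$. So the common subgradient condition transfers, and the assumption $\mathrm{dom}=\mathrm{dom}\,\partial=[0,1]$ holds. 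Step 2 gives $\theta_1-\theta_2$ constant on each segment, and chaining the segments through shared endpoints yields a single $K$ on $E$.

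\textbf{Main obstacle.} The expected difficulty is the non-uniformity of the curvature parameters $a_t$ in Step 2, and I would address it through the local covering argument.
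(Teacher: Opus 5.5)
The gap is in Step 2, and it cannot be closed: the statement is false unless the curvature parameters are uniformly bounded. Your sandwich estimate is correct; in one dimension it reads $|h(s)-h(t)|\le (v_s-v_t)(s-t)+(a_s+a_t)(s-t)^2$. But the claim that $|h(s)-h(t)|\le\varepsilon|s-t|$ on a neighbourhood of each $t$ does not follow from it. The common subgradient at $t$ only gives the \emph{lower} bound $\theta_i(s)-\theta_i(t)\ge v_t(s-t)-a_t(s-t)^2$ for both $i$. The upper bound comes from the subgradient at $s$, and its parameter $a_s$ may grow like $|s-t|^{-2}$ as $s\to t$; also $v_s$ need not tend to $v_t$. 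Two functions sharing a one-sided estimate at $t$ can still differ by a jump there, so the covering argument has nothing to work with. Your partition bound via $\max|a|$ has the same problem. Your fallback (choose the $\phi_t$ with $\sup_t a_t<\infty$ and apply Lemma~\ref{lem: diff const wc}) is exactly the paper's proof. The paper asserts that $\tilde a=\sup_{t\in[0,1]}a_t$ is finite ``as we are taking $t\in[0,1]$'', and this is unjustified: raising $a$ preserves a $\Phi_{lsc}^{\mathbb{R}}$-subgradient but lowering it does not, and the least admissible $a_t$ can be unbounded on $[0,1]$.

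Here is a counterexample. Let $\theta_2=0$ on $[0,1]$, and let $\theta_1=0$ on $[0,\tfrac{1}{2}]$ and $\theta_1=1$ on $(\tfrac{1}{2},1]$, both $+\infty$ off $[0,1]$. Both functions are proper, lsc and $\Phi_{lsc}^{\mathbb{R}}$-convex; off $[0,1]$ use the minorants $y\mapsto k\,y(y-1)$ with $k\to\infty$.
For $t\in[0,\tfrac{1}{2}]$, $\phi_t\equiv 0$ is a common subgradient, since both functions attain their minimum $0$ at $t$.
For $t\in(\tfrac{1}{2},1]$, set $a_t=(t-\tfrac{1}{2})^{-2}$ and $\phi_t(y)=-a_ty^2+2a_tty$, so that $\phi_t(y)-\phi_t(t)=-a_t(y-t)^2$. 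This lies below $\theta_2(y)-\theta_2(t)=0$. It also lies below $\theta_1(y)-\theta_1(t)$, which is $0$ for $y\in(\tfrac{1}{2},1]$ and equals $-1\ge -a_t(y-t)^2$ for $y\in[0,\tfrac{1}{2}]$ because $t-y\ge t-\tfrac{1}{2}$.
All hypotheses of both items therefore hold (with $H=\mathbb{R}$, $E=[0,1]$ in the second), yet $\theta_1-\theta_2$ jumps at $\tfrac{1}{2}$. Here $a_t\to\infty$ as $t\downarrow\tfrac{1}{2}$, which is precisely the non-uniformity you flagged, and at $t=\tfrac{1}{2}$ your bound equals $1$ for every $s>\tfrac{1}{2}$.

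The lemma therefore needs an extra hypothesis, for instance that the common subgradients can be chosen with $a_t$ bounded along each segment; this amounts to a fixed $\rho$ as in Lemma~\ref{lem: diff const wc}. Under that hypothesis your fallback is correct. Replacing every $a_t$ by $\tilde a=\sup_t a_t$ gives common convex subgradients of $\theta_i+\tilde a|\cdot|^2$ on $[0,1]$, the one-dimensional convex lemma applies, and your Step 3 chains the segments.
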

\begin{proof}
As $\mathrm{dom}\ \partial_\Phi \theta_{i}=\left[0,1\right]$, for every $t\in\left[0,1\right]$ there exists $\phi_{t}=\left(a_{t},u_{t}\right)\in\partial_{\Phi}\theta_{1}\left(t\right)\cap\partial_{\Phi}\theta_{2}\left(t\right)$
so that for all $z\in\mathbb{R}$,
\begin{align*}
\left(\theta_{1}\left(z\right)+a_{t}\Vert z\Vert^{2}\right)-\left(\theta_{1}\left(t\right)+a_{t}\Vert t\Vert^{2}\right) & \geq\left\langle u_{t},z-t\right\rangle ,\\
\left(\theta_{2}\left(z\right)+a_{t}\Vert z\Vert^{2}\right)-\left(\theta_{2}\left(t\right)+a_{t}\Vert t\Vert^{2}\right) & \geq\left\langle u_{t},z-t\right\rangle .
\end{align*}
Set $\tilde{a}:=\sup_{t\in[0,1]}a_{t}$ which is finite as we are taking $t\in[0,1]$, and $\bar{\theta}_{i}:=\theta_{i}+\tilde{a}\Vert\cdot\Vert^{2}$. We have $\partial \bar{\theta}_1 (t) \cap \partial \bar{\theta}_2(t) \neq \emptyset$ on $[0,1]$ which infers $\bar{\theta}_{i}$ is convex on $\left[0,1\right]$. 
Using Lemma \ref{lem: diff const wc}, there exists a constant $K\in\mathbb{R}$ such that $\bar{\theta}_{1}\left(t\right)-\bar{\theta}_{2}\left(t\right)=K$.
Switching back to $\theta_{1},\theta_{2}$ we finish the proof.
\end{proof}

The results of Lemma \ref{lem: diff const wc} can be generalized to $\gamma$-paraconvex functions. 

\begin{lemma}
\label{lem: paraconv subdiff real line}
Let $C>0$ and $1<\gamma \leq 2$.
\begin{itemize}
    \item Consider $a_{1},a_{2}:\mathbb{R}\to(-\infty,+\infty]$ to be proper $\gamma$-paraconvex such that $\mathrm{dom}\ a_{i}=\mathrm{dom}\ \partial_{\gamma}a_{i}=\left[0,1\right]$ and $\partial_{\gamma}a_{1}\left(t\right)\cap\partial_{\gamma}a_{2}\left(t\right)\neq\emptyset$ on $\left[0,1\right]$. Then there exists a constant $K\in\mathbb{R}$ such that $a_{1}\left(t\right)-a_{2}\left(t\right)=K$ on $\left[0,1\right]$.
    \item Let $\theta_{1},\theta_{2}:H\to(-\infty,+\infty]$ be proper and $E\subset H$ a non-empty polygonally connected set. 
Assume that for each $z\in E,\partial_{\gamma}\theta_{1}\left(z\right)\cap\partial_{\gamma}\theta_{2}\left(z\right)\neq\emptyset$; then there exists a constant $K\in\mathbb{R}$ such that $\theta_{1}\left(x\right)-\theta_{2}\left(x\right)=K$ for all $x\in E$. 
\end{itemize}
\end{lemma}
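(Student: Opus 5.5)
We reduce the one-dimensional statement to a Lipschitz argument and then obtain the polygonal statement by chaining segments, as in the proofs of Lemma \ref{lem: diff const wc} and \cite[Corollary 9]{gribonval2020characterization}. Since $\gamma<2$ is allowed, we cannot add a quadratic and invoke the convex result \cite[Lemma 4]{gribonval2020characterization} directly. Instead we work with the difference $d:=a_1-a_2$ on $[0,1]$.

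For the first item, fix $t\in[0,1]$ and pick $u_t\in\partial_\gamma a_1(t)\cap\partial_\gamma a_2(t)$, with constants $C_1(t),C_2(t)$ from the definition of $\gamma$-subgradient. For $s\in[0,1]$ we write the subgradient inequality of $a_1$ at $t$ evaluated at $s$, and the inequality of $a_2$ at $s$ evaluated at $t$, using some $u_s$ in the intersection at $s$. Subtracting these, and doing the same with the roles of $a_1,a_2$ exchanged, gives
\[
|d(s)-d(t)|\le |\langle u_t-u_s,s-t\rangle| + (C_1(t)+C_2(s))|s-t|^\gamma .
\]
This is not yet enough, because $u$ need not be bounded. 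We therefore use the following sharper two-sided bound. From $a_1(s)-a_1(t)\ge u_t(s-t)-C|s-t|^\gamma$ and $a_2(t)-a_2(s)\ge u_s(t-s)-C|s-t|^\gamma$ we get
\[
d(s)-d(t)\ge (u_t-u_s)(s-t)+[a_2(s)-a_2(t)] - \ldots .
\]
The cleanest route is to subdivide $[t,s]$ into $n$ equal pieces $t=t_0<\dots<t_n=s$. At each node we use the common subgradient $u_{t_i}$ in both inequalities, namely
\[
a_j(t_{i+1})-a_j(t_i)\ge u_{t_i}(t_{i+1}-t_i)-C_j|t_{i+1}-t_i|^\gamma
\]
and
\[
a_j(t_i)-a_j(t_{i+1})\ge u_{t_{i+1}}(t_i-t_{i+1})-C_j|t_{i+1}-t_i|^\gamma,
\]
so that
\[
u_{t_i}h - C_jh^\gamma\le a_j(t_{i+1})-a_j(t_i)\le u_{t_{i+1}}h + C_jh^\gamma,\qquad h=|s-t|/n.
\]
Since the same $u$'s sandwich both $a_1$ and $a_2$ increments, the increment of $d$ satisfies
\[
|d(t_{i+1})-d(t_i)|\le (u_{t_{i+1}}-u_{t_i})h+2Ch^\gamma .
\]
Summing over $i$ telescopes to
\[
|d(s)-d(t)|\le (u_s-u_t)\,|s-t|/n + 2C n^{1-\gamma}|s-t|^\gamma ,
\]
and letting $n\to\infty$ (with $\gamma>1$) yields $d(s)=d(t)$.

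The main obstacle is uniformity of the constant $C$. The definition of $\partial_\gamma$ allows $C$ to depend on the point, so in the sum we need $\sup_i C(t_i)<\infty$. We would secure this in one of two ways. The first is to read the lemma's ``Let $C>0$'' as fixing a common constant, which is how we intend to interpret it. Failing that, we would use Theorem \ref{thm: gamma monotone}: since $a_j$ is $\gamma$-paraconvex with a single constant $C$ and $\partial_\gamma a_j\neq\emptyset$ on $[0,1]$, a standard argument (Jourani) shows that every $\gamma$-subgradient satisfies the inequality with a constant depending only on the paraconvexity constant. We also need the monotone-type bound above, which only requires the $u_{t_i}$ to be finite; they are finite because they are real numbers, and the telescoping removes them anyway.

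For the second item, take $x,y\in E$ and a polygonal path $x=x_0,\dots,x_n=y$ in $E$. On each segment, define $b_j(\tau)=\theta_j(x_i+\tau(x_{i+1}-x_i))$ for $\tau\in[0,1]$, restricted to $[0,1]$ via an indicator. If $v\in\partial_\gamma\theta_1(z)\cap\partial_\gamma\theta_2(z)$, then $\langle v,x_{i+1}-x_i\rangle$ is a common $\gamma$-subgradient of $b_1,b_2$ at the corresponding $\tau$, with constant $C\Vert x_{i+1}-x_i\Vert^\gamma$. Applying the telescoping argument of the first item (which only uses the inequalities along $[0,1]$) gives that $\theta_1-\theta_2$ is constant on each segment. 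Consecutive segments share endpoints, so the constant is the same along the whole path, and hence $\theta_1(x)-\theta_2(x)=\theta_1(y)-\theta_2(y)$ for all $x,y\in E$.
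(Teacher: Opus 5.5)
Your argument is correct and takes a genuinely different route from the paper.

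\textbf{Comparison with the paper.} The paper proves the first item analytically. Paraconvexity gives local Lipschitz continuity on $(0,1)$, and Rademacher's theorem gives a.e.\ differentiability. At points of differentiability $\partial_\gamma a_i$ reduces to the derivative, so $a_1'=a_2'$ a.e., and an endpoint proposition is used to pass from $(0,1)$ to $[0,1]$. The second item is delegated to the segment-chaining of Gribonval--Nikolova.

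You instead use the common subgradients at consecutive nodes to trap both increments $a_j(t_{i+1})-a_j(t_i)$ in the same interval $[u_{t_i}h-Ch^\gamma,\,u_{t_{i+1}}h+Ch^\gamma]$. The sum then telescopes to $(u_s-u_t)|s-t|/n+2Cn^{1-\gamma}|s-t|^\gamma\to 0$. Fix the selection $t\mapsto u_t$ before refining, so that $u_t,u_s$ do not depend on $n$.

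This buys a lot. You need no Lipschitz continuity, no measure theory and no Clarke subdifferential. You need no separate endpoint argument, because you use the subgradients at $0$ and $1$ directly; the paper's route must combine absolute continuity on compact subintervals with continuity at the endpoints. The same computation also transfers verbatim to segment restrictions in the second item. The price is an explicit uniform constant $C$, which you correctly flagged.

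\textbf{The uniform constant.} Two points need correcting.

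First, for item one your fallback is right, but Theorem~\ref{thm: gamma monotone} goes the other way (subdifferentiability implies paraconvexity). The correct justification is a direct upgrade. Let $u\in\partial_\gamma a_j(t)$ with some constant $C'$, and let $a_j$ be $\gamma$-paraconvex with constant $C_j$. For $y\in[0,1]$, bound $a_j(t+\lambda(y-t))$ from below by the subgradient inequality and from above by paraconvexity. Divide by $\lambda$ and let $\lambda\downarrow 0$; the term $C'\lambda^{\gamma-1}$ vanishes and you get $a_j(y)-a_j(t)\ge u(y-t)-C_j|y-t|^\gamma$. Hence $C=\max(C_1,C_2)$ works at every node.

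Second, in item two nothing is assumed beyond properness, so the uniform-$C$ reading is not optional. With point-dependent constants the statement is false. On $H=\mathbb{R}$ take $\theta_2=\iota_{[0,1]}$ and $\theta_1=\mathbf{1}_{(1/2,1]}+\iota_{[0,1]}$. Then $0\in\partial_\gamma\theta_1(z)\cap\partial_\gamma\theta_2(z)$ for every $z\in E=[0,1]$, using constant $(z-1/2)^{-\gamma}$ when $z>1/2$. Yet $\theta_1-\theta_2$ is not constant on $E$.

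The paper's route tacitly needs the same reading, since its step ``nonempty $\partial_\gamma$ $\Rightarrow$ paraconvex'' fails for this $\theta_1$. State the fixed constant $C$ as a standing hypothesis for the second item rather than presenting it as one of two options.
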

\begin{proof}
We only prove the first statement. The second one can be proved analogously \cite[Corollary 9]{gribonval2020characterization}. As $\partial_{\gamma} a_i (x) \neq \emptyset$ for all $x\in [0,1]$, it is $\gamma$-paraconvex (see \cite[Proposition 3.13]{rahimi2024projected}. By \cite[Theorem 3.8]{rahimi2024projected}, it is locally Lipschitz continuous on $(0,1)$ and so is Fr\'echet differentiable almost everywhere on $(0,1)$ by Rademacher's Theorem (see \cite{evans2025measure}).
Hence, there exist a countable set $B_i\subset [0,1]$ with measure zero such that   $\gamma$-subdifferentials of $a_i$ coincide with Clarke subdifferentials which is the derivative of $a_i$ except on $(0,1)\backslash B_i$ (see \cite[Theorem 3.1]{jourani1996subdifferentiability}). The continuity of $a_i$ can be extended to $[0,1]$ by Proposition \ref{prop: paraconvex-cont end point}. Hence, $a_i$ is differentiable on $[0,1]\backslash B_i$.
Combining with the assumption, $a_1$ and $a_2$ has the same derivative on $(0,1)\backslash (B_1\cup B_2)$ which implies that there exists a constant $K\in\mathbb{R}$ such that $a_1(x)-a_2(x)=K$ for all $x\in [0,1]$.

The proof of the second statement is analogous to Lemma \ref{lem: diff const wc}.
\end{proof}

\subsection{Continuity of proximity operator auxiliary results}
We present the supporting results to Section \ref{sec: smooth prox} and some properties of paraconvex functions. 
\begin{lemma}
\label{lem: paraconvex smooth selection}
Let $f:H\to(-\infty,+\infty]$ be proper $\gamma$-paraconvex where
$1<\gamma\leq2$ and $G:H\to H$. Consider an open set $X\subset\mathrm{dom}\ G\cap\mathrm{dom}\ f$, assuming $\partial_{\gamma}f\left(x\right)\neq\emptyset$ for all $x\in X,C>0$ and $G\left(x\right)\in\partial_{\gamma}f\left(x\right).$
Then the followings are equivalent:
\begin{enumerate}
\item $G$ is continuous on $X$.
\item $f$ is continuously differentiable on $X$.
\end{enumerate}
\end{lemma}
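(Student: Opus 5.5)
The plan is to adapt the convex argument (cf. \cite[Lemma 5]{gribonval2020characterization}), replacing the monotonicity of the convex subdifferential by the defining inequality of $\gamma$-subgradients. Write $C_x$ for the constant attached to $G(x)\in\partial_\gamma f(x)$.

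For (2)$\Rightarrow$(1), I will invoke Remark \ref{rmk: gamma subgrad gateaux}. Since $f$ is continuously differentiable on the open set $X$, it is G\^ateaux differentiable there, so $\partial_\gamma f(x)=\{\nabla f(x)\}$. Hence $G(x)=\nabla f(x)$ on $X$, and $G$ is continuous because $\nabla f$ is.

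For (1)$\Rightarrow$(2), fix $x\in X$ and a ball $B(x,r)\subset X$. The aim is a two-sided Fr\'echet estimate at $x$ with derivative $G(x)$.
\begin{itemize}
\item \emph{Lower bound.} It comes directly from $G(x)\in\partial_\gamma f(x)$:
\[
f(y)-f(x)-\langle G(x),y-x\rangle\geq -C_x\Vert y-x\Vert^\gamma=o(\Vert y-x\Vert),
\]
using $\gamma>1$.
\item \emph{Upper bound.} Apply the subgradient inequality at the point $y$ in the direction of $x$:
\[
f(x)-f(y)\geq\langle G(y),x-y\rangle-C_y\Vert x-y\Vert^\gamma .
\]
This gives
\[
f(y)-f(x)-\langle G(x),y-x\rangle\leq \langle G(y)-G(x),y-x\rangle + C_y\Vert y-x\Vert^\gamma .
\]
By continuity of $G$, the first term is $o(\Vert y-x\Vert)$. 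The second term is also $o(\Vert y-x\Vert)$, provided $C_y$ stays bounded for $y$ near $x$.
\end{itemize}
Combining the two bounds shows that $f$ is Fr\'echet differentiable at $x$ with $\nabla f(x)=G(x)$. Since $G$ is continuous, $f\in C^1(X)$.

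The main obstacle is that the constants $C_y$ may not be locally bounded. I will use $\gamma$-paraconvexity of $f$, with its uniform constant $C$ from Definition \ref{def:paraconvex}, to get a uniform constant. From
\[
f(y+\lambda(x-y))\leq (1-\lambda)f(y)+\lambda f(x)+C\lambda(1-\lambda)\Vert x-y\Vert^\gamma,
\]
subtract $f(y)$, divide by $\lambda$, and let $\lambda\to0^+$. The difference quotient along the segment is controlled because $G(y)$ is a (local) $\gamma$-subgradient at $y$. This should give
\[
f(x)-f(y)\geq \langle G(y),x-y\rangle - C\Vert x-y\Vert^\gamma
\]
with the uniform constant $C$. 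Establishing this needs a directional derivative at $y$ that agrees with $\langle G(y),\cdot\rangle$ on $x-y$. I expect this step to be the delicate one.

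If that fails, the fallback has two parts. First, use local Lipschitz continuity of paraconvex functions (\cite[Theorem 3.8]{rahimi2024projected}) and Rademacher to get differentiability a.e. on segments. Second, integrate $\langle G,\cdot\rangle$ along the segment $[x,y]$, which gives
\[
f(y)-f(x)=\int_0^1\langle G(x+t(y-x)),y-x\rangle\,dt,
\]
from which the Fr\'echet estimate follows immediately by continuity of $G$.
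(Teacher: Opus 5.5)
Your argument is the paper's own. The lower bound comes from $G(x)\in\partial_\gamma f(x)$, the upper bound from $G(y)\in\partial_\gamma f(y)$ applied in the direction of $x$, and continuity of $G$ makes $\langle G(y)-G(x),y-x\rangle=o(\Vert y-x\Vert)$; the direction (2)$\Rightarrow$(1), which the paper calls clear, is the uniqueness of $\gamma$-subgradients at points of differentiability.

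The paper simply writes one constant $C$ for all nearby $y$, so your paraconvexity step is a genuine repair, and an easy one. Combine
\[
f(y+\lambda(x-y))-f(y)\geq\lambda\langle G(y),x-y\rangle-C_y\lambda^{\gamma}\Vert x-y\Vert^\gamma
\]
with the paraconvexity inequality, divide by $\lambda$, and let $\lambda\to0^+$ so that $C_y\lambda^{\gamma-1}\to0$. This gives $f(x)-f(y)\geq\langle G(y),x-y\rangle-C\Vert x-y\Vert^\gamma$ with the uniform constant $C$. No directional derivative is needed, so the Rademacher fallback is unnecessary.
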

\begin{proof}
From (2.) to (1.) is clear, so we just need to prove (1.) implies
(2.). Let $x\in X$, for any $y$ in the neighborhood of $x$, there exists $C>0$ such that
\begin{align*}
-C\left\Vert y\right\Vert ^{\gamma} & \leq f\left(x+y\right)-f\left(x\right)-\left\langle y,G\left(x\right)\right\rangle \\
 & \leq\left\langle y,G\left(x+y\right)\right\rangle -\left\langle y,G\left(x\right)\right\rangle +C\left\Vert y\right\Vert ^{\gamma}\\
 & \leq\left\Vert y\right\Vert \left\Vert G\left(x+y\right)-G\left(x\right)\right\Vert +C\left\Vert y\right\Vert ^{\gamma}.
\end{align*}
Hence, we have 
\[
-\lim_{\left\Vert y\right\Vert \neq0,\left\Vert y\right\Vert \to0}C\left\Vert y\right\Vert ^{\gamma-1}\leq\lim_{\left\Vert y\right\Vert \neq0,\left\Vert y\right\Vert \to0}\frac{f\left(x+y\right)-f\left(x\right)-\left\langle y,G\left(x\right)\right\rangle }{\left\Vert y\right\Vert }\leq\lim_{\left\Vert y\right\Vert \neq0,\left\Vert y\right\Vert \to0}\left\Vert G\left(x+y\right)-G\left(x\right)\right\Vert +C\left\Vert y\right\Vert ^{\gamma-1}.
\]
By the continuity of $G$, the above inequalities imply that $f$
is differentiable at $x\in X$ and $G\left(x\right)=\nabla f\left(x\right)$
which is continuous.
\end{proof}
The proof above is an imitation of \cite[Proposition 17.41]{Bau2011}. In fact, we can extend \cite[Proposition 17.41]{Bau2011} to $\gamma$-paraconvex function with $\gamma>1$.

\begin{lemma}
\label{lem: cont selection smooth}
Let $f:H\to(-\infty,+\infty]$ be proper $\gamma$-paraconvex where
$1<\gamma\leq2$ and $x\in \mathrm{int}\ \mathrm{dom}\ f$.
The following are equivalent:
\begin{enumerate}[label=\roman*.]
\item $f$ is continuously differentiable on $X$.
\item Every selection of $\partial_{\gamma} f$ is continuous on $X$.
\item There exists a selection of $\partial_{\gamma}f$ that is continuous at $x$.
\end{enumerate}
\end{lemma}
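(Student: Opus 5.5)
The plan is to follow the cycle (i) $\Rightarrow$ (ii) $\Rightarrow$ (iii) $\Rightarrow$ (i), mirroring \cite[Proposition 17.41]{Bau2011}, with Lemma \ref{lem: paraconvex smooth selection} and Remark \ref{rmk: gamma subgrad gateaux} as the main tools. Throughout, $X$ denotes an open neighbourhood of $x$ contained in $\mathrm{int}\ \mathrm{dom}\ f$.

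For (i) $\Rightarrow$ (ii), assume $f$ is continuously differentiable on $X$. Then $f$ is Gâteaux differentiable at every $z\in X$. By Remark \ref{rmk: gamma subgrad gateaux}, $\partial_\gamma f(z)=\{\nabla f(z)\}$. Here one checks that $\nabla f(z)$ indeed belongs to $\partial_\gamma f(z)$: this uses the $\gamma$-paraconvexity of $f$ together with differentiability, by letting $\lambda\to 0$ in Definition \ref{def:paraconvex} to obtain $f(y)-f(z)\ge\langle\nabla f(z),y-z\rangle - C\Vert y-z\Vert^\gamma$. Hence every selection of $\partial_\gamma f$ on $X$ equals $\nabla f$, which is continuous. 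The step (ii) $\Rightarrow$ (iii) is immediate once a selection exists. Existence follows because the same subgradient inequality shows $\partial_\gamma f(z)\neq\emptyset$ on $X$.

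The substantive direction is (iii) $\Rightarrow$ (i). Let $G$ be a selection of $\partial_\gamma f$ that is continuous at $x$. Running the estimate of Lemma \ref{lem: paraconvex smooth selection} at the single point $x$ gives
\[
-C\Vert y\Vert^\gamma \le f(x+y)-f(x)-\langle G(x),y\rangle \le \Vert y\Vert\,\Vert G(x+y)-G(x)\Vert + C\Vert y\Vert^\gamma .
\]
The right-hand inequality uses $G(x+y)\in\partial_\gamma f(x+y)$ with $z=x$. Dividing by $\Vert y\Vert$ and letting $y\to0$ shows that $f$ is Fréchet differentiable at $x$ with $\nabla f(x)=G(x)$.

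To pass from pointwise differentiability at $x$ to $C^1$ on $X$, I would argue by contradiction using uniqueness. Differentiability at $x$ forces $\partial_\gamma f(x)=\{\nabla f(x)\}$ by Remark \ref{rmk: gamma subgrad gateaux}. I would then show that the set-valued map $\partial_\gamma f$ is upper semicontinuous (norm-to-weak) at $x$. This should follow from local boundedness of $\partial_\gamma f$, since $f$ is locally Lipschitz by \cite[Theorem 3.8]{rahimi2024projected}, combined with closedness of the graph, obtained by passing to the limit in the $\gamma$-subgradient inequality with a locally uniform constant $C$. Upper semicontinuity together with the singleton value yields continuity at $x$ of every selection, and in particular of $\nabla f$ on the set of differentiability points near $x$.

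The main obstacle is exactly this last step. The statement as written is pointwise at $x$ but concludes something about all of $X$. I expect to have to interpret (iii) as holding at every point of $X$; equivalently, I would apply the pointwise argument at each $z\in X$ and then invoke Lemma \ref{lem: paraconvex smooth selection} to conclude $f\in C^1(X)$. A second delicate point is ensuring that the constant $C$ in the $\gamma$-subgradient inequality can be chosen uniformly near $x$. For this I would use Theorem \ref{thm: gamma monotone}, which attaches a single paraconvexity constant to $f$, and derive the subgradient inequality with that constant.
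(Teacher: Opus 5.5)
There is a genuine gap. It comes from which way you repair the statement.

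\textbf{The statement and the two repairs.} You are right that the statement mixes a pointwise hypothesis in (iii) with a conclusion on $X$. Read literally, it is false. Take the convex piecewise-linear interpolant of $t\mapsto t^2$ at the nodes $0,\pm1/n$, extended by $t^2$ for $|t|\ge 1$. It is differentiable at $0$, and every selection of $\partial_\gamma f=\partial f$ is continuous at $0$. Yet $f$ has a kink at every $\pm1/n$, so it is $C^1$ on no neighbourhood of $0$.

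The paper repairs the statement in the opposite direction from you. Its proof is the pointwise analogue of \cite[Proposition 17.41]{Bau2011}:
\begin{itemize}
\item its (iii)$\Rightarrow$(i) yields only Fr\'echet differentiability at $x$, by exactly your estimate;
\item its (i)$\Rightarrow$(ii) starts from Fr\'echet differentiability at the single point $x$ and shows that every selection of $\partial_\gamma f$ is norm-continuous at $x$.
\end{itemize}
By upgrading (iii) to continuity on all of $X$, you obtain a true statement, but it is essentially Lemma \ref{lem: paraconvex smooth selection} again. Your (i)$\Rightarrow$(ii), via $\partial_\gamma f(z)=\{\nabla f(z)\}$, works only because differentiability is assumed at every $z\in X$. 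If $f$ is differentiable at $x$ alone, $\partial_\gamma f$ can be multivalued at points arbitrarily close to $x$, and uniqueness gives nothing.

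\textbf{The missing ingredient.} What you need is the paper's estimate. For $x_n\to x$, $u_n\in\partial_\gamma f(x_n)$, $u=\nabla f(x)$ and $\Vert z\Vert\le1$,
\[
\delta\langle z,u_n-u\rangle\le\bigl[f(x+\delta z)-f(x)-\delta\langle z,u\rangle\bigr]+\bigl[f(x)-f(x_n)\bigr]+\Vert u_n\Vert\,\Vert x-x_n\Vert+C\Vert x-x_n+\delta z\Vert^{\gamma}.
\]
Use Fr\'echet differentiability (for $\varepsilon>0$ and all small $\delta$), boundedness of $(u_n)$ (Proposition \ref{prop: paraconv subgrad bounded}) and continuity of $f$ at $x$. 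This gives $\limsup_n\Vert u_n-u\Vert\le\varepsilon+2^{\gamma-1}C\delta^{\gamma-1}$. Letting $\delta\downarrow0$ and then $\varepsilon\downarrow0$ yields $u_n\to u$ in norm.

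Your upper-semicontinuity substitute cannot do this job. A closed graph plus local boundedness only shows that $(u_n)$ converges weakly to $u$, i.e.\ weak continuity of selections. That is strictly weaker than norm continuity when $H$ is infinite-dimensional; it is exactly the G\^ateaux/Fr\'echet distinction.

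\textbf{Uniform constant.} Your concern is legitimate, since the paper silently uses a single $C$. However, Theorem \ref{thm: gamma monotone} is the wrong tool, because it goes from subdifferentiability to paraconvexity. The fix is the computation you already make in (i)$\Rightarrow$(ii). Any $u\in\partial_\gamma f(z)$, whatever its own constant, satisfies
$\langle u,h\rangle\le\liminf_{\lambda\downarrow0}\lambda^{-1}\bigl(f(z+\lambda h)-f(z)\bigr)\le f(z+h)-f(z)+C\Vert h\Vert^\gamma$,
where $C$ is the paraconvexity constant of $f$.

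\textbf{Existence of a selection.} In (ii)$\Rightarrow$(iii) you obtain a selection from the differentiability of $f$, which is not available in that step. Nonemptiness of $\partial_\gamma f$ on $X$ has to come from paraconvexity and continuity of $f$. You can get it from \cite[Theorem 3.1]{jourani1996subdifferentiability}, or by applying Hahn--Banach to the sublinear directional derivative $f'(z;\cdot)$ and then the same computation.
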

\begin{proof}
(i)$\Rightarrow$(ii): Assume by contradiction, set $u=\nabla f\left(x\right)$, there exist sequences $\left(x_{n},u_{n}\right)_{n\in\mathbb{N}}\subset\mathrm{gra}\ \partial_{\gamma}f$ and $\varepsilon>0$ such that $x_{n}\to x$ and $\Vert u_{n}-u\Vert>\varepsilon$.
On the other hand, the Fr\'echet differentiability at $x$ implies that there exists $\delta>0$ such that for any $y\in B\left(x,\delta\right)$
\[
f\left(x+y\right)-f\left(x\right)-\left\langle y,u\right\rangle \leq\varepsilon\Vert y\Vert.
\]
We can construct a sequence $\left(z_{n}\right)_{n\in\mathbb{N}}$ in the unit ball such that $\left\langle z_{n},u_{n}-u\right\rangle >2\varepsilon+2^{\gamma-1}C\delta^{\gamma}$.
We have 
\begin{align*}
2\varepsilon\delta+2^{\gamma-1}C\delta^{\gamma} & <\delta\left\langle z_{n},u_{n}-u\right\rangle \leq\delta\left\langle z_{n},u_{n}\right\rangle -\delta\left\langle z_{n},u\right\rangle \\
 & \leq f\left(x+\delta z_{n}\right)-f\left(x_{n}\right)-\left\langle x-x_{n},u_{n}\right\rangle +C\left\Vert x-x_{n}+\delta z_{n}\right\Vert ^{\gamma}-\delta\left\langle z_{n},u\right\rangle \\
 & \leq\left[f\left(x+\delta z_{n}\right)-f\left(x\right)-\delta\left\langle z_{n},u\right\rangle \right]+C\left\Vert x-x_{n}+\delta z_{n}\right\Vert ^{\gamma}+f\left(x\right)-f\left(x_{n}\right)-\left\langle x-x_{n},u_{n}\right\rangle \\
 & \leq\varepsilon\Vert\delta z_{n}\Vert+2^{\gamma-1}C\left\Vert x-x_{n}\right\Vert ^{\gamma}+2^{\gamma-1}C\left\Vert \delta z_{n}\right\Vert ^{\gamma}+\left\Vert x-x_{n}\right\Vert \left\Vert u_{n}\right\Vert +f\left(x\right)-f\left(x_{n}\right)\\
 & \rightarrow\varepsilon\delta+2^{\gamma-1}C\delta^{\gamma},
\end{align*}
where the last limit comes from the boundedness of $(u_n)_{n\in\mathbb{N}}$ (see Proposition \ref{prop: paraconv subgrad bounded}) and the continuity of $f$.
This is a contradiction.

We just need to prove (iii)$\Rightarrow$(i): Let $G:H\to H$ be a continuous selection of $\partial_{\gamma} f$ at $x\in \mathrm{int }\mathrm{dom }f$. We can find $\delta>0$ such that $B(x,\delta)\subset \mathrm{dom}\ G \subset\mathrm{dom}\ \partial_{\gamma}f$. Let $y\in B(0,\delta)$ such that $G(x+y)\in \partial_{\gamma} f(x+y)$. We have
\begin{align}
    0 & \leq f(x+y) -f(x) -\langle G(x),y\rangle +C\Vert y\Vert^\gamma \nonumber\\
    & \leq \langle y, G(x+y)-G(x)\rangle +2C\Vert y\Vert^\gamma \nonumber\\
    & \leq \Vert y\Vert \Vert G(x+y)-G(x)\Vert +2C\Vert y\Vert^\gamma.
\end{align}
Since $\gamma>1$, we can divide the above inequality by $\Vert y\Vert$ to obtain
\[
\lim_{\Vert y\Vert \to 0} \frac{f(x+y) -f(x) -\langle G(x),y\rangle}{\Vert y\Vert } =0.
\]
Hence, $f$ is Fr\'echet differentiable at $x$.
\end{proof}
This result helps us to extend \cite[Proposition 17.41]{Bau2011} to $\gamma$-subdifferentials and even to $\Phi_{lsc}^\mathbb{R}$-subdifferentials. The same results for G\^ateaux differential can be obtained as in \cite[Proposition 17.39]{Bau2011}.

\begin{corollary}
The statements in Lemma \ref{lem: cont selection smooth} hold true when replacing $\gamma$-subdifferentials with proximal subdifferentials or $\Phi_{lsc}^\mathbb{R}$-subdifferentials.
\end{corollary}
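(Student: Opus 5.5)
Our plan is to transfer each implication in Lemma \ref{lem: cont selection smooth} to the two new subdifferentials by showing that, near the point $x\in\mathrm{int}\,\mathrm{dom}\,f$, they embed into $\partial_2 f$. Then the $\gamma$-paraconvex argument with $\gamma=2$ applies verbatim.

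\emph{Proximal subdifferential.} If $u\in\partial_{(2,\rho)} f(y)$, then
\[
f(z)-f(y)\ge\langle u,z-y\rangle-\tfrac{\rho}{2}\Vert z-y\Vert^2 \quad\text{for all } z,
\]
so $u\in\partial_2 f(y)$ with the fixed constant $C=\rho/2$. The existence of such subgradients on a neighbourhood makes $f$ weakly convex, i.e.\ $2$-paraconvex, by Theorem \ref{thm: gamma monotone}. Consequently every selection of $\partial_{(2,\rho)}f$ is a selection of $\partial_2 f$ with a uniform constant, and (iii)$\Rightarrow$(i) follows exactly as in Lemma \ref{lem: cont selection smooth}. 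The estimate
\[
0\le f(x+y)-f(x)-\langle G(x),y\rangle+C\Vert y\Vert^2\le \Vert y\Vert\,\Vert G(x+y)-G(x)\Vert+2C\Vert y\Vert^2
\]
only uses the subgradient inequality at $x$ and at $x+y$ with a common $C$. For (i)$\Rightarrow$(ii), the contradiction argument also only needs the subgradient inequality at $x_n$ with a constant independent of $n$, together with the boundedness of $(u_n)$ from Proposition \ref{prop: paraconv subgrad bounded}.

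\emph{$\Phi_{lsc}^\mathbb{R}$-subdifferential.} Here $\partial_\Phi f(y)$ consists of triples $\phi=(a,u,c)$, and the natural continuous object is the map $y\mapsto\nabla\phi_y(y)=u_y-2a_yy$. We read a ``selection'' as $y\mapsto\phi_y$, continuous in the Hilbert space $\mathbb{R}\times H\times\mathbb{R}$, and write $v_y:=u_y-2a_yy$. The key identity is
\[
\phi(z)-\phi(y)=\langle u-2ay,z-y\rangle-a\Vert z-y\Vert^2 .
\]
Hence $\phi\in\partial_\Phi f(y)$ implies $v_y\in\partial_{(2,2a^+)}f(y)$. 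If $y\mapsto\phi_y$ is continuous at $x$, then $a_y$ is locally bounded, so a uniform constant $C=\sup a_y^+$ works on a small ball. Moreover $y\mapsto v_y$ is continuous at $x$, and the proximal case gives (iii)$\Rightarrow$(i).

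For (i)$\Rightarrow$(ii), Proposition \ref{prop: diff and subgrad} gives $v_y=\nabla f(y)$ at every point of differentiability. So the $H$-component of any selection is forced to be $\nabla f$, which is continuous. Continuity of the full triple cannot be expected, since $a$ may vary freely above a threshold, so the statement must be understood through $v_y$.

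\textbf{Main obstacle.} The hardest step is the uniformity of the quadratic constant. Proximal and $\Phi_{lsc}^\mathbb{R}$-subgradients come with pointwise constants $\rho_y$ or $a_y$, while both directions of Lemma \ref{lem: cont selection smooth} use one $C$ near $x$. In (iii)$\Rightarrow$(i) this follows from continuity at $x$ as explained above. In (i)$\Rightarrow$(ii) we would first try to restrict to subgradients with $a_y\le\tilde a$, as in the proof of Lemma \ref{lem: lsc conv poly-connected}. Failing that, we would use local weak convexity of $f$: by Theorem \ref{thm: gamma monotone}, a single constant $\rho$ exists on a ball, and then $\nabla f(y)\in\partial_{(2,\rho)}f(y)$ supplies the uniform inequality.
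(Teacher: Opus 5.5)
Your proposal is correct and is essentially the argument the paper intends: the paper gives no separate proof, relying on the $\gamma=2$ case of Lemma \ref{lem: cont selection smooth} and on the equivalence between $\Phi_{lsc}^\mathbb{R}$-subgradients and proximal subgradients invoked after Corollary \ref{cor: lsc prox relation}, which your identity $\phi(z)-\phi(y)=\langle u-2ay,z-y\rangle-a\Vert z-y\Vert^2$ makes explicit. Your remark that (ii) can only hold for $y\mapsto u_y-2a_yy$ and not for the full triple (because $c_y$, and $a_y$ above a threshold, are free) is a genuine correction the paper omits; to close the cycle under that reading, note that the lemma's global weak convexity ($f+\frac{\rho}{2}\Vert\cdot\Vert^2$ convex) makes $y\mapsto(\rho/2,\nabla f(y)+\rho y,0)$ a selection of $\partial_{\Phi}f$ that is continuous wherever $\nabla f$ is, which gives (i)$\Rightarrow$(iii) directly.
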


An auxiliary result related to Lemma \ref{lem: paraconv subdiff real line} is the domain of $\gamma$-paraconvex and $\Phi_\gamma$-convex functions.
We can prove that on the real line, the domain of $\gamma$-paraconvex and $\Phi_\gamma$-convex function $\varphi:\mathbb{R}\to (-\infty,+\infty]$ is an interval.

\begin{lemma}
Let $\varphi:\mathbb{R}\to(-\infty,+\infty]$ be proper function. We have the following:
\begin{enumerate}[label=\roman*.]
\item If $\varphi$ is proper lsc
$\gamma$-paraconvex with $\gamma>1$, its domain is an interval.
\item If $\varphi$ is $\Phi_{\gamma}$-convex with
$\Phi_{\gamma}=\left\{ \phi:\mathbb{R}\to\mathbb{R}:\phi\text{ is }\gamma\text{-paraconvex}\right\} $.
Then its domain is an interval.
\end{enumerate}
\end{lemma}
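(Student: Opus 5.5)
Both parts reduce to one claim: for $a<b$ in $\mathrm{dom}\,\varphi$ and $\lambda\in(0,1)$, the point $x_\lambda=\lambda a+(1-\lambda)b$ lies in $\mathrm{dom}\,\varphi$. On $\mathbb{R}$, a set that contains every segment between two of its points is an interval. So the plan is to show that $\varphi(x_\lambda)<+\infty$ in each setting.

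\textbf{Part (i).} This follows almost directly from Definition \ref{def:paraconvex}. With $x=a$, $y=b$ we get
\[
\varphi(x_\lambda)\leq \lambda\varphi(a)+(1-\lambda)\varphi(b)+C\lambda(1-\lambda)|a-b|^\gamma .
\]
The right-hand side is finite because $\varphi(a),\varphi(b)<+\infty$ and $\varphi$ is proper, so it never takes the value $-\infty$. Hence $x_\lambda\in\mathrm{dom}\,\varphi$. Lower semicontinuity is not needed for this step. It would matter only if one also wanted closedness of the domain, and the statement asks only for an interval.

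\textbf{Part (ii).} Here $\varphi=\varphi^{\Phi_\gamma}=\sup\{\phi:\phi\in\Phi_\gamma,\ \phi\leq\varphi\}$, so $\varphi$ is a pointwise supremum of $\gamma$-paraconvex functions. The key step is a uniform bound at $x_\lambda$. For every minorant $\phi\leq\varphi$ in $\Phi_\gamma$,
\[
\phi(x_\lambda)\leq \lambda\phi(a)+(1-\lambda)\phi(b)+C_\phi\lambda(1-\lambda)|a-b|^\gamma\leq \lambda\varphi(a)+(1-\lambda)\varphi(b)+C_\phi\lambda(1-\lambda)|a-b|^\gamma .
\]
Taking the supremum over $\phi$ would give finiteness of $\varphi(x_\lambda)$, provided the constants $C_\phi$ stay bounded.

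This is the main obstacle: $\Phi_\gamma$ as written allows each $\phi$ its own constant $C_\phi$, and these need not be bounded. I would first resolve it by reading $\Phi_\gamma$ with a fixed constant $C$, in line with Theorem \ref{thm: gamma monotone} and Lemma \ref{lem: paraconvex smooth selection}, where $C$ is fixed. Then the supremum of $\gamma$-paraconvex functions with a common constant $C$ is again $\gamma$-paraconvex with constant $C$, since the defining inequality passes to suprema. Part (ii) then reduces to part (i).

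If the constants are genuinely allowed to vary, I expect the statement can fail. For instance, $\varphi=0$ on $\{0,1\}$ and $+\infty$ elsewhere is the supremum of the parabolas $-n x(x-1)$, which are $2$-paraconvex with constant $n$, yet its domain is not an interval. In that case I would flag that the fixed-constant reading is necessary, and present the proof under it.
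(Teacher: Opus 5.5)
Your part (i) is the paper's argument, and you are right that lower semicontinuity plays no role. For part (ii) you use the same inequality chain as the paper, but you handle it more carefully than the paper does.

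The paper fixes $z=\lambda x+(1-\lambda)y$ with $x,y\in\mathrm{dom}\,\varphi$. For each $\varepsilon>0$ it picks a minorant $\phi_\varepsilon\in\Phi_\gamma$ with $\varphi(z)<\phi_\varepsilon(z)+\varepsilon$. It then concludes $\varphi(z)\le\lambda\varphi(x)+(1-\lambda)\varphi(y)+C_{\phi_\varepsilon}|x-y|^\gamma+\varepsilon<+\infty$. Since $\phi_\varepsilon$ is real-valued, such a minorant exists only if $\varphi(z)<+\infty$, which is exactly what is to be shown. If one instead takes minorants with $\phi(z)\to+\infty$, the bound gives nothing unless the constants $C_\phi$ are uniformly bounded. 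That is precisely the obstacle you isolated. Your fix under a common constant $C$ is correct: $\varphi=\varphi^{\Phi_\gamma}$ is then itself $\gamma$-paraconvex with constant $C$, and (i) applies.

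Your counterexample needs a correction. $\sup_{n\ge1}\bigl(-nx(x-1)\bigr)$ is not the indicator of $\{0,1\}$. For $x\notin[0,1]$ one has $x(x-1)>0$, so $-nx(x-1)$ decreases in $n$ and the supremum is the finite value $-x(x-1)$. The supremum is therefore $+\infty$ on $(0,1)$, $0$ at $0$ and $1$, and $-x(x-1)$ elsewhere. This function is $\Phi_2$-convex, being a supremum of elements of $\Phi_2$ with constants $n$. Its domain $(-\infty,0]\cup[1,+\infty)$ is not an interval, so your conclusion survives with this $\varphi$. The indicator of $\{0,1\}$ is also $\Phi_2$-convex, but only via a larger family such as $-n(z-p)(z-q)$ with $(p,q)\cap\{0,1\}=\emptyset$.

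Note also that $-nx^2$ is not globally $\gamma$-paraconvex for $\gamma<2$: its paraconvexity defect is $n\lambda(1-\lambda)|x-y|^2$, and $n|x-y|^{2-\gamma}$ is unbounded. So your example only covers $\gamma=2$. To cover every $\gamma\in(1,2]$, use $\phi_n(x)=n\sin(\pi x)$. It is $\gamma$-paraconvex with a constant proportional to $n$: use the Lipschitz gradient for $|x-y|\le1$, and the Lipschitz and bounded estimates for $|x-y|\ge1$. Its supremum over $n\ge 1$ has domain $\bigcup_{k\in\mathbb{Z}}[2k-1,2k]$, which is not an interval.
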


\begin{proof}
(i) is a straightforward derivation from the definition of paraconvex function i.e. for $x,y\in\mathrm{dom}\  \varphi$ we have 
\[
\left(\exists C>0,\forall\lambda\in\left[0,1\right]\right)\quad \varphi\left(\lambda x+\left(1-\lambda\right)y\right)\leq\lambda \varphi\left(x\right)+\left(1-\lambda\right)\varphi\left(y\right)+C\Vert x-y\Vert^{\gamma}<+\infty.
\]

(ii) Thanks to the property of abstract convexity, for every $\varepsilon>0,x\in\mathbb{R}$,
there exists $\varphi\geq\phi_{\varepsilon}\in\Phi_{\gamma}$ such that
\[
+\infty>\phi_{\varepsilon}\left(x\right)+\varepsilon> \varphi\left(x\right).
\]
Then for $z=\lambda x+\left(1-\lambda\right)y$ with $x,y\in\mathrm{dom}\ \varphi$,
for every $\varepsilon>0$, there exists $\phi_{\varepsilon}\leq \varphi$
such that 
\begin{align*}
\varphi\left(z\right) & <\phi_{\varepsilon}\left(z\right)+\varepsilon\leq\lambda\phi_{\varepsilon}\left(x\right)+\left(1-\lambda\right)\phi_{\varepsilon}\left(y\right)+C_{\phi_{\varepsilon}}\Vert x-y\Vert^{\gamma}+\varepsilon\\
 & \leq\lambda \varphi\left(x\right)+\left(1-\lambda\right)\varphi\left(y\right)+C_{\phi_{\varepsilon}}\Vert x-y\Vert^{\gamma}+\varepsilon<+\infty.
\end{align*}
\end{proof}
\begin{remark}
It seems to us that weakly convex implies $\Phi_{lsc}^\mathbb{R}$-convex. This cannot be generalized to $\gamma$-paraconvex and $\Phi_\gamma$-convex functions. Hence, under which condition can this be true? One answer would be if $\varphi +\rho\Vert\Vert^\gamma$ is convex, then $\varphi$ is $\Phi_\gamma$-convex.  
\end{remark}

To prove that $\gamma$-subdifferentials is bounded, the following lemma will be useful.

\begin{lemma}
\label{lem: paraconv int epi nonempty}
Consider $\varphi:H\to(-\infty,+\infty]$ be proper $\gamma$-paraconvex and $x_{0}\in O \subset\mathrm{dom}\ \varphi$ where $O$ is an open convex set. Then $\varphi$ is bounded above on a neighborhood of $x_0$ and the interior of $\mathrm{epi}\ \varphi$ is non-empty.
\end{lemma}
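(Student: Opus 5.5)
Before planning, note how the statement must be read. Since $O\subset\mathrm{dom}\ \varphi$ is open and contains $x_0$, the natural reading is the classical one: on a Banach (here Hilbert) space, a convex function finite on an open set is locally bounded above only under some extra hypothesis (lower semicontinuity, or finite dimension). So I would either assume $\varphi$ lower semicontinuous, which is the setting used throughout this section, or work in finite dimension. I will plan the lsc case and mirror the classical argument of \cite[Corollary 8.30]{Bau2011}, adding the paraconvexity error term $C\lambda(1-\lambda)\Vert x-y\Vert^\gamma$.

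\textbf{Step 1 (a closed set with nonempty interior).} Choose $r>0$ with $\overline{B}(x_0,r)\subset O$. For $n\in\mathbb{N}$ define
\[
F_n:=\{x\in \overline{B}(x_0,r):\ \varphi(x)\le n\}.
\]
Each $F_n$ is closed by lower semicontinuity, and their union is $\overline{B}(x_0,r)$ because $\varphi$ is finite on $O$. The ball is a complete metric space, so by Baire's theorem some $F_N$ has nonempty interior. Hence there is a ball $B(z,s)\subset \overline{B}(x_0,r)$ with $\varphi\le N$ on it.

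\textbf{Step 2 (transport the bound to $x_0$).} This is the main obstacle: convexity is replaced by paraconvexity, and the point $z$ need not equal $x_0$. If $z=x_0$ we are done, so assume $z\neq x_0$. Pick $w\in O$ on the ray from $z$ through $x_0$, slightly beyond $x_0$, so that $x_0=\lambda w+(1-\lambda)z$ for some $\lambda\in(0,1)$. This is possible since $O$ is open and convex. For $y$ near $x_0$, write
\[
y=\lambda w+(1-\lambda)z', \qquad z'=z+\frac{y-x_0}{1-\lambda}.
\]
Then $z'\in B(z,s)$ whenever $\Vert y-x_0\Vert<(1-\lambda)s$. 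Applying Definition \ref{def:paraconvex} gives
\[
\varphi(y)\le \lambda\varphi(w)+(1-\lambda)N+C\lambda(1-\lambda)\Vert w-z'\Vert^\gamma.
\]
The last term is uniformly bounded, since $\Vert w-z'\Vert\le \Vert w-z\Vert+s$. Therefore $\varphi$ is bounded above, say by $M$, on $B(x_0,(1-\lambda)s)$. The point to check carefully is that the constant $C$ in Definition \ref{def:paraconvex} is global, so no uniformity issue arises. The price of paraconvexity is only an additive constant.

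\textbf{Step 3 (interior of the epigraph).} With $\varphi\le M$ on $B(x_0,\delta)$, the open set $B(x_0,\delta)\times(M,+\infty)$ is contained in $\mathrm{epi}\ \varphi$. Hence $\mathrm{int}\ \mathrm{epi}\ \varphi\neq\emptyset$.

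If lower semicontinuity is not available, I would instead state the result in finite dimension. There the local bound follows directly: take a simplex in $O$ containing $x_0$ in its interior, and iterate the paraconvex inequality over its vertices. Each step adds a term bounded by $C\,\mathrm{diam}^\gamma$, and there are finitely many steps, so the bound is finite.
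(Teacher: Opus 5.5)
Your proof is correct, but it takes a different route from the paper's. The paper handles the first claim in one line, citing local Lipschitz continuity of paraconvex functions \cite[Proposition 2.2]{jourani1996subdifferentiability}, and handles the epigraph claim by pointing to \cite[Proposition 8.45]{Bau2011}. You instead give a self-contained argument in three steps:
\begin{itemize}
\item Baire category on $\overline{B}(x_0,r)$ gives a ball $B(z,s)$ on which $\varphi\le N$.
\item You transport this bound to $x_0$ by writing $y=\lambda w+(1-\lambda)z'$. There the error term $C\lambda(1-\lambda)\Vert w-z'\Vert^\gamma$ is uniformly bounded because $C$ is global.
\item You conclude with $B(x_0,\delta)\times(M,+\infty)\subset\mathrm{epi}\,\varphi$.
\end{itemize}
Your opening caveat is not pedantry. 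A discontinuous linear functional on an infinite-dimensional $H$ is convex, hence $\gamma$-paraconvex for every $C>0$. It is finite on $O=H$ and bounded above on no neighborhood of any point. So the lemma is false as stated, and a hypothesis such as lower semicontinuity must be added, or $\dim H<\infty$, where your simplex argument needs no semicontinuity at all. The same counterexample shows that the local Lipschitz result the paper invokes cannot hold without such a hypothesis, so the paper's one-line proof silently relies on it. Essentially, it outsources exactly the Baire argument you wrote out.

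What the paper's route buys is the stronger conclusion, local Lipschitz continuity, which is what Proposition \ref{prop: paraconv subgrad bounded} actually uses later. What your route buys is an elementary, transparent proof of precisely what the lemma claims, with the necessary hypothesis made explicit. If you want the Lipschitz property as well, it follows from your upper bound by the usual two extra steps:
\begin{itemize}
\item Paraconvexity applied to $x_0=\tfrac12 y+\tfrac12(2x_0-y)$ gives a lower bound near $x_0$.
\item Extending the segment $[x,y]$ to a point $z$ at fixed distance beyond $y$ gives a Lipschitz estimate. The error term only contributes a bounded multiple of $\Vert y-x\Vert$.
\end{itemize}
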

\begin{proof}
The first claim comes from Lipschitz continuity of $\varphi$ around $x_0$ which leads to $\varphi$ be bounded above around $x_0$ (see \cite[Proposition 2.2]{jourani1996subdifferentiability}).
The proof of the second claim follows the lines of the proof of \cite[Proposition 8.45]{Bau2011}.
\end{proof}
We present the result for paraconvex functions.
\begin{proposition}
\label{prop: paraconv subgrad bounded}
Let $\varphi:H\to(-\infty,+\infty]$ be proper $\gamma$-paraconvex with $1<\gamma\leq2$ and $x_{0}\in O \subset\mathrm{dom}\ \varphi$ where $O$ is an open convex set. 
Assume 
$N_{\gamma}\left(x_0,\varphi\left(x_0\right);\mathrm{epi}\  \varphi\right)\backslash\left\{ \left(0,0\right)\right\} \neq \emptyset$.
Then $\partial_{\gamma}\varphi\left(x_{0}\right)$ is non-empty and weakly compact. Moreover, there exists $\rho>0$ such that $\partial_{\gamma} \varphi (B(x_0,\rho))$ is bounded.
\end{proposition}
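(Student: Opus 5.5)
The argument imitates the convex case, \cite[Proposition 16.14]{Bau2011}, with the $\gamma$-error terms carried along. The key input is Lemma \ref{lem: paraconv int epi nonempty}: $\varphi$ is bounded above near $x_0$, so there are $\delta>0$ and $M\in\mathbb{R}$ with $B(x_0,\delta)\subset O$ and $\varphi\le M$ on $B(x_0,\delta)$. Lipschitz continuity of $\varphi$ near $x_0$ (\cite[Proposition 2.2]{jourani1996subdifferentiability}) also gives, after shrinking $\delta$, a lower bound $\varphi\ge m$ on $B(x_0,\delta)$.

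\emph{Nonemptiness.} Take $(v,-\beta)\in N_{\gamma}\left(x_0,\varphi(x_0);\mathrm{epi}\ \varphi\right)\setminus\{(0,0)\}$. By definition, for some $C>0$ and all $(y,r)\in\mathrm{epi}\ \varphi$,
\[
\langle v,y-x_0\rangle-\beta(r-\varphi(x_0))\le C\left(\Vert y-x_0\Vert^2+|r-\varphi(x_0)|^2\right)^{\gamma/2}.
\]
\begin{enumerate}[label=\roman*.]
\item Taking $y=x_0$ and $r\to+\infty$ forces $\beta\ge 0$.
\item Suppose $\beta=0$. Then $\langle v,y-x_0\rangle\le C\Vert y-x_0\Vert^\gamma$ on $B(x_0,\delta)\subset\mathrm{dom}\ \varphi$. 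Putting $y=x_0+tv/\Vert v\Vert$ with small $t>0$ gives $t\Vert v\Vert\le Ct^\gamma$, which is impossible for $\gamma>1$ unless $v=0$. This contradicts $(v,-\beta)\neq(0,0)$, so $\beta>0$.
\item Normalize to $\beta=1$ and take $r=\varphi(y)$ with $y\in B(x_0,\delta)$. Since $|\varphi(y)-\varphi(x_0)|\le L\Vert y-x_0\Vert$ locally, the right-hand side is at most $C'\Vert y-x_0\Vert^\gamma$. This gives the local inequality $\varphi(y)-\varphi(x_0)\ge\langle v,y-x_0\rangle-C'\Vert y-x_0\Vert^\gamma$, so $v\in\partial_{\gamma,\mathrm{loc}}\varphi(x_0)$.
\end{enumerate}
To globalize, use paraconvexity along segments. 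For $y$ far from $x_0$, apply Definition \ref{def:paraconvex} to $x_0$, $y$ and the point $x_0+\lambda(y-x_0)$ inside the ball, then let the local inequality propagate. The resulting error is of order $C\lambda(1-\lambda)\Vert y-x_0\Vert^\gamma/\lambda$, which is again bounded by a multiple of $\Vert y-x_0\Vert^\gamma$. This is the step I expect to be the most delicate: the constant must stay uniform, so I would fix $\lambda=\delta/(2\Vert y-x_0\Vert)$ and track the powers explicitly.

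\emph{Boundedness.} Fix $\rho=\delta/2$, and let $x\in B(x_0,\rho)$ and $u\in\partial_\gamma\varphi(x)$. For a unit vector $h$, put $y=x+\rho h\in B(x_0,\delta)$. The subgradient inequality gives
\[
\rho\langle u,h\rangle\le\varphi(y)-\varphi(x)+C\rho^\gamma\le M-m+C\rho^\gamma.
\]
Taking the supremum over $h$ yields $\Vert u\Vert\le(M-m)/\rho+C\rho^{\gamma-1}$.

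The constant $C$ here depends a priori on $u$. I would handle this by appealing to Theorem \ref{thm: gamma monotone}, so that a single paraconvexity constant of $\varphi$ serves for all subgradients. Equivalently, I would work with the subdifferential attached to the fixed constant $C$ of $\varphi$, noting that the paraconvexity inequality yields subgradients with that same constant. I regard this uniformity of $C$ as the main subtlety.

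\emph{Weak compactness.} For fixed $C$, the set $\partial_\gamma\varphi(x_0)$ is an intersection over $y$ of closed half-spaces in $u$, hence closed and convex. It is bounded by the previous step, so it is weakly compact in the Hilbert space $H$.
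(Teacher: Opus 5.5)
Your plan follows the paper's proof step for step: a nonzero $\gamma$-normal to the epigraph, local Lipschitz continuity to exclude a horizontal normal, normalization to a local $\gamma$-subgradient, globalization, a norm bound from local boundedness of $\varphi$, and weak compactness from closed, convex and bounded. There are two differences. You globalize by a segment argument instead of citing \cite[Proposition 3.1]{jourani1996subdifferentiability}, and you use the bounds $M,m$ instead of the Lipschitz constant. Two steps, however, need repair.

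\textbf{Step (i).} As written it does not work. With $y=x_0$ and $r=\varphi(x_0)+\eta$ the inequality reads $-\beta\eta\le C\eta^{\gamma}$. Since $\gamma>1$, the right-hand side dominates as $\eta\to+\infty$ whatever the sign of $\beta$, so no contradiction arises. You must let $\eta\downarrow0$ instead, which gives $\beta\ge -C\eta^{\gamma-1}\to0$; this is what the paper does.

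\textbf{Uniform constant.} Theorem \ref{thm: gamma monotone} is the wrong tool. It runs from nonempty subdifferentials to paraconvexity, needs lower semicontinuity and nonemptiness on all of $\mathrm{dom}\ \varphi$, and says nothing about a common constant for all subgradients. The correct justification is your own segment argument, pushed to the limit. Suppose $u$ satisfies the $\gamma$-subgradient inequality at $x\in\mathrm{dom}\ \varphi$ on some ball, with some constant $C'$. For $z=x+\lambda(y-x)$ in that ball, paraconvexity gives
\[
\varphi(y)-\varphi(x)\ge\langle u,y-x\rangle-\bigl(C'\lambda^{\gamma-1}+C_\varphi(1-\lambda)\bigr)\Vert y-x\Vert^{\gamma}.
\]
Letting $\lambda\downarrow0$ yields the inequality with the constant $C_\varphi$ of Definition \ref{def:paraconvex} alone, for every $y\in H$.

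This single computation does three things:
\begin{itemize}
\item It makes the globalization trivial rather than ``delicate''.
\item It shows that $\partial_\gamma\varphi(x)=\{u:\ \varphi(y)-\varphi(x)\ge\langle u,y-x\rangle-C_\varphi\Vert y-x\Vert^\gamma\ \text{for all } y\}$. This justifies your bound $\Vert u\Vert\le (M-m)/\rho+C_\varphi\rho^{\gamma-1}$, which is needed because with $M-m$ you cannot shrink the step to kill the constant.
\item It justifies the closedness used for weak compactness, which the paper asserts without proof.
\end{itemize}
With the Lipschitz bound the paper uses, the dependence on the constant could instead be removed by letting $\Vert z\Vert\to0$.
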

\begin{proof}
Observe that $\left(x_{0},\varphi \left(x_{0}\right)\right)$ belongs to the boundary of $\mathrm{epi}\ \varphi$ as $(x_0,\varphi(x_0)-\varepsilon)\notin \mathrm{epi }\ \varphi$ for $\varepsilon>0$. 
By assumption, let us take $\left(u,v\right)\in N_{\gamma}\left(x_0,\varphi\left(x_0\right);\mathrm{epi}\ \varphi\right)\backslash\left(0,0\right)$.
For every $y\in\mathrm{dom}\ \varphi,\eta\geq0$, we have $\left(x_0,\varphi\left(y_0\right)+\eta\right)\in\mathrm{epi}\ \varphi$ and hence, 
\begin{equation}
v\eta \leq C\eta^\gamma.
\label{eq: subgrad normal cone inequality}
\end{equation}
Letting $\eta\to 0$, we obtain that $v$ musts be non-positive. If $v=0$ then $u$ musts be zero due to Lipschitz continuity of $\varphi$ around $x_0$.
This contradicts $\left(u,v\right)\neq\left(0,0\right)$. Hence $v<0$, we obtain that 
\[
\left(\frac{u}{\left|v\right|},-1\right)=\frac{1}{\left|v\right|}\left(u,v\right)\in N_{\gamma}\left(x_0,\varphi\left(x_0\right);\mathrm{epi}\ \varphi\right),
\]
which implies $u/\left|v\right|\in\partial_{\gamma, \mathrm{loc}} \varphi\left(x_0\right)\ne\emptyset$ which is the local $\gamma$-subdifferentials of $\varphi$ at $x_0$.
Then by \cite[Proposition 3.1]{jourani1996subdifferentiability}, $\gamma$-paraconvex functions have globalization property which implies the global $\gamma$-subdifferentials $\partial_{\gamma} \varphi(x_0)$ is non-empty.

On the other hand, we also know that $\varphi$ is Lipschitz continuous in the neighborhood of $x_0$. There exist $\rho,\beta>0$ such that $\left|\varphi\left(y\right)-\varphi\left(z\right)\right|\leq\beta\left\Vert y-z\right\Vert $ for all $y,z\in B\left(x_0,2\rho\right)$. 
Take $y\in B\left(x_0,\rho\right),w_y\in\partial_{\gamma} \varphi\left(y\right)$. By the definition of $\gamma$-subdifferential at $\varphi(y)$, we have 
\[
(\exists C>0, \forall z\in H) \quad \varphi(z)- \varphi(y) \geq \langle w_y,z-y\rangle -C\Vert z-y\Vert^\gamma.
\]
Then for all $z\in B\left(0,\rho\right), y\in B(x_0,\rho)$, we obtain $y,y+z\in B(x_0,2\rho)$. Then, 
\begin{align*}
\left\langle w_y,z\right\rangle & \leq \varphi\left(y+z\right)-\varphi\left(y\right)+C\left\Vert z\right\Vert ^{\gamma}\\
&\leq\beta\left\Vert z\right\Vert +C\left\Vert z\right\Vert ^{\gamma} \leq \beta +C\rho^{\gamma-1}.
\end{align*}
We get $\left\Vert w_y\right\Vert \leq\beta +C\rho^{\gamma-1}$. 
Then $\partial_{\gamma} \varphi\left(y\right)\subset\partial_{\gamma, \mathrm{loc}} \varphi\left(B\left(x_0,\rho\right)\right)\subset B\left(0,\beta \rho+C\rho^{\gamma}\right)$ for all $y\in B(x_0,\rho)$.
Thus, $\partial_{\gamma} \varphi\left(x_0\right)$ is bounded.
Moreover, $\partial_{\gamma} \varphi\left(x_0\right)$ is
also closed and convex so it is weakly compact.
\end{proof}

\subsection{Continuity of Paraconvex functions on the real line}

We state an auxiliary result for paraconvex functions which is crucial for the proof of Lemma \ref{lem: paraconv subdiff real line}.
\begin{proposition}[Three points paraconvexity]
\label{prop:3-pts paraconvex}
Let $f:\mathbb{R}\to (-\infty,+\infty]$ be $\gamma$-paraconvex with $\gamma>1$, then for any $x<y<z$, there exists a constant $C>0$ such that
\begin{equation}
\frac{f\left(y\right)-f\left(x\right)}{y-x}-C\left(z-y\right)\left(z-x\right)^{\gamma-2}\leq\frac{f\left(z\right)-f\left(x\right)}{z-x}\leq\frac{f\left(z\right)-f\left(y\right)}{z-y}+C\left(y-x\right)\left(z-x\right)^{\gamma-2}.
\end{equation}
\end{proposition}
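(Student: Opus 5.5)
The plan is to apply Definition \ref{def:paraconvex} directly, with $y$ written as a convex combination of $x$ and $z$. Put $\lambda=\frac{z-y}{z-x}\in(0,1)$, so $y=\lambda x+(1-\lambda)z$ and $1-\lambda=\frac{y-x}{z-x}$. Paraconvexity then gives
\[
f(y)\le \lambda f(x)+(1-\lambda)f(z)+C\lambda(1-\lambda)(z-x)^{\gamma}
= \lambda f(x)+(1-\lambda)f(z)+C\frac{(z-y)(y-x)}{(z-x)^{2}}(z-x)^{\gamma}.
\]
The constant $C$ here is the one from the definition, so it is uniform in $x,y,z$. If $f(x)$ or $f(z)$ equals $+\infty$, the inequality involving it is trivial or meaningless. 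I would therefore state the result for points of $\mathrm{dom}\, f$, which the definition forces to contain $y$ once it contains $x$ and $z$.

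For the left inequality, subtract $f(x)$ from both sides. Since $\lambda f(x)-f(x)=-(1-\lambda)f(x)$, this gives
\[
f(y)-f(x)\le (1-\lambda)\bigl(f(z)-f(x)\bigr)+C(z-y)(y-x)(z-x)^{\gamma-2}.
\]
Dividing by $y-x>0$ and using $\frac{1-\lambda}{y-x}=\frac{1}{z-x}$ yields
\[
\frac{f(y)-f(x)}{y-x}\le\frac{f(z)-f(x)}{z-x}+C(z-y)(z-x)^{\gamma-2},
\]
which is exactly the first inequality.

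For the right inequality, subtract $f(z)$ instead:
\[
f(y)-f(z)\le\lambda\bigl(f(x)-f(z)\bigr)+C(z-y)(y-x)(z-x)^{\gamma-2}.
\]
Dividing by $z-y>0$ with $\frac{\lambda}{z-y}=\frac1{z-x}$ gives
\[
\frac{f(y)-f(z)}{z-y}\le \frac{f(x)-f(z)}{z-x}+C(y-x)(z-x)^{\gamma-2}.
\]
Multiplying by $-1$ and rearranging produces
\[
\frac{f(z)-f(x)}{z-x}\le\frac{f(z)-f(y)}{z-y}+C(y-x)(z-x)^{\gamma-2},
\]
the second inequality.

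No step here is a real obstacle; the proof is a direct algebraic rearrangement of the definition. The only places needing care are the exact cancellation of $\lambda(1-\lambda)$ against $(z-x)^{\gamma}$, which leaves $(z-x)^{\gamma-2}$, and the handling of infinite values.
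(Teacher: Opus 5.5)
Your proof is correct and matches the paper's: both write $y=\lambda x+(1-\lambda)z$ with $\lambda=\frac{z-y}{z-x}$, apply the paraconvexity inequality once, and rearrange to get each inequality. The only differences are cosmetic. The paper first multiplies through by $z-x$ and then groups terms, while you subtract $f(x)$ or $f(z)$ and divide directly. Your restriction to points of $\mathrm{dom}\, f$ is a sensible clarification that the paper leaves implicit.
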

\begin{proof}
Let us set $\lambda = \frac{z-y}{z-x}\in (0,1)$, so that $y=\lambda x +(1-\lambda)z$. By the definition of paraconvexity, there exists $C>0$ such that
\begin{align}
    f(y) & \leq \lambda f(x)+(1-\lambda) f(z) +C\lambda (1-\lambda) \Vert x-z\Vert^\gamma \nonumber\\
    &= \frac{z-y}{z-x} f(x) +\frac{y-x}{z-x} f(z) +C(z-y)(y-x) (z-x)^{\gamma-2}.\nonumber
\end{align}
Multiplying both sides by $(z-x)$, we obtain
\begin{equation}
\label{eq: para-convex}
    (z-x) f(y) \leq (z-y) f(x) +(y-x) f(z) +C(z-y)(y-x) (z-x)^{\gamma-1}.
\end{equation}
From \eqref{eq: para-convex}, grouping $f(y) -f(x)$, we have
\begin{equation}
f(y)-f(x) \leq \frac{y-x}{z-x} (f(z)-f(x)) +C(z-y)(y-x) (z-x)^{\gamma-2}. \nonumber
\end{equation}
Divide both sides by $(y-x)$, we obtain the first inequality. The second inequality can be proved by grouping $f(z)-f(x)$.
\end{proof}
We exploit Proposition \ref{prop:3-pts paraconvex} to prove that paraconvex functions have the same property as convex functions for upper and lower limit at the end point \cite[Proposition 3.1.2]{hiriart2013convex}.
\begin{proposition}
\label{prop: paraconvex-cont end point}
Let $f:\mathbb{R}\to (-\infty,+\infty]$ be $\gamma$-paraconvex with $\gamma>1$ and the domain of $f$ has non-empty interior with $a\in \mathbb{R}$ is the left-end point. Then the right-limit $f(a_+):= \lim_{x\downarrow a} f(x)$ exists and $f(a)\geq f(a_+)$.

Similarly, for the right-end point $b\in\mathbb{R}$, $f(b_-)$ exists and $f(b) \geq f(b_-)$.
\end{proposition}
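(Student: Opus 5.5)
The plan is to imitate the classical convex argument of \cite[Proposition 3.1.2]{hiriart2013convex}, with Proposition \ref{prop:3-pts paraconvex} replacing monotonicity of slopes. I treat the left end point $a$; the right end point $b$ follows by applying the same argument to $x\mapsto f(-x)$, which is again $\gamma$-paraconvex with the same constant. Since $\mathrm{dom} f$ is an interval with nonempty interior (by the domain lemma above, or directly from paraconvexity), fix $z_0>a$ with $[a,z_0]\subset \mathrm{dom}f$ and let $C$ be the paraconvexity constant.

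\textbf{Step 1 (upper bound near $a$).} For $x\in(a,z_0)$ write $x=\lambda a+(1-\lambda)z_0$ with $\lambda=\frac{z_0-x}{z_0-a}$. The definition of paraconvexity gives
\[
f(x)\leq \lambda f(a)+(1-\lambda)f(z_0)+C\lambda(1-\lambda)(z_0-a)^\gamma .
\]
As $x\downarrow a$ we have $\lambda\to1$, so $\limsup_{x\downarrow a} f(x)\leq f(a)$. This already gives the inequality $f(a)\ge f(a_+)$, provided the limit exists.

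\textbf{Step 2 (existence of the limit).} Consider the slope $s(x):=\frac{f(z_0)-f(x)}{z_0-x}$ for $x\in(a,z_0)$. I will show that $s$ is "almost monotone". For $a<x<y<z_0$, apply Proposition \ref{prop:3-pts paraconvex} to the triple $x<y<z_0$; its second inequality gives
\[
\frac{f(z_0)-f(x)}{z_0-x}\le \frac{f(z_0)-f(y)}{z_0-y}+C(y-x)(z_0-x)^{\gamma-2}.
\]
Hence $s(x)\le s(y)+C(y-x)(z_0-x)^{\gamma-2}$. Define the corrected function
\[
\tilde s(x):=s(x)+C\,h(x),
\]
where $h$ is chosen so that $\tilde s$ becomes nondecreasing. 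Since $\gamma\le2$ and $z_0-x\le z_0-a$, the error $C(y-x)(z_0-x)^{\gamma-2}$ is bounded by $C(y-x)(z_0-y)^{\gamma-2}$, and $z_0-y\ge (z_0-a)/2$ whenever $y$ stays in the left half of $(a,z_0)$. So on $(a,(a+z_0)/2]$ the error is at most $L(y-x)$ with $L=C((z_0-a)/2)^{\gamma-2}$. Taking $h(x)=-Lx/C$, that is $\tilde s(x)=s(x)-Lx$, makes $\tilde s$ nondecreasing on $(a,(a+z_0)/2]$. A monotone function has a one-sided limit in $[-\infty,+\infty)$ as $x\downarrow a$, so $\lim_{x\downarrow a}s(x)$ exists in $[-\infty,+\infty)$. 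Since $f(x)=f(z_0)-(z_0-x)s(x)$, the limit $f(a_+)$ exists in $(-\infty,+\infty]$.

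\textbf{Step 3 (finiteness).} Step 1 gives $f(a_+)\le f(a)<+\infty$, so the limit is finite, or else $-\infty$. To rule out $-\infty$ I expect to need a lower bound. The first inequality of Proposition \ref{prop:3-pts paraconvex}, applied with the triple $a<x<z_0$, yields
\[
f(x)\ge f(a)+(x-a)\Big[\tfrac{f(z_0)-f(a)}{z_0-a}-C(z_0-x)(z_0-a)^{\gamma-2}\Big],
\]
so $f$ is bounded below near $a$.

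\textbf{Main obstacle.} The delicate point is Step 2: the error term in Proposition \ref{prop:3-pts paraconvex} carries the factor $(z_0-x)^{\gamma-2}$, which blows up only when the points approach $z_0$, not $a$. Restricting to a half-interval near $a$ controls it uniformly and turns it into a Lipschitz-type perturbation that can be absorbed into a linear correction. I will check carefully that the direction of the inequalities indeed produces monotonicity of the corrected function $\tilde s(x)=s(x)-Lx$.
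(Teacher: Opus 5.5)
Your plan is the paper's: anchor at a point $z_0$ to the right of $a$, use the second inequality of Proposition~\ref{prop:3-pts paraconvex} to make the slope $s(x)=\frac{f(z_0)-f(x)}{z_0-x}$ monotone after a correction, and get $f(a_+)\le f(a)$ from paraconvexity. As written, however, Step~2 fails because the correction has the wrong sign. For $a<x<y\le\frac{a+z_0}{2}$ you correctly get $s(x)\le s(y)+L(y-x)$. This rearranges to $s(x)+Lx\le s(y)+Ly$, so the nondecreasing function is $\tilde s(x)=s(x)+Lx$ (i.e.\ $h(x)=Lx/C$). The paper's corrected slope $q(y)+Cy(z-y)^{\gamma-2}$ likewise carries a plus sign. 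Monotonicity of $s(x)-Lx$ would require $s(x)\le s(y)-L(y-x)$, which nothing provides. With the sign fixed, the rest of Step~2 is correct. It gives $\lim_{x\downarrow a}s(x)\in[-\infty,+\infty)$, hence $f(a_+)=f(z_0)-(z_0-a)\lim_{x\downarrow a}s(x)\in(-\infty,+\infty]$.

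Step~3 is wrong, but you do not need it. Both inequalities of Proposition~\ref{prop:3-pts paraconvex} are rearrangements of a single \emph{upper} bound on $f$ at the middle point. For the triple $a<x<z_0$ the first one reads
\[
f(x)\le f(a)+(x-a)\Big[\frac{f(z_0)-f(a)}{z_0-a}+C(z_0-x)(z_0-a)^{\gamma-2}\Big],
\]
which is essentially Step~1 again, not the lower bound you wrote. No lower bound is needed, because Step~2 already excludes $-\infty$; your phrase ``finite, or else $-\infty$'' contradicts your own Step~2. Steps~1 and~2 together give existence of $f(a_+)$, the inequality $f(a_+)\le f(a)$, and finiteness when $a\in\mathrm{dom}\,f$. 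Note also that the statement does not assume $a\in\mathrm{dom}\,f$. If $f(a)=+\infty$ the inequality is trivial and Step~2 only uses $(a,z_0]\subset\mathrm{dom}\,f$, so you should not require $[a,z_0]\subset\mathrm{dom}\,f$.

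Once repaired, your argument is a cleaner version of the paper's. The paper's correction $Cy(z-y)^{\gamma-2}$ only works for $y\ge 0$, which forces a case split on the sign of $a$. For $a<0$ it switches to the slope anchored at $a$ with correction $Cy(y-a)^{\gamma-2}$, which needs $f(a)<+\infty$ and is unbounded near $a$ when $\gamma<2$. By restricting to the left half of $(a,z_0)$, where $(z_0-y)^{\gamma-2}$ is uniformly bounded, you turn the error into a Lipschitz term and remove the case split. Your Step~1 also bounds $\limsup_{x\downarrow a}f(x)$ directly from the definition, whereas the paper presupposes the limit and passes to it in the three-point inequality.
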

\begin{proof}
Let $x_0\in \mathrm{int}\ \mathrm{dom}\ f$ and $t_0=x_0-a>0$. Let us set
\[
q(t) := \frac{f(x_0-t) -f(x_0)}{t},
\]
for $t>0$ and $l:= \lim_{t\uparrow t_0} q(t)$. We will prove the existence of $l$ at the end.
Notice that $t\uparrow t_0$ then $x_0-t \downarrow a$. As $f$ is $\gamma$-paraconvex, it is locally Lipschitz continuous on $\mathrm{int} \mathrm{dom} f$ and so is continuous on the interior of the domain. Then as $t\uparrow t_0$, we have 
\[
f(x_0 -t) = f(x_0) +q(t)t \to f(x_0) + (x_0-a) l = f(a_+)\in (-\infty,+\infty].
\]
Now we use Proposition \ref{prop:3-pts paraconvex} with $z=x_0,y=x_0-t, x=a$ to obtain 
\[
\frac{f\left(x_0\right)-f\left(a\right)}{x_0 -a}\leq\frac{f\left(x_0\right)-f\left(x_0-t\right)}{t}+C\left(x_0 -t -a\right)\left(x_0 -a\right)^{\gamma-2}.
\]
Let $t\uparrow t_0$, we obtain
\[
\frac{f\left(x_0\right)-f\left(a\right)}{x_0 -a}\leq\frac{f\left(x_0\right)-f\left(a_+\right)}{x_0-a},
\]
which implies $f(a_+)\leq f(a)$ so the limit $f(a_+)$ exists and finite. 

Now let us prove the existence of the left side limit of $f$ at $a$. Depending on the sign of $a$, we construct a monotone operator so which guarantees one-sided limit exists.

Generally, let us consider $x<y<z$ and divide into two cases. 
If $y\geq 0$, set $q\left(x\right)=\frac{f\left(z\right)-f\left(x\right)}{z-x}$, using Proposition \ref{prop:3-pts paraconvex} second inequality, we obtain
\[
q\left(x\right)\leq q\left(y\right)+C\left(y-x\right)\left(z-x\right)^{\gamma-2}.
\]
As $z-x\geq z-y$, we bound the above inequality with
\[
q\left(x\right)+Cx\left(z-x\right)^{\gamma-2}\leq q\left(y\right)+Cy\left(z-y\right)^{\gamma-2},
\]
so the function $h\left(y\right)=q\left(y\right)+Cy\left(z-y\right)^{\gamma-2}$ is monotone with fixed $z$.

If $y<0$, we use the first inequality of Proposition \ref{prop:3-pts paraconvex}, set $q\left(y\right)=\frac{f\left(y\right)-f\left(x\right)}{y-x}$ and obtain  
\[
q\left(y\right)+Cy\left(y-x\right)^{\gamma-2} \leq q\left(z\right)+Cz\left(z-x\right)^{\gamma-2},
\]
which implies the monotonicity of $h(y) = q\left(y\right)+Cy\left(y-x\right)^{\gamma-2}$ with fixed $x$. Hence, for $a\geq 0$, the first case applied and when $a<0$, we can take $x=a<y<0$ and use the second case. For right-end point $b$, the same argument can be applied.
\end{proof}
\section{Conclusions}
In Theorem \ref{thm: prox relation general}, we provide a characterization of a mapping $f$, which is a proximity operator in the context of abstract convex functions.
Specifically, Corollary \ref{cor: lsc prox relation} gives us the characterization of the proximity operator in the sense of the definition provided in \cite[Formula 22]{bednarczuk2025proximal} (see also Example \ref{exm1} of the present paper). 
This definition refers to the class $\Phi_{lsc}^\mathbb{R}$-convex functions. Some examples of the functions $D(\cdot,\cdot)$ from \cite{gribonval2020characterization} can also be applied in the context of Theorem \ref{thm: prox relation general} in the present paper.

\paragraph{Disclosure Statement:} The authors report there are no competing interests to declare.

\bibliographystyle{plain}
\bibliography{references}

@BOOK{Bau2011,
author = {Bauschke, Heinz H. and Combettes, Patrick L.},
title = {Convex Analysis and Monotone Operator Theory in {H}ilbert Spaces},
year = {2017},
isbn = {978-3-319-48310-8},
publisher = {Springer Publishing Company, Incorporated},
edition = {2nd}}

@BOOK{Rub2013,
  author    = {Rubinov, Alexander M},
  publisher = {Springer Science \& Business Media},
  title     = {Abstract convexity and global optimization},
  year      = {2013},
  volume    = {44},
}

@BOOK{Pall2013,
  title={Foundations of mathematical optimization: convex analysis without linearity},
  author={Pallaschke, Diethard Ernst and Rolewicz, Stefan},
  volume={388},
  year={2013},
  publisher={Springer Science \& Business Media}
}

@ARTICLE{andra2002,
  title={A survey of methods of abstract convex programming},
  author={Andramonov, Mikhail},
  journal={Journal of Statistics and Management Systems},
  volume={5},
  number={1-3},
  pages={21--37},
  year={2002},
  publisher={Taylor \& Francis}
}

@ARTICLE{moreau1970,
  title={Inf-convolution, sous-additivit{\'e}, convexit{\'e} des fonctions num{\'e}riques},
  author={Moreau, Jean Jacques},
  journal={Journal de Math{\'e}matiques Pures et Appliqu{\'e}es},
  year={1970}
}

@article{davis2019,
  title={Stochastic model-based minimization of weakly convex functions},
  author={Davis, Damek and Drusvyatskiy, Dmitriy},
  journal={SIAM Journal on Optimization},
  volume={29},
  number={1},
  pages={207--239},
  year={2019},
  publisher={SIAM}
}

@article{jourani1996subdifferentiability,
  title={Subdifferentiability and subdifferential monotonicity of 1-paraconvex functions},
  author={Jourani, Abderrahim},
  journal={Control and Cybernetics},
  volume={25},
  number={4},
  year={1996},
  pages={721--737}
}

@article{syga2019global,
  title={On global properties of lower semicontinuous quadratically minorized functions},
  author={Syga, Monika},
  journal={arXiv preprint arXiv:1912.04644},
  year={2019}
}

@article{moreau1965proximite,
  title={Proximit{\'e} et dualit{\'e} dans un espace hilbertien},
  author={Moreau, Jean-Jacques},
  journal={Bulletin de la Soci{\'e}t{\'e} math{\'e}matique de France},
  volume={93},
  pages={273--299},
  year={1965}
}

@article{kiwiel1997proximal,
  title={Proximal minimization methods with generalized Bregman functions},
  author={Kiwiel, Krzysztof C},
  journal={SIAM journal on control and optimization},
  volume={35},
  number={4},
  pages={1142--1168},
  year={1997},
  publisher={SIAM}
}

@article{rolewicz1979gamma,
  title={On $\gamma$-paraconvex multifunctions},
  author={Rolewicz, Stefan},
  journal={Math. Japonica},
  volume={24},
  pages={293--300},
  year={1979}
}

@book{hiriart2013convex,
  title={Convex analysis and minimization algorithms II: Advanced Theory and Bundle Methods},
  author={Hiriart-Urruty, Jean-Baptiste and Lemar{\'e}chal, Claude},
  volume={306},
  year={2013},
  publisher={Springer science \& business media}
}

@book{clarke2008nonsmooth,
  title={Nonsmooth analysis and control theory},
  author={Clarke, Francis H and Ledyaev, Yuri S and Stern, Ronald J and Wolenski, Peter R},
  volume={178},
  year={2008},
  publisher={Springer Science \& Business Media}
}

@article{chambolle2011first,
  title={A first-order primal-dual algorithm for convex problems with applications to imaging},
  author={Chambolle, Antonin and Pock, Thomas},
  journal={Journal of mathematical imaging and vision},
  volume={40},
  pages={120--145},
  year={2011},
  publisher={Springer}
}

@article{millan2024global,
  title={Global minimisation of nonconvex functions by generalising the mirror descent method},
  author={Mill{\'a}n, Reinier D{\'\i}az and Ugon, Julien},
  journal={arXiv preprint arXiv:2402.04281},
  year={2024}
}

@article{bednarczuk2025proximal,
  title={Proximal Algorithms for a class of abstract convex functions},
  author={Bednarczuk, Ewa and Lorenz, Dirk and Tran, The Hung},
  journal={Set-Valued and Variational Analysis},
  volume={33},
  number={1},
  pages={5},
  year={2025},
  publisher={Springer}
}

@article{hoheisel2010proximal,
  title={On proximal point-type algorithms for weakly convex functions and their connection to the backward euler method},
  author={Hoheisel, Tim and Laborde, Maxime and Oberman, Adam},
  journal={Optimization Online},
  year={2010}
}

@article{jourani2014differential,
  title={Differential properties of the Moreau envelope},
  author={Jourani, Abderrahim and Thibault, Lionel and Zagrodny, Dariusz},
  journal={Journal of Functional Analysis},
  volume={266},
  number={3},
  pages={1185--1237},
  year={2014},
  publisher={Elsevier}
}

@book{berge1877topological,
  title={{Topological spaces: Including a treatment of multi-valued functions, vector spaces and convexity}},
  author={Berge, Claude},
  year={1877},
  publisher={Oliver \& Boyd},
  address={Edinburgh}
}

@article{gribonval2020characterization,
  title={A characterization of proximity operators},
  author={Gribonval, R{\'e}mi and Nikolova, Mila},
  journal={Journal of Mathematical Imaging and Vision},
  volume={62},
  number={6},
  pages={773--789},
  year={2020},
  publisher={Springer}
}

@article{rahimi2024projected,
  title={{Projected subgradient methods for paraconvex optimization: Application to robust low-rank matrix recovery}},
  author={Rahimi, Morteza and Ghaderi, Susan and Moreau, Yves and Ahookhosh, Masoud},
  journal={arXiv preprint arXiv:2501.00427},
  year={2024}
}

@book{evans2025measure,
  title={Measure theory and fine properties of functions},
  author={Evans, Lawrence C},
  year={2025},
  publisher={Chapman and Hall/CRC}
}

@article{rakotomandimby2026subgradient,
  title={{Subgradient selector in the generalized cutting plane method with an application to sparse optimization: S. Rakotomandimby et al.}},
  author={Rakotomandimby, Seta and Chancelier, Jean-Philippe and De Lara, Michel and Le Franc, Adrien},
  journal={Optimization Letters},
  volume={20},
  number={2},
  pages={473--490},
  year={2026},
  publisher={Springer}
}

@article{bauschke2018regularizing,
  title={Regularizing with Bregman--Moreau envelopes},
  author={Bauschke, Heinz H and Dao, Minh N and Lindstrom, Scott B},
  journal={SIAM Journal on Optimization},
  volume={28},
  number={4},
  pages={3208--3228},
  year={2018},
  publisher={SIAM}
}

@article{kabgani2025moreau,
  title={Moreau envelope and proximal-point methods under the lens of high-order regularization},
  author={Kabgani, Alireza and Ahookhosh, Masoud},
  journal={Set-Valued and Variational Analysis},
  volume={33},
  number={4},
  pages={47},
  year={2025},
  publisher={Springer}
}

@article{laude2023anisotropic,
  title={Anisotropic proximal point algorithm},
  author={Laude, Emanuel and Patrinos, Panagiotis},
  journal={arXiv preprint arXiv:2312.09834},
  year={2023}
}

@article{bednarczuk2026outer,
  title={Outer approximation scheme for weakly convex constrained optimization problems},
  author={Bednarczuk, Ewa M and Bruccola, Giovanni and Pesquet, Jean-Christophe and Rutkowski, Krzysztof},
  journal={Journal of Global Optimization},
  volume={94},
  number={4},
  pages={1137--1166},
  year={2026},
  publisher={Springer}
}

@article{rolewicz2000cyclic,
  title={On cyclic $\alpha$ ({\textperiodcentered})-monotone multifunctions},
  author={Rolewicz, S},
  journal={Studia Mathematica},
  volume={141},
  number={3},
  pages={263--272},
  year={2000},
  publisher={Polska Akademia Nauk. Instytut Matematyczny PAN}
}

@incollection{giles1999survey,
  title={A survey of Clarke’s subdifferential and the differentiability of locally Lipschitz functions},
  author={Giles, JR},
  booktitle={Progress in Optimization: Contributions from Australasia},
  pages={3--26},
  year={1999},
  publisher={Springer}
}
\end{document}